\newcommand{\ignore}[1]{}
\def\th@plain{%
  \thm@notefont{}
  \itshape 
}
\def\th@definition{%
  \thm@notefont{}
  \normalfont 
}
\newtheorem{theorem}{Theorem}[section]
\newtheorem{lemma}[theorem]{Lemma}
\newtheorem{corollary}[theorem]{Corollary}
\tiny\color{gray},
\newcommand{\RR}{\ensuremath{\mathbb R}}
\newcommand{\PP}{\ensuremath{\mathbb P}}
\newcommand{\curves}{\mathcal C}
\newcommand{\family}{\mathcal F}
\newcommand{\circs}{\Gamma}
\newcommand{\lsets}{\mathcal S}
\newcommand{\pts}{\mathcal P}
\newcommand{\vars}{\mathcal V}
\newcommand{\vb}{{\bf V}}
\newcommand{\crc}{C}
\newcommand{\planes}{\Pi}
\newcommand{\crs}{\mathrm{cr}}
\def\eps{{\varepsilon}}
\newcommand{\parag}[1]{\vspace{2mm}

\noindent{\bf #1} }
\title{Distinct distances on non-ruled surfaces and between circles\thanks{This research project was done as part of the 2019 CUNY Combinatorics REU, supported by NSF awards DMS-1802059 and DMS-1851420.}}
\author{
Surya Mathialagan\thanks{Department of Electrical Engineering and Computer Science, MIT, MA, USA.
{\sl smathi@mit.edu.} This research was completed while the author was at Caltech. Supported by Caltech's Summer Undergraduate Research Fellowships (SURF) Program and the Olga Taussky-Todd Award.}
\and
Adam Sheffer\thanks{Department of Mathematics, Baruch College, City University of New York, NY, USA.
{\sl adamsh@gmail.com}. Supported by NSF award DMS-1802059.}}
\begin{document}

\pagenumbering{arabic}

\maketitle
\begin{abstract}
We improve the current best bound for distinct distances on non-ruled algebraic surfaces in $\RR^3$. In particular, we show that $n$ points on such a surface span $\Omega\left(n^{32/39-\eps}\right)$ distinct distances, for any $\eps>0$. 
Our proof adapts the proof of Sz\'ekely for the planar case, which is based on the crossing lemma.

As part of our proof for distinct distances on surfaces, we also obtain new results for distinct distances between circles in $\RR^3$. Consider two point sets of respective sizes $m$ and $n$, such that each set lies on a distinct circle in $\RR^3$. We characterize the cases when the number of distinct distances between the two sets can be $O(m+n)$. This includes a new configuration with a small number of distances. In any other case, we prove that the number of distinct distances is $\Omega\left(\min\left\{m^{2/3}n^{2/3},m^2,n^2\right\}\right)$.
\end{abstract}

\section{Introduction}

Erd\H os introduced the family of \emph{distinct distances problems} in 1946 and considered it to be his ``most striking contribution to geometry'' \cite{Erd96}. For over 50 years, he regularly added more conjectures and problems to this family. For a finite point set $\pts\subset \RR^d$, let $\Delta(\pts)$ be the set of distances spanned by pairs of points from $\pts$. 
In his first seminal paper on the subject, Erd\H os \cite{erd46} asked for the asymptotic value of $D(\pts) =\min_{|\pts|=n} |\Delta(\pts)|$.
In other words, the problem asks for the minimum number of distances that could be spanned by $n$ points in $\RR^d$.

The answer to the above problem depends on $d$.
In $\RR^2$, Erd\H os presented a set of $n$ points that spans $\Theta(n/\sqrt{\log n})$ distinct distances and conjectured that this was asymptotically optimal.
Over the decades, a large number of works have been dedicated to this problem (such as \cite{Chung84,SolyToth01,Szek97,Tardos03}). 
Recently, Guth and Katz \cite{GK15} almost completely settled Erd\H os's conjecture, proving that $D(\pts)=\Omega(n/\log n)$ for any set $\pts\subset\RR^2$ of $n$ points. 

\parag{Distinct distances on non-ruled surfaces.} When the breakthrough of Guth and Katz first appeared in 2010, the community was optimistic about using a  similar technique to solve the distinct distances problem in every dimension. 
This turned out to be more difficult than expected, and so far no progress has been made even in $\RR^3$. 
One step towards solving the problem in $\RR^3$ may be to consider the problem on an algebraic surface.
That is, we consider the number of distinct distances spanned by $n$ points on a specific surface in $\RR^3$.

The case of $n$ points on a plane in $\RR^3$ is equivalent to the distinct distances problem in $\RR^2$.
Tao \cite{Tao11} showed that the Guth-Katz result can be extended in the following way. 

\begin{theorem} \label{th:Tao}
Let $U\subset \RR^3$ be a sphere or a hyperboloid of two sheets. 
Then any set $\pts$ of $n$ points on $U$ satisfies 
\[ D(\pts) = \Omega\left(n/\log n\right). \]
\end{theorem}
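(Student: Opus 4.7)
The plan is to adapt the Elekes--Sharir--Guth--Katz framework from $\RR^2$ to the surface $U$. Set $Q(\pts)=\#\{(a,b,c,d)\in\pts^4 : |ab|=|cd|\}$. A Cauchy--Schwarz double-counting argument gives $|\Delta(\pts)|\ge n^2(n-1)^2/Q(\pts)$, so it suffices to prove $Q(\pts)=O(n^3\log n)$, which is precisely the quadruple bound Guth and Katz established in the plane.

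To bound $Q(\pts)$, I would introduce a three-dimensional Lie group $G$ of transformations of $\RR^3$ preserving $U$ together with the Euclidean distances between pairs of points on $U$. For the sphere (say, centered at the origin) one simply takes $G=SO(3)$, which plainly has both properties. For the hyperboloid of two sheets the Euclidean isometry group of $U$ is only one-dimensional, so a detour is needed; a natural attempt is to complexify, since $x^2+y^2+z^2=r^2$ and $x^2+y^2-z^2=-r^2$ are real forms of the same complex quadric, with a common complex orthogonal symmetry group $O(3,\CC)$, and then translate the resulting complex incidence count back to $\RR^3$.

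Next I would run the Elekes--Sharir parametrization. For each ordered pair $(a,b)\in\pts^2$, define $\ell_{a,b}=\{g\in G : g(a)=b\}$: since $a,b$ lie on the $2$-surface $U$ and $\dim G=3$, this is a $1$-dimensional algebraic curve in $G$. By design, a quadruple $(a,b,c,d)$ contributes to $Q(\pts)$ exactly when $\ell_{a,c}\cap\ell_{b,d}\ne\varnothing$, so $Q(\pts)$ is dominated by the number of intersecting pairs among the $n^2$ curves $\{\ell_{a,b}\}$ inside the $3$-dimensional variety $G$. A Cayley-type rational parametrization of $G$ should turn these curves into honest lines in $\RR^3$, after which the Guth--Katz theorem on line--line incidences yields $Q(\pts)=O(n^3\log n)$.

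The main obstacle is verifying the generic-position hypothesis of Guth--Katz, namely that no more than $O(n)$ of these $n^2$ lines share a common plane or doubly ruled quadric. For the sphere this should reduce to a statement about rotations with a common fixed point or invariant axis, which is controllable via classical facts about stabilizers in $SO(3)$. For the hyperboloid the situation is trickier because one works in a complexified setting, and one has to check that degenerate line configurations in $G$ pull back to structurally rare configurations of $\pts$ on $U$. This pullback analysis, plus the bookkeeping needed to pass between the real and complex pictures, is where I expect the bulk of the technical work to lie.
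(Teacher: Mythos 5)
The paper does not prove this theorem; it quotes it from Tao's blog post \cite{Tao11}, so there is no in-paper argument to compare against. Your outline for the sphere is the approach Tao sketches: take $G=SO(3)$, which is three-dimensional and preserves both the sphere and its Euclidean chord metric; the set $\ell_{a,b}=\{g\in G : g(a)=b\}$ is a coset of a stabilizer circle, and the quadruple $(a,b,c,d)$ contributes to $Q(\pts)$ iff $\ell_{a,c}\cap\ell_{b,d}\ne\emptyset$; and under the Cayley parametrization $R=(I-A)^{-1}(I+A)$ with $A$ skew-symmetric, the condition $g(a)=b$ becomes a linear system in the three skew parameters, solvable because $|a|=|b|$ forces $(b-a)\perp(a+b)$, so each $\ell_{a,b}$ becomes a genuine line in $\RR^3$ and Guth--Katz applies. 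This is sound modulo the genericity checks you flag and some care around the rotations by $\pi$ that Cayley misses.

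Your proposed treatment of the hyperboloid of two sheets, however, has a gap that is more than bookkeeping. The substitution $(x,y,z)\mapsto(ix,iy,z)$ carries the complex sphere to the complex quadric whose real locus is the two-sheeted hyperboloid, but it carries the complexified squared distance $(x_1-x_2)^2+(y_1-y_2)^2+(z_1-z_2)^2$ to $-(x_1-x_2)^2-(y_1-y_2)^2+(z_1-z_2)^2$. In other words, the invariant that $O(3,\CC)$ preserves restricts on the real hyperboloid to the \emph{Minkowski} form, not the Euclidean one. For real points $p,q$ on $x^2+y^2-z^2=-1$, one has $|p-q|^2=\langle p-q,p-q\rangle_{\mathrm{M}}+2(p_3-q_3)^2$, where $\langle\cdot,\cdot\rangle_{\mathrm{M}}$ denotes the Minkowski form; the extra term is not Lorentz-invariant, so a boost can preserve Minkowski distance while changing Euclidean distance. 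Consequently the complexified incidence count controls repeated Minkowski (equivalently, hyperbolic) distances, and this does not translate into a bound on repeated Euclidean distances without a genuinely new idea. Note also that the present paper invokes the theorem only to exclude the sphere; the hyperboloid clause is stated but never used.
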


No known constructions of $n$ points on an algebraic surface in $\RR^3$ span a sublinear number of distinct distances (with the obvious exception of points on a plane).  
On the other hand, there are unrestricted point sets $\pts\subset\RR^3$ that satisfy $D(\pts)=\Theta(n^{2/3})$.
(It is conjectured that no set in $\RR^3$ spans an asymptotically smaller number of distinct distances.)

Sharir and Solomon \cite{ShaSo17} proved the following distinct distances result on surfaces in $\RR^3$. (The definition of an irreducible two-dimensional variety of degree $k$ can be found in Section \ref{sec:prelim}.) 

\begin{theorem}\label{th:SharirSolomon}
Let $U\subset \RR^3$ be an irreducible two-dimensional variety of degree $k$.
Let $\pts$ be a set of $n$ points on $U$. 
Then for any $\eps>0$, we have 
\[ D(\pts) = \Omega_k\left(n^{7/9-\eps}\right)\footnote{We use $O_{v_1, v_2, \dots, v_k}$ to represent the usual big-$O$ notation where the constant of proportionality depends on the variables $v_1, \dots, v_k$. We define $\Omega_{v_1, v_2, \dots, v_k}$ and $\Theta_{v_1, v_2, \dots, v_k}$ symmetrically.}. \]
\end{theorem}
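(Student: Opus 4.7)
The plan is the classical sphere-section framework for distinct distances on surfaces, combined with a point-curve incidence theorem on $U$. Set $x = D(\pts)$, and assume $U$ is neither a plane nor a sphere (those cases are handled by Guth--Katz and Theorem \ref{th:Tao}, respectively, and yield much stronger bounds). For each $p \in \pts$ and $r \in \Delta(\pts)$, let $S_{p,r}$ be the sphere of radius $r$ centered at $p$ and put $C_{p,r} = S_{p,r} \cap U$. Because $U$ is irreducible of degree $k$ and distinct from $S_{p,r}$, B\'ezout guarantees that $C_{p,r}$ is an algebraic curve on $U$ of degree at most $2k$. Let $\Gamma = \{C_{p,r}\}$, so $|\Gamma| \le nx$, and note that every ordered pair $(p,q)$ with $p \ne q$ yields an incidence $q \in C_{p,|pq|}$, whence $I(\pts,\Gamma) \ge n(n-1)$. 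For each fixed $p$, a Cauchy--Schwarz argument over the at most $x$ sphere-sections anchored at $p$ also produces
\[ \sum_{r} |C_{p,r} \cap \pts|^2 \;\ge\; \frac{(n-1)^2}{x}, \qquad \text{hence} \qquad \sum_{p,r} |C_{p,r} \cap \pts|^2 \;\ge\; \frac{n(n-1)^2}{x}. \]

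The main analytic ingredient is a Szemer\'edi--Trotter-type point-curve incidence theorem for bounded-degree algebraic curves on an irreducible two-dimensional variety of degree $k$, as developed in prior incidence work of Sharir and Solomon. Under appropriate non-degeneracy assumptions on $\Gamma$ (finite pairwise intersections, no two curves sharing a common irreducible component), such a theorem bounds the number of curves of $\Gamma$ containing at least $t$ points of $\pts$, and hence bounds the moment $\sum_\gamma |\gamma \cap \pts|^2$, in terms of $|\pts|$ and $|\Gamma|$. A dyadic decomposition of $\Gamma$ by richness, fed into this bound with $|\Gamma|\le nx$, gives an upper estimate on $\sum_{p,r}|C_{p,r}\cap \pts|^2$; matching it against the lower bound $\Omega(n^3/x)$ above produces a polynomial inequality whose balanced solution is $x = \Omega_{k,\eps}(n^{7/9-\eps})$.

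The principal obstacle is that, since $U$ is permitted to be ruled, it may carry an infinite family of lines and a rich family of circles. Incidences concentrated on such special sub-varieties cannot be absorbed into the generic Szemer\'edi--Trotter term and must be handled separately. Two structural observations make the analysis tractable. First, a line meets any sphere in at most two points, so the incidences contributed by a line $\ell \subset U$ are linear in $|\ell \cap \pts|$ and easy to control by an elementary bound. Second, if an algebraic curve $\gamma \subset U$ lies in infinitely many sphere-sections $C_{p,r}$, then $\gamma$ must be a circle whose axis of symmetry contains the centers of all such spheres; the set of possible axes, and hence the family of ``coincident'' sphere-sections, has combinatorial complexity depending only on $k$. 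Folding these estimates of the degenerate contributions into the incidence bound closes the argument. The exponent $7/9$, rather than the $32/39$ obtained elsewhere in the present paper under a non-ruled hypothesis, is precisely the cost paid for absorbing lines and circles of $U$ in the analysis when no non-ruled restriction is imposed.
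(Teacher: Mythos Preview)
This theorem is not proved in the present paper: it is quoted from Sharir and Solomon \cite{ShaSo17} as background, so there is no ``paper's own proof'' to compare against. Your sketch does describe, at a high level, the sphere-section/incidence framework that underlies the cited result, and as an outline of \emph{their} argument it is broadly faithful.

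That said, the final paragraph misstates the relationship between the $7/9$ exponent and the $32/39$ exponent of Theorem~\ref{th:SurfDD}. The gap is not ``the cost paid for absorbing lines and circles'' in an otherwise identical argument. The paper explicitly notes that the Sharir--Solomon bound is an adaptation of Chung's incidence-based approach, whereas Theorem~\ref{th:SurfDD} is obtained by a different method: Sz\'ekely's crossing-number technique, carried out on a monotone patch of $U$, combined with a separate analysis of rich perpendicular bisectors (Lemma~\ref{le:RichBisectors}). The non-ruled hypothesis in Theorem~\ref{th:SurfDD} enters because one needs $U$ to contain only $O_k(1)$ lines (Lemma~\ref{le:Ruled}) for the bisector argument, not because lines on $U$ degrade the incidence count from $32/39$ down to $7/9$. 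Indeed, the paper says that the same Sz\'ekely-plus-incidence step already gives $\Omega(n^{4/5-\eps})$ before the bisector refinement, which exceeds $7/9$. So your diagnosis of the exponent gap is incorrect.

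As a proof of Theorem~\ref{th:SharirSolomon} itself, your write-up is only a plan: you invoke an unspecified incidence theorem on $U$ and assert that the dyadic computation ``balances'' at $7/9$ without performing it, and the treatment of degenerate curve families (many $C_{p,r}$ sharing a circular component) is gestured at rather than carried out. These are genuine gaps if the goal is a self-contained proof rather than a summary of \cite{ShaSo17}.
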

In another work, Sharir and Solomon \cite{ShaSo16} replace the extra $\eps$ in the exponent with a polylogarithmic factor.
These bounds can be thought of as adaptations of Chung's bound for distinct distances in the plane \cite{Chung84}. These results reveal that the three-dimensional distinct distances problem behaves differently when restricting the point set to a surface.

We improve Theorem 1.2 for non-ruled surfaces.
A surface $U\subset\RR^3$ 
is \emph{ruled} if for every point $p\in U$ there exists a line that is contained in $U$ and incident to $p$.
For example, cylindrical and conical surfaces are ruled.  
\begin{theorem} \label{th:SurfDD}
Let $U\subset \RR^3$ be an irreducible non-ruled two-dimensional variety of degree $k$.
Let $\pts$ be a set of $n$ points on $U$. 
Then for any $\eps>0$, we have 
\[ D(\pts) = \Omega_k\left(n^{32/39-\eps}\right). \]
\end{theorem}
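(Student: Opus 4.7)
The plan is to adapt Sz\'ekely's crossing-number approach from the plane to the surface $U$. Set $t = D(\pts)$. For every $p \in \pts$ and every $\delta \in \Delta(\pts)$ we consider the curve
\[ \gamma_{p,\delta} := U \cap S(p,\delta), \]
where $S(p,\delta)$ is the sphere of radius $\delta$ centered at $p$; it is an algebraic curve on $U$ of degree at most $2k$. Each ordered pair of distinct points $q,q' \in \pts$ contributes one point-curve incidence $q' \in \gamma_{q,|qq'|}$, so the total incidence count is $I = n(n-1)$, while the number of distinct curves is at most $nt$. Converting this into a lower bound on $t$ is the whole target.

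Following Sz\'ekely, I would construct a multigraph $G$ on the vertex set $\pts$ whose edges are the arcs obtained by joining consecutive points of $\pts$ along each curve $\gamma_{p,\delta}$, realized as arcs on $U$. The number of edges of $G$ is essentially $n^2 - O(nt)$, and the number of crossings of the drawing on $U$ is bounded by the number of pairwise intersections of the curves $\gamma_{p,\delta}$. For pairs of curves that share no one-dimensional component, a B\'ezout-type estimate on $U$ yields $O(k^2)$ intersection points per pair, contributing $O(k^2(nt)^2)$ crossings in total. Combining this with the crossing lemma (with a standard correction for the bounded-multiplicity part of $G$) and with a polynomial partitioning of $U$ of slowly growing degree (which is the source of the $n^\eps$ loss), and then optimizing, should yield the desired bound $t=\Omega_k(n^{32/39-\eps})$.

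The principal obstacle is the degenerate case in which two curves $\gamma_{p_1,\delta_1}$ and $\gamma_{p_2,\delta_2}$ share a one-dimensional component. Such a component lies in $U \cap S(p_1,\delta_1) \cap S(p_2,\delta_2)$, which is contained in a circle; hence it equals a circle lying inside $U$. A non-ruled surface may contain infinitely many circles (a sphere being the canonical example), so these pairs must be treated head-on rather than dismissed as rare. I would partition the incidences into a \emph{generic} part (pairs of curves sharing no component) and a \emph{circle} part (pairs whose shared component is a circle $c \subset U$), bounding the generic part via the crossing argument above and the circle part via the companion theorem on distinct distances between points on two circles in $\RR^3$. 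The non-ruled hypothesis on $U$ is invoked to rule out the explicit small-distance configurations classified in that companion theorem, so that each relevant pair of circles in $U$ contributes $\Omega(\min\{m^{2/3}n^{2/3},m^2,n^2\})$ distinct distances, which turns out to suffice to absorb the circle contribution.

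The hardest step will be the structural analysis of circles on a non-ruled surface: verifying that the low-distance configurations from the companion theorem cannot occur for two circles both lying in a non-ruled $U$, and then combining the generic and circle contributions without loss in the exponent. The exponent $32/39$ itself emerges from the optimization balancing $I$, the curve count $nt$, the generic crossing bound, the per-cell contribution of the polynomial partition, and the circle-sum bound supplied by the companion theorem.
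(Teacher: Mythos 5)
Your high-level framework (Sz\'ekely's crossing-number method applied to level sets $U\cap S(p,\delta)$, a multigraph on $\pts$ with edges joining consecutive points along these curves, and the need for the companion two-circle theorem somewhere in the argument) does match the paper's strategy. But the way you plan to use the circle theorem, and the obstacle you identify as central, are not the ones that actually drive the proof, and the argument as you sketch it has a real gap.

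First, the place where the two-circle theorem is needed is not the crossing upper bound. Pairs of level sets that share a one-dimensional component (a circle) are a minor nuisance for the drawing: the paper simply perturbs any two edges that would be drawn along the same circular arc, so the crossing upper bound $\crs(G)=O_k(n^2t^2)$ holds regardless. The genuine bottleneck is the \emph{edge multiplicity} in the multigraph. An edge $(u,v)$ has multiplicity equal to the number of $p\in\pts$ for which $u,v$ are consecutive on some $L(p,r)$, which is governed by how many points of $\pts$ lie on the perpendicular bisector \emph{plane} of $u$ and $v$ in $\RR^3$. There is no ``standard correction'' for unbounded multiplicity; in the planar Sz\'ekely proof one bounds rich bisector \emph{lines} via Szemer\'edi--Trotter, and here one must prove a quantitative bound on rich bisector \emph{planes} (Lemma~\ref{le:RichBisectors} of the paper). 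That lemma is the technical heart of the proof, and your proposal does not address it. Its proof requires: (i) an incidence bound for plane sections of $U$ via Sharir--Zahl (which is where the $n^\eps$ loss actually originates --- not from a direct polynomial partitioning of $U$), and (ii) a structural classification of rich bisector planes $H$ by the geometry of $U\cap H(U)$, involving surfaces of revolution, a result of Raz on distinct distances on algebraic curves in $\RR^3$, and, in one case only (circles in $U\cap H(U)$ that are very rich), Theorem~\ref{th:CircDD}.

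Second, the claim that ``the non-ruled hypothesis on $U$ is invoked to rule out the explicit small-distance configurations classified in [the companion] theorem'' is false. A non-ruled surface of revolution contains infinitely many aligned circles, and can easily contain perpendicular pairs; the aligned/perpendicular configurations are not excluded by non-ruledness, and the paper must analyze them head-on (Lemma~\ref{le:AlignPerpInSurf} and the Type 3 case in the proof of Lemma~\ref{le:RichBisectors}). In the paper, non-ruledness is used for two different purposes: to bound the number of lines on $U$ by $O_k(1)$, and to exclude $U$ being a cylindrical surface when counting Type 1 (symmetry) planes. Without these structural ingredients and without a rich-bisector lemma, the decomposition you propose does not close, and the exponent $32/39$ cannot be extracted from it.
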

The first step of our proof extends Sz\'ekely's technique for distinct distances in the plane \cite{Szek97}. By combining this technique with incidence bounds for curves, we obtain the bound $\Omega(n^{4/5-\eps})$.
We further improve this bound by studying perpendicular bisectors of points on surfaces in $\RR^3$.
We prove Theorem \ref{th:SurfDD} in Section \ref{sec:DDsurf}.

While Sharir and Solomon do not state this, their proof of Theorem \ref{th:SharirSolomon} also implies that there exists a point $p \in \pts$ that spans $\Omega_k(n^{7/9 - \epsilon})$ distances with $\pts\setminus\{p\}$.
While this is also the case for Sz\'ekely's proof in $\RR^2$, our proof of Theorem \ref{th:SurfDD} does not immediately show the existence of such a point.
Some of the new components that extend Sz\'ekely's proof to $\RR^3$ do not easily extend in this direction. 

\parag{Distinct distances between two circles in $\RR^3$.} As part of the proof of Theorem \ref{th:SurfDD}, we derive a bound on the number of distinct distances between points on two circles in $\RR^3$.
We believe that this result is also intrinsically interesting, and not just with respect to Theorem \ref{th:SurfDD}.

Consider two finite sets $\pts_1,\pts_2\subset \RR^d$.
We denote by $D(\pts_1,\pts_2)$ the number of distinct distances spanned by pairs in $\pts_1\times\pts_2$. 
In other words, we ignore distances between pairs of points from the same set.
Many such bipartite distinct distances problems have been studied (for example, see \cite{Elekes95,PdZ13,SSS13}). We are interested in the case where there exist circles $C_1$ and $C_2$ such that $\pts_1\subset C_1$ and $\pts_2\subset C_2$.

\begin{figure}[ht]
    \centering
    \begin{tikzpicture}[scale=0.5]
        \draw (0, 0) circle(3.5);
        \draw (0, 0) circle(2);
        \foreach \r in {0, 45, ..., 335} {
            \draw[dashed] (\r:3.5) -- ({\r + 45}: 3.5); 
            \draw[dashed] (\r:2) -- ({\r + 45}: 2); 
            \draw[fill=red] (\r: 3.5) circle(1mm); 
            \draw[fill=blue] (\r: 2) circle(1mm);} 
    \end{tikzpicture}
\caption{The case of two concentric circles.}
\label{fi:concentric}
\vspace{-2mm}
\end{figure}
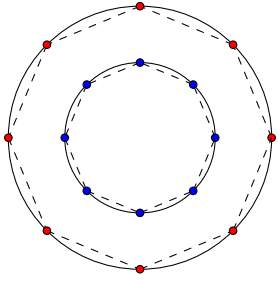

We begin with the case where $C_1$ and $C_2$ are concentric circles in $\RR^2$. Let $\pts_1$ be the set of vertices of a regular $n$-gon lying on $C_1$.
Let $\pts_2$ be a uniform scaling of $\pts_1$ around the center of the circles, such that $\pts_2\subset C_2$.
The case of $n=8$ is depicted in Figure \ref{fi:concentric}. 
Arbitrarily fix a point $p\in\pts_1$.
By symmetry, we have that \[ D(\pts_1,\pts_2)= D(\{p\},\pts_2) = \Theta(n). \]

It is not difficult to generalize the above construction to the case where $\pts_1$ and $\pts_2$ are not necessarily of the same size. 
When $|\pts_1|=m$ and $|\pts_2|=n$, we obtain $D(\pts_1,\pts_2) = \Theta(m+n)$. On the other hand,
Pach and de Zeeuw \cite{PdZ13} proved that when $C_1$ and $C_2$ are not concentric, we have
\[ D(\pts_1,\pts_2) = \Omega\left(\min\left\{m^{2/3}n^{2/3},m^2,n^2\right\}\right). \]

In the current work, we consider the case where $C_1$ and $C_2$ are in $\RR^3$.
We define the \emph{axis} of a circle $\crc$ in $\RR^3$  to be the line incident to the center of $\crc$ and orthogonal to the plane containing $\crc$.
Note that every point on the axis of $\crc$ is equidistant from all the points of $\crc$.
On the other hand, a point not on the axis of $\crc$ cannot be equidistant from three points of $\crc$ (every sphere centered at this point intersects $\crc$ in at most two points).
We say that two circles $\crc_1$ and $\crc_2$ in $\RR^3$ are \emph{aligned} if they have the same line as their axis. 
An example is depicted in Figure \ref{fig:TwoCircles}(a).
Note that the planes that contain aligned circles are parallel. 

The above example of concentric circles in $\RR^2$ can be easily extended to the case of aligned circles in $\RR^3$. Thus, when $C_1$ and $C_2$ are aligned, one can find $\pts_1$ and $\pts_2$ such that $D(\pts_1,\pts_2)=\Theta(m+n)$.
Surprisingly, we also discovered a less intuitive family of constructions with a linear number of distances between two circles.
Let $H_1$ be the plane containing $\crc_1$ and let $H_2$ be the plane containing $\crc_2$.
We say that $\crc_1$ and $\crc_2$ are \emph{perpendicular} if all the following hold:
\begin{enumerate}[label=(\arabic*),noitemsep,topsep=1pt]
\item The planes $H_1$ and $H_2$ are perpendicular (that is, the angle between the two planes is $\pi/2$).
\item The center of $\crc_1$ lies on $H_2$.
\item The center of $\crc_2$ lies on $H_1$.
\end{enumerate}
An example is depicted in Figure \ref{fig:TwoCircles}(b).

\begin{figure}[ht]
    \centering
    \begin{subfigure}[b]{0.33\textwidth}
    \centering
        \includegraphics[width=0.97\textwidth]{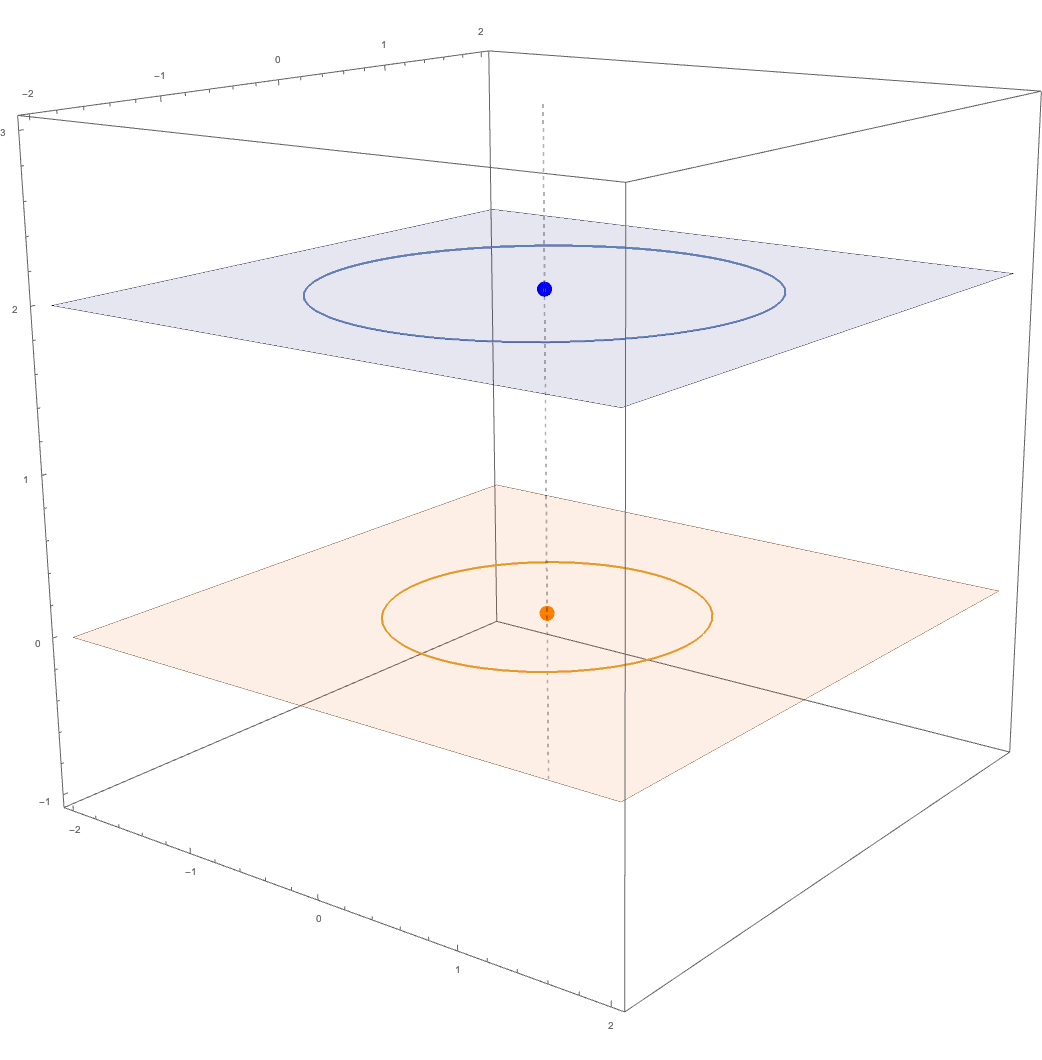}
        \caption{Aligned circles.}\label{subfig:aligned}
    \end{subfigure}
    \hspace{1cm}
    \begin{subfigure}[b]{0.33\textwidth}
        \centering
        \includegraphics[width=\textwidth]{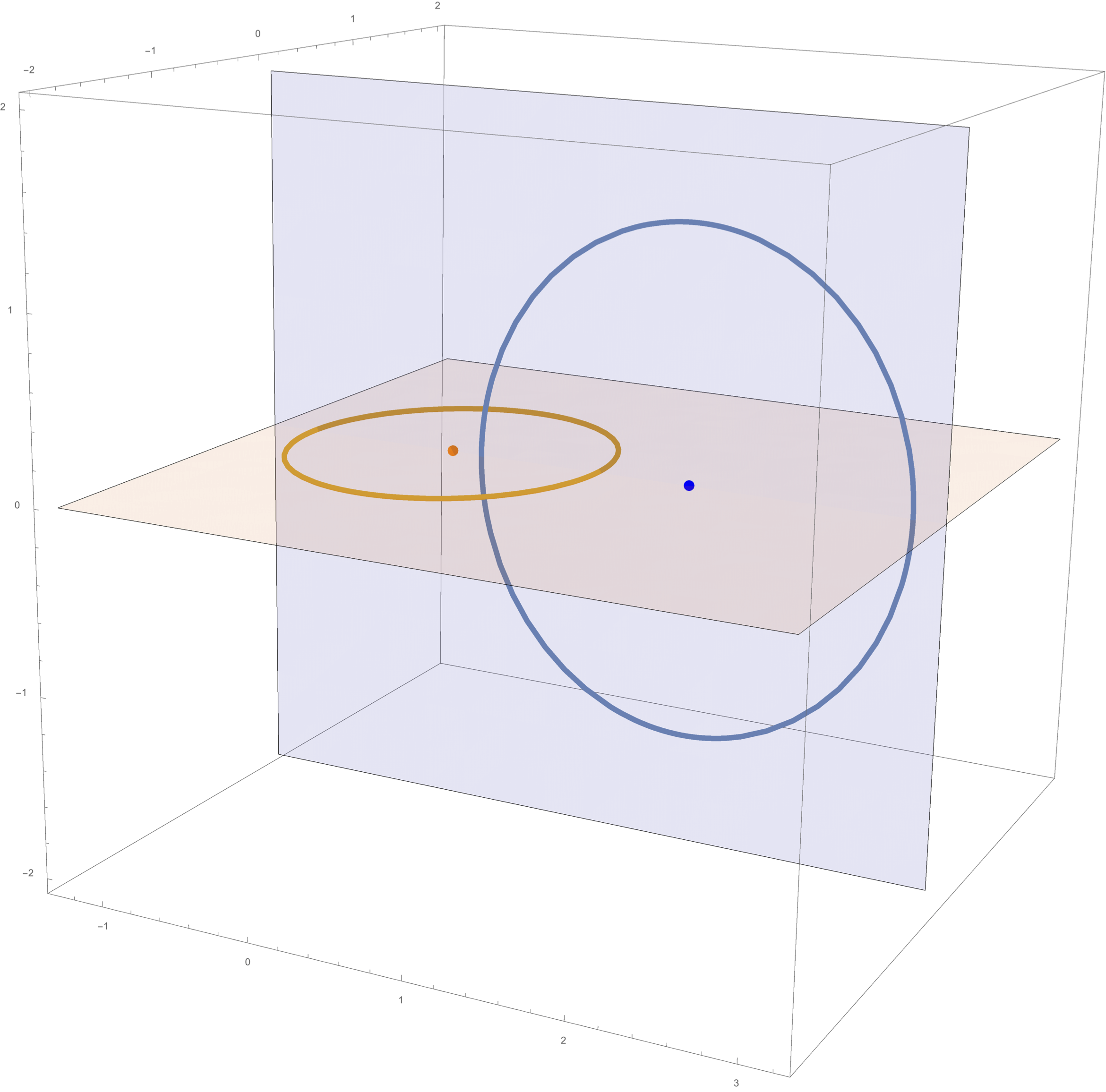}
        \caption{Perpendicular circles.}\label{subfig:perpendicular}
    \end{subfigure}
    \caption{Pairs of circles that may lead to a small number of distinct distances.}
    \label{fig:TwoCircles}
\end{figure}

The following is our main contribution to distinct distances between two circles. 

\begin{theorem} \label{th:CircDD}
Let $\crc_1$ and $\crc_2$ be two circles in $\RR^3$. \\[2mm]
(a) Assume that $\crc_1$ and $\crc_2$ are aligned or perpendicular.
Then there exist a set $\pts_1\subset \crc_1$ of $m$ points and a set $\pts_2\subset \crc_2$ of $n$ points, such that
\[ D(\pts_1,\pts_2) = \Theta(m+n). \]
(b) Assume that $\crc_1$ and $\crc_2$ are neither aligned nor perpendicular.
Let $\pts_1\subset \crc_1$ be a set of $m$ points and let $\pts_2\subset \crc_2$ be a set of $n$ points.
Then
\[ D(\pts_1,\pts_2) = \Omega\left(\min\left\{m^{2/3}n^{2/3},m^2,n^2\right\}\right). \]
\end{theorem}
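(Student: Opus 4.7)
For aligned circles, choose coordinates so the common axis is the $z$-axis, and write $\crc_i = \{(r_i\cos\alpha,r_i\sin\alpha,z_i) : \alpha\in[0,2\pi)\}$. A direct computation shows that the squared distance between a point of $\crc_1$ at angle $\alpha$ and a point of $\crc_2$ at angle $\beta$ depends on $(\alpha,\beta)$ only through $\alpha-\beta$. Placing $\pts_1$ at angles $\{2\pi i/N\}_{i=1}^m$ and $\pts_2$ at angles $\{2\pi j/N\}_{j=1}^n$ with $N=m+n$ then yields at most $m+n-1$ distinct angle-differences and hence $O(m+n)$ distances; the matching lower bound $\Omega(m+n)$ holds since no point of $\pts_i$ lies on the axis of $\crc_{3-i}$. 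For perpendicular circles, take $H_1,H_2$ as the $xy$- and $xz$-planes, so both centers lie on the $x$-axis, and parametrize by $\theta,\phi$. A direct expansion yields an identity of the form
\[ |p(\theta)-q(\phi)|^2 = c + \kappa\bigl(\cos\theta - a_1\bigr)\bigl(\cos\phi - a_2\bigr), \]
with constants $c,a_1,a_2$ and $\kappa = -2r_1r_2\neq 0$. Choosing $\cos\theta_i - a_1 = \eps\rho^i$ and $\cos\phi_j - a_2 = \eps\rho^{-j}$ (matching geometric progressions with $\eps$ small enough that the shifted values lie in $[-1,1]$) produces only $m+n-1$ distinct products, hence $\Theta(m+n)$ distances.

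\textbf{Part (b): lower bound via quadruples and incidences.} Let $D = D(\pts_1,\pts_2)$ and let $Q$ denote the set of quadruples $(p,p',q,q')\in\pts_1^2\times\pts_2^2$ with $|p-q|=|p'-q'|$. Cauchy-Schwarz gives $|Q|\geq (mn)^2/D$. To upper bound $|Q|$, parametrize $\pts_2$ by angles $\phi_1,\ldots,\phi_n$, and for each ordered pair $(p,p')\in\pts_1^2$ with $p\neq p'$ form the real algebraic curve
\[ C_{p,p'} := \{(\phi,\psi) : |p-q(\phi)|^2 = |p'-q(\psi)|^2\}, \]
which has bounded degree in $\cos\phi,\sin\phi,\cos\psi,\sin\psi$. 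Then $|Q|$, up to diagonal contributions, counts incidences between the $O(m^2)$ curves $C_{p,p'}$ and the $n^2$ grid points $(\phi_k,\phi_\ell)$. A Szemer\'edi-Trotter-type incidence bound for families of algebraic curves with two degrees of freedom yields $|Q| = O(m^{4/3}n^{4/3} + m^2 + n^2)$, and combining with the lower bound on $|Q|$ produces $D = \Omega(\min\{m^{2/3}n^{2/3},m^2,n^2\})$.

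\textbf{Main obstacle.} The crux is verifying the hypotheses of the incidence theorem, and in particular showing that no two curves $C_{p,p'}$ and $C_{r,r'}$ with $(p,p')\neq(r,r')$ share an irreducible component. A shared component, by Bezout, forces an algebraic identity between the parametrizations of $\crc_1$ and $\crc_2$. The plan is to classify all such identities and show that each one forces $\crc_1$ and $\crc_2$ to be either aligned (so that $|p(\theta)-q(\phi)|^2$ depends only on $\theta-\phi$, as in our aligned construction) or perpendicular (so that the squared distance is a bilinear function of $\cos\theta,\cos\phi$, as exploited in our perpendicular construction). Since the hypothesis of (b) rules out both configurations, the incidence bound applies and the quadruples estimate closes the proof.
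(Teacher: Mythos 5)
Your constructions in part (a) are correct and essentially match the paper's: both arguments exploit the identity that, in the perpendicular case, the squared distance is an affine function of the product $(\cos\theta - a_1)(\cos\phi - a_2)$, and then force this product to lie in a geometric progression. The paper's formula \eqref{eq:MultForm} is exactly your $c + \kappa(\cos\theta - a_1)(\cos\phi - a_2)$ identity in the parameters $b\beta^j$ and $-b\beta^k$, and you have the constants right ($\kappa = -2r_1 r_2$). The aligned case is handled by both via reduction to concentric planar circles.

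For part (b), the high-level scheme --- bound the quadruple count $|Q|$ from below by $(mn)^2/D$ via Cauchy--Schwarz, then prove $|Q| = O(m^{4/3}n^{4/3} + m^2 + n^2)$ --- is the same as the paper's. But your route to the quadruple upper bound is genuinely different, and it is where the gap lies. You propose to prove the upper bound directly by an incidence count between the $m^2$ curves $C_{p,p'}$ and $n^2$ grid points, which requires verifying that these curves lie in a bounded-degree family with two degrees of freedom and, critically, that no two distinct $C_{p,p'}$ share a one-dimensional component. You correctly identify this last verification as "the crux," but you do not do it; you only state a \emph{plan} to "classify all such identities." That classification \emph{is} the theorem. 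The paper sidesteps the need to classify by invoking the Elekes--Szab\'o dichotomy (Theorem~\ref{th:RSdZ}, due to Raz--Sharir--de~Zeeuw): either the quadruple bound holds outright, or the underlying polynomial $F(x,y,\Delta)$ has the local special form $\psi_1(x)+\psi_2(y)+\psi_3(\Delta)=0$. This reframes the degeneracy question as a concrete analytic one, which the paper resolves via the derivative test of Lemma~\ref{le:DerivTest}: the quantity $\partial^2\bigl(\log|\rho_t/\rho_s|\bigr)/\partial s\,\partial t$ must vanish identically, and ruling this out requires a nontrivial case analysis of the numerator of a large rational function in five parameters $(p,q,r,\alpha,\beta)$, carried out with Mathematica in Appendices~\ref{app:code} and~\ref{app:analysis}. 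In short: your part (b) would likely succeed if completed, since it reproves the machinery the paper imports, but as written it replaces the substantive technical core of the proof (the degeneracy analysis) with an unexecuted outline. Note also that the nonobvious \emph{perpendicular} degenerate family --- which you yourself construct in part (a) --- shows the classification is more delicate than for circles in the plane, where concentricity is the only exception; discovering and eliminating exactly this family is the new content of the paper's derivative-test computation.
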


The proof of Theorem \ref{th:CircDD}(b) relies on Elekes-Szab\'o type expanding polynomials, similar to an argument of Raz \cite{Raz20}. 
This argument is combined with a new layer of computer calculations. 
Part (a) of Theorem \ref{th:CircDD} is proved in Section \ref{sec:ProofA}.
Part (b) is proved in Section \ref{sec:ProofB}. 

\parag{Future directions.}
The main issue of Theorem \ref{th:SurfDD} is that it does not hold for ruled surfaces. It seems plausible that our proof could be extended to also hold for ruled surfaces. This could be an interesting direction for future work.

Another direction to explore is distinct distances between two circles in $\RR^4$. In this case, there exists a simple construction with only one distance between two circles (see \cite{Lenz55}). We wonder what other surprises might exist in $\RR^4$.

\parag{Acknowledgements.}
We are grateful to Frank de Zeeuw for many useful discussions, including help with Theorem \ref{th:RSdZ} and with the circle constructions. 
We thank Toby Aldape, Jingyi (Rose) Liu, Minh-Quan Vo, and the anonymous referees for helping to improve this paper. We also thank Sara Fish for inspirational support. The first author would like to thank everyone involved in the 2019 CUNY REU for motivating her with their passion and engaging with her in many helpful discussions.

\section{Preliminaries} \label{sec:prelim}
We now introduce various definitions and tools that are used in the proofs in the following sections.

We briefly survey notation and results from real algebraic geometry. 
For references and more information, see for example \cite{BCR98,Harris92}.

For polynomials $f_1,\ldots,f_k\in \RR[x_1,\ldots,x_d]$, the \emph{variety} defined by $f_1,\ldots,f_k$ is
\[ \vb(f_1,\ldots,f_k) = \left\{p\in \RR^d\ :\ f_1(p)=f_2(p) = \cdots = f_k(p)=0 \right\}. \]
We say that a set $U \subset \RR^d$ is a variety if there exist $f_1,\ldots,f_k\in \RR[x_1,\ldots,x_d]$ such that $U = \vb(f_1,\ldots,f_k)$.
While not true over some other fields, in $\RR^d$ every variety can be defined using a single polynomial. 

A variety $U$ is \emph{irreducible} if there do not exist two nonempty varieties $U_1,U_2\subset U$ such that $U=U_1 \cup U_2$, $U_1\neq U$, and $U_2\neq U$.
The \emph{dimension} of an irreducible variety $U$ is the largest integer $d_U$ for which there exist non-empty irreducible varieties $U_0,U_1,\ldots,U_{d_U-1}$ such that
\[ U_0 \subsetneq U_1 \subsetneq \ldots \subsetneq U_{d_U-1} \subsetneq U. \]
The dimension of a reducible variety $W$ is the maximum dimension of an irreducible component of $W$. 
We define a \emph{curve} to be an irreducible variety of dimension one. 
A \emph{surface} is an irreducible variety of dimension two.

The \emph{ideal} of a variety $U\subseteq \RR^d$, denoted ${\bf I}(U)$, is the set of polynomials in $\RR[x_1,\ldots,x_d]$ that vanish on every point of $U$.
We say that a set of polynomials $f_1,\ldots,f_\ell \in \RR[x_1,\ldots,x_d]$ \emph{generate} ${\bf I}(U)$ if every element of ${\bf I}(U)$ can be written as $\sum_{j=1}^\ell f_j g_j$ for some $g_1,\ldots,g_\ell \in \RR[x_1,\ldots,x_d]$.
The \emph{Jacobian matrix} of the polynomials $f_1,\ldots,f_k \in \RR[x_1,\ldots,x_d]$ is

\[ {\bf J}_{f_1,\ldots,f_k} = \left( \begin{array}{cccc}
\frac{\partial f_1}{\partial x_1} & \frac{\partial f_1}{\partial x_2} & \cdots & \frac{\partial f_1}{\partial x_d} \\[2mm]
\frac{\partial f_2}{\partial x_1} & \frac{\partial f_2}{\partial x_2} & \cdots & \frac{\partial f_2}{\partial x_d} \\[2mm]
\cdots & \cdots & \cdots & \cdots \\[2mm]
\frac{\partial f_k}{\partial x_1} & \frac{\partial f_k}{\partial x_2} & \cdots & \frac{\partial f_k}{\partial x_d} \end{array}\right)\]

Consider a variety $U\subset\RR^d$ of dimension $k$, and polynomials $f_1,\ldots,f_\ell \in \RR[x_1,\ldots,x_d]$ that generate ${\bf I}(U)$.
We say that $p\in U$ is a \emph{singular} point of $U$ if $\mathrm{rank\,} {\bf J}_{f_1,\ldots,f_k}(p) <d-k$.
A point of $U$ that is not singular is said to be a \emph{regular} point of $U$.
We denote the set of regular points of a variety $U\subset \RR^d$ as $U_\text{reg}$.

We define the \emph{degree} of a surface $U\subset \RR^3$ as the minimum degree of a polynomial $f\in\RR[x,y,z]$ that satisfies $\vb(f)=U$.
There are several non-equivalent definitions for the degree of a variety in $\RR^3$ that is not a surface. 
To avoid this issue, we say that the \emph{complexity} of a variety $U$ is the minimum integer $D$ that satisfies the following. 
There exist $k\le D$ polynomials $f_1,\ldots,f_k\in \RR[x_1,\ldots,x_d]$, each of degree at most $D$, such that $U=\vb(f_1,\ldots,f_k)$. In the past decade, the use of complexity is becoming more common. For example, see \cite{BGT11,SolyTao12}.

\begin{theorem} \label{th:comps}
Let $U\subset \RR^d$ be a variety of complexity $D$.  \\
(a) The number of irreducible components of $U$ is $O_{d,D}(1)$. \\
(b) The number of connected components of $U$ is $O_{d,D}(1)$.
\end{theorem}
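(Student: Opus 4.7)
The plan is to reduce both parts to classical results in real algebraic geometry, using the hypothesis $U = \vb(f_1,\ldots,f_k)$ with $k \le D$ and each $\deg f_i \le D$. A preliminary observation useful for part (b) is that over $\RR$ one can collapse the whole system into a single polynomial: the sum of squares $g = f_1^2 + \cdots + f_k^2$ has degree at most $2D$, and its real zero set equals exactly $U$.

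For part (b), I would invoke the Oleinik--Petrovsky--Thom--Milnor theorem, which bounds the sum of Betti numbers of a real algebraic set in $\RR^d$ cut out by a polynomial of degree at most $D'$ by a function of $d$ and $D'$ alone (a standard quantitative form gives $D'(2D'-1)^{d-1}$). Applied to the polynomial $g$ constructed above, the zeroth Betti number, which counts connected components of $U$, is bounded by a quantity depending only on $d$ and $D$. This immediately yields (b).

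For part (a), the plan is to complexify. Let $U_\CC = \{p \in \CC^d : f_1(p) = \cdots = f_k(p) = 0\}$ denote the complex zero set of the same polynomials. Classical degree bounds, proved for instance by iterating Bezout's theorem or by the bound on the degree of a complex algebraic set cut out by polynomials of bounded degree, show that $U_\CC$ has $O_{d,D}(1)$ many irreducible components over $\CC$. Every $\RR$-irreducible component $V$ of $U$ arises as the real locus of its complexification $V_\CC$, which is either a $\CC$-irreducible component of $U_\CC$ or a union $W \cup \overline{W}$ of a conjugate pair of $\CC$-irreducible components; moreover, distinct real irreducible components of $U$ give rise to distinct Galois orbits of complex components. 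Hence the number of $\RR$-irreducible components of $U$ is at most the number of $\CC$-irreducible components of $U_\CC$, which is $O_{d,D}(1)$.

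The main subtlety I expect in part (a) is the real/complex correspondence: one must verify that each real irreducible component really does come from a single conjugate orbit of complex components and that one is not overcounting components of $U_\CC$ whose real locus is empty or of lower dimension. Both issues are handled by the standard fact that complex conjugation permutes the irreducible components of a variety defined over $\RR$ and that the real locus of a Galois-stable subvariety is Zariski closed in $\RR^d$; components of $U_\CC$ with empty or strictly smaller real part simply contribute nothing (or contribute a lower-dimensional component), and in either case the final count is still bounded.
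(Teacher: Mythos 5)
The paper states Theorem~\ref{th:comps} as a standard preliminary from real algebraic geometry (citing \cite{BCR98,Harris92} at the start of Section~\ref{sec:prelim}) and gives no proof, so there is no proof of the paper's own to compare against. Your argument for part~(b) is correct and is the standard one: pass to $g=f_1^2+\cdots+f_k^2$ and apply the Oleinik--Petrovsky--Thom--Milnor bound to $b_0$ of $\vb(g)$.

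Part~(a), however, has a genuine gap. The claim ``every $\RR$-irreducible component $V$ of $U$ arises as the real locus of its complexification $V_\CC$, which is a $\CC$-irreducible component of $U_\CC$ or a conjugate pair of such, and distinct real components give distinct Galois orbits'' is false. Consider $U=\vb(g)$ with $g=\bigl(\prod_{i=0}^{m-1}(x-i)\bigr)^2+y^2\in\RR[x,y]$. Over $\RR$, the zero set is the $m$ isolated points $(0,0),(1,0),\ldots,(m-1,0)$, so $U$ has $m$ zero-dimensional $\RR$-irreducible components. Over $\CC$, $g=(p(x)+iy)(p(x)-iy)$ with $p(x)=\prod(x-i)$, so $U_\CC$ has exactly two $\CC$-irreducible components. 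Each real point complexifies to itself, a proper subvariety of $U_\CC$ contained in \emph{both} complex components; these are not $\CC$-components, not Galois orbits, and not distinct Galois orbits across the $m$ real components. Your proposed injection from $\RR$-components to Galois orbits of $\CC$-components therefore does not exist, and the bound you derive from ``number of $\CC$-components is $O_{d,D}(1)$'' does not transfer. (In this example the true count $m$ is of course still controlled by the degree, but that control is exactly what the correspondence argument was supposed to provide and fails to.) The underlying phenomenon is that isolated or lower-dimensional real loci of complex components create extra $\RR$-irreducible components that the complexification does not see. A correct route is to bound, for each $e$, the number of connected components of the semi-algebraic set of points of local real dimension exactly $e$ (a constant-complexity set, so bounded by part~(b)-type arguments), and observe that every $e$-dimensional $\RR$-irreducible component must contain at least one such connected component; alternatively one can cite a cylindrical algebraic decomposition or the stratification results in \cite{BCR98}.
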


For the following quantitative variant of Theorem \ref{th:comps}(b), see Solymosi and Tao \cite{SolyTao12} (see also Barone and Basu \cite{BarBas12}).

\begin{theorem} \label{th:SolyTao}
Let $U\subset \RR^d$ be a variety of dimension $k$ and complexity $C$. Let $f\in \RR[x_1,\ldots,x_d]$ be a polynomial of degree $D$. Then the number of connected components of $U\setminus \vb(f)$ is $O_{C,D,d,k}(D^k)$.
\end{theorem}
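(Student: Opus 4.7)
The plan is to push the problem down from $U\subset\RR^d$ to $\RR^k$ via a generic linear projection, so that the exponent in the bound reflects the dimension of $U$ and not the ambient dimension $d$. By Theorem \ref{th:comps}(a), $U$ has $O_{C,d}(1)$ irreducible components, so it suffices to prove the bound on each irreducible component separately; we may therefore assume $U$ is irreducible of dimension exactly $k$. Also, if $f$ vanishes identically on $U$ then $U\setminus\vb(f)=\emptyset$ and there is nothing to prove, so we may assume that $\vb(f)\cap U$ is a proper subvariety of $U$ of dimension less than $k$.

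After a generic linear change of coordinates in $\RR^d$ the coordinate projection $\pi:(x_1,\ldots,x_d)\mapsto(x_1,\ldots,x_k)$ restricts to a finite surjective map $\pi|_U:U\to\pi(U)\subseteq\RR^k$ whose generic fibre has cardinality at most some $\delta=O_{C,d}(1)$; this is a form of effective Noether normalization for varieties of bounded complexity. Let $B\subset\RR^k$ be a real variety containing the union of (i) the branch locus of $\pi|_{U_\text{reg}}$, (ii) the image $\pi(U\setminus U_\text{reg})$ of the singular locus, and (iii) the image $\pi(U\cap\vb(f))$. Using standard effective elimination (Jacobian minors of generators of ${\bf I}(U)$ together with resultants taken with respect to the variables $x_{k+1},\ldots,x_d$) one obtains $B\subseteq\vb(h)$ for some $h\in\RR[x_1,\ldots,x_k]$ of degree $O_{C,d}(D)$. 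The classical Oleinik--Petrovsky--Milnor--Thom bound then implies that $\RR^k\setminus\vb(h)$ has at most $O_{C,d,k}(D^k)$ connected components.

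Over each such component $W$, the restriction of $\pi$ to $\pi^{-1}(W)\cap U$ is an unramified covering of $W$ of degree at most $\delta$, since $W$ avoids the branch locus of $\pi$, the singular locus of $U$, and the image $\pi(U\cap\vb(f))$. Consequently $\pi^{-1}(W)\cap(U\setminus\vb(f))$ breaks into at most $\delta$ connected components, and every connected component of $U\setminus\vb(f)$ projects into one such $W$. Summing over all components of $\RR^k\setminus\vb(h)$ and multiplying by $\delta$ yields the claimed bound of $O_{C,D,d,k}(D^k)$.

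The delicate step will be the degree estimate for the bad locus $B$ in the middle paragraph. The singular locus of $U$ and the branch locus of $\pi$ are cut out in $\RR^d$ by vanishing of Jacobian minors of generators of ${\bf I}(U)$, which have degrees $O_C(1)$, while $U\cap\vb(f)$ carries the $D$-dependence. One must then control how these degrees behave under projection to $\RR^k$, which is where effective elimination (and its quantitative bounds on resultant degrees) enters. The whole point of projecting to $\RR^k$, rather than applying Oleinik--Petrovsky--Milnor--Thom directly in $\RR^d$, is precisely to turn the naive bound $O(D^d)$ into the sharper $O(D^k)$ by confining the $D$-dependent part of $B$ to a space of dimension $k$.
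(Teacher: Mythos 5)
The paper does not prove this statement; it is cited directly from Solymosi--Tao \cite{SolyTao12} and Barone--Basu \cite{BarBas12}, so there is no in-paper argument to compare against. Your overall plan --- reduce to irreducible $U$ of dimension $k$, take a generic finite linear projection $\pi\colon U\to\RR^k$, package the branch locus, the image of the singular locus, and $\pi(U\cap\vb(f))$ into a hypersurface $\vb(h)$ with $\deg h=O_{C,d}(D)$, apply Oleinik--Petrovsky--Milnor--Thom in $\RR^k$, and run a covering argument over each cell $W$ of $\RR^k\setminus\vb(h)$ --- has the right shape, and the central observation that only $\pi(U\cap\vb(f))$ contributes a factor of $D$ to $\deg h$ is exactly what makes the exponent come out to be $k$ rather than $d$.

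The genuine gap is the sentence ``every connected component of $U\setminus\vb(f)$ projects into one such $W$.'' This can fail: a connected component may lie entirely inside $\pi^{-1}(\vb(h))\cap U$, in which case it meets no $W$ and is invisible to the covering count. This is not an artifact of reducibility, since an irreducible real variety of dimension $k$ can have Euclidean-open subsets of strictly smaller local dimension. For the Whitney umbrella $\vb(x^2-y^2z)\subset\RR^3$, the part of the $z$-axis with $z<0$ is open in the variety, and cutting with $\vb(f)$ for a suitable $f$ yields connected components that are one-dimensional arcs lying entirely over the image of the singular locus under any generic projection to $\RR^2$. Both cited proofs close this by induction on dimension: any such ``hair'' component is contained in the semi-algebraic set of points of $U$ whose local dimension is less than $k$, and that set has complexity $O_C(1)$ independent of $D$ and dimension at most $k-1$, so the theorem at one lower dimension bounds its contribution by $O_{C,d,k}(D^{k-1})$. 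Your write-up omits this inductive step. You should also record why the generic projection is proper (choose the projection center at infinity, off the projective closure of $U$), since it is properness together with local diffeomorphism, not local diffeomorphism alone, that makes $\pi$ a genuine covering over $W$. A minor point for the record: the bound as printed in the paper, $O_{C,D,d,k}(D^k)$, is vacuous because the implied constant is allowed to depend on $D$; the intended statement, which is what your argument aims at, has the constant independent of $D$.
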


For more information about the following lemma, see for example \cite[Section 3.3]{BCR98}.

\begin{lemma} \label{le:singular}
Let $U\subset\RR^3$ be a variety of complexity $D$ and dimension $d>0$.
Then the set of singular points of $U$ is a variety of dimension at most $d-1$ and complexity $O_D(1)$. 
\end{lemma}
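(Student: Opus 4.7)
The plan is to realize the singular locus as the common zero set of a polynomial system of bounded degree and then deduce the dimension bound from a standard argument using irreducible components.

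First I would produce generators $f_1,\ldots,f_\ell$ of the ideal $\mathbf{I}(U)$ with $\ell = O_D(1)$ and each $\deg f_i = O_D(1)$. This is the only step that does not follow purely formally from the preceding excerpt: although $U$ has complexity $D$, the defining polynomials of $U$ need not generate $\mathbf{I}(U)$. However, effective Noetherian/Nullstellensatz-type results (of the kind collected in \cite{BCR98}) produce generators of $\mathbf{I}(U)$ whose number and degree are bounded solely in terms of $D$. I expect this to be the main technical obstacle, but it is a standard input in the real algebraic geometry literature the paper already relies on.

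Given such generators, the singular locus of $U$ can be written as
\[ U \cap \vb\bigl(\{M : M \text{ is a } (3-d) \times (3-d) \text{ minor of } \mathbf{J}_{f_1,\ldots,f_\ell}\}\bigr). \]
Each entry of the Jacobian has degree $O_D(1)$, so each such minor has degree $O_D(1)$, and the total number of minors is $O_D(1)$. Adjoining these to the generators $f_1,\ldots,f_\ell$ of $\mathbf{I}(U)$ yields a defining system of $O_D(1)$ polynomials, each of degree $O_D(1)$. This immediately gives the complexity bound claimed in the lemma.

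For the dimension bound, I would invoke Theorem \ref{th:comps}(a) to decompose $U = U_1 \cup \cdots \cup U_s$ with $s = O_D(1)$ and each $U_i$ irreducible of complexity $O_D(1)$ and dimension $d_i \leq d$. Every singular point of $U$ is either a singular point of some single $U_i$ or lies on a pairwise intersection $U_i \cap U_j$ with $i \neq j$. For irreducible varieties, the Jacobian criterion (e.g.\ \cite[Section 3.3]{BCR98}) says that the singular locus of $U_i$ is a proper subvariety, and thus has dimension at most $d_i - 1 \leq d - 1$. Any component $U_i$ with $d_i < d$ trivially contributes a set of dimension at most $d - 1$. Finally, the intersection $U_i \cap U_j$ of two distinct irreducible components is a proper subvariety of each, so also has dimension at most $d - 1$. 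Taking the union over the $O_D(1)$ components and pairs yields a variety of dimension at most $d - 1$, completing the argument.
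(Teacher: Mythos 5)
The paper does not actually prove Lemma~\ref{le:singular}: it cites \cite[Section 3.3]{BCR98} and treats the statement as a known fact. So your proposal is an independent argument rather than something to compare against a proof in the paper. On its own merits, your sketch follows the expected textbook route and is essentially sound. You correctly flag the one genuine gap: producing generators of ${\bf I}(U)$ whose number and degrees are bounded in terms of $D$. Having complexity $D$ only gives a defining system cutting out $U$ as a \emph{set}; the real vanishing ideal can be strictly larger, and bounding its generators requires an effective real Nullstellensatz/Positivstellensatz input. This is known, but it is the crux of the complexity claim and would need a concrete citation rather than a hand wave.

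The dimension argument is fine, with one point worth making explicit to tie it to the paper's Jacobian definition: if $p$ lies on a unique irreducible component $U_i$ of dimension $d_i\le d$ and is a regular point of $U_i$, then the Jacobian of generators of ${\bf I}(U)$ has rank $3-d_i\ge 3-d$ at $p$, so $p$ is regular for $U$ under the paper's rank criterion; hence $\mathrm{Sing}(U)\subseteq\bigcup_i\mathrm{Sing}(U_i)\cup\bigcup_{i\ne j}(U_i\cap U_j)$, and your bound $\le d-1$ on each piece follows from irreducibility as you say. One small housekeeping note: Theorem~\ref{th:comps}(a) as stated only bounds the \emph{number} of components, not their complexity, but your argument does not in fact need the latter — the variety structure and complexity of $\mathrm{Sing}(U)$ already come from the minor equations, and the decomposition is used only for the dimension estimate.
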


Let $S\subset \RR^d$. 
The \emph{Zariski closure} of $S$, denoted $\overline{S}$, is the smallest variety that contains $S$.
Specifically, every variety that contains $S$ also contains $\overline{S}$.
A set $S\subset \RR^d$ is  \emph{semi-algebraic} if there exists a boolean function $\Phi(y_1,\ldots,y_t)$ and polynomials $f_1,\ldots,f_t\in \RR[x_1,\ldots,x_d]$ such that 
\[ p\in S \qquad \text{if and only if } \qquad \Phi(f_1(p)\ge 0,\ldots,f_t(p)\ge 0)=1. \]

The dimension of $S$ is $\dim \overline{S}$.
The \emph{complexity} of $S$ is the minimum $t$ such that $S$ can be described with at most $t$ polynomials of degree at most $t$.
The projection of a real variety may not be a variety. 
However, it must be semi-algebraic. 

\begin{lemma} \label{le:projection}
Let $U \subset \RR^d$ be a variety of complexity $k$ and of dimension $d'$. Let $\pi: \RR^d \to \RR^e$ be a
standard projection: a linear map that keeps $e$ out of the $d$ coordinates of a point in $\RR^d$.
Then $\pi(U)$ is a semi-algebraic set of dimension at most $d'$ and of complexity $O_{k,d}(1)$.
\end{lemma}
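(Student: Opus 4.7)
The plan is to combine the Tarski--Seidenberg principle with a standard dimension-counting argument. Assume without loss of generality that $\pi$ retains the first $e$ coordinates, so that $\pi(x_1,\ldots,x_d) = (x_1,\ldots,x_e)$. Since $U$ has complexity $k$, I can write $U = \vb(f_1,\ldots,f_s)$ with $s \le k$ and each $f_i \in \RR[x_1,\ldots,x_d]$ of degree at most $k$. Then
\[
\pi(U) \;=\; \bigl\{(x_1,\ldots,x_e) \in \RR^e \;:\; \exists x_{e+1},\ldots,x_d \in \RR,\ \textstyle\bigwedge_{i=1}^{s} f_i(x_1,\ldots,x_d) = 0\bigr\}.
\]
This is a first-order formula in the language of ordered fields, with input data of size bounded by a function of $k$ and $d$ only.

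Next I would invoke the effective form of the Tarski--Seidenberg theorem (quantifier elimination over $\RR$, as in \cite{BCR98}). This converts the existentially quantified formula above into a quantifier-free boolean combination of polynomial equalities and inequalities in the variables $x_1,\ldots,x_e$. The number of polynomials that appear, together with their degrees, is bounded by a function of $k$ and $d$ alone. In particular, $\pi(U)$ is semi-algebraic with complexity $O_{k,d}(1)$, establishing the structural and complexity claims.

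For the dimension bound, I would decompose $U = \bigcup_{i=1}^{r} U_i$ into irreducible components; by Theorem~\ref{th:comps}(a), $r = O_{k,d}(1)$, and each $U_i$ is irreducible of dimension at most $d'$. The restriction $\pi|_{U_i}$ is a polynomial map, so its image has Zariski closure of dimension at most $\dim U_i$ (this is the classical fact that a morphism between affine varieties does not increase dimension, provable via generic fibers or by noting that a Noether-normalization-style chain of irreducible subvarieties in $\overline{\pi(U_i)}$ lifts to such a chain in $U_i$). Hence
\[
\dim \overline{\pi(U)} \;\le\; \max_{i} \dim \overline{\pi(U_i)} \;\le\; \max_{i} \dim U_i \;\le\; d',
\]
and by definition $\dim \pi(U) = \dim \overline{\pi(U)} \le d'$.

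The main obstacle is the first part: producing an explicit, effective complexity bound from quantifier elimination. Classical Tarski--Seidenberg is non-constructive about the size of the output formula, so one really needs an effective version (Collins's cylindrical algebraic decomposition, or the sharper bounds in Basu--Pollack--Roy) that guarantees the number and degrees of the output polynomials depend only on $k$ and $d$. Once that is invoked as a black box, the rest of the argument is routine.
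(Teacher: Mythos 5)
Your proposal is correct and follows essentially the same route the paper indicates: the paper does not prove this lemma in-house but points to \cite[Section 14.2]{BPR06} and explicitly frames projection as adding an existential quantifier to be eliminated, which is precisely the effective Tarski--Seidenberg argument you invoke for the semi-algebraicity and complexity bound. Your supplementary dimension argument (irreducible-component decomposition plus the fact that a morphism of affine varieties does not increase dimension) is the standard way to fill in the dimension claim and is consistent with the paper's definitions.
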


For more details about Lemma \ref{le:projection}, see for example \cite[Section 14.2]{BPR06}.
A projection is equivalent to adding an existential quantifier to some of the variables. 
The above reference discusses how to then eliminate such quantifiers.

For more information about the following lemma, see for example \cite[Section 16.4]{BPR06}.

\begin{lemma} \label{le:ConnSA}
Let $S\subset \RR^d$ be a semi-algebraic set of complexity $t$. Then the number of connected components of $S$ is $O_{d,t}(1)$.
\end{lemma}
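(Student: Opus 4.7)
The plan is to decompose $S$ by sign conditions on its defining polynomials and then invoke Theorem \ref{th:SolyTao} on each piece. Let $f_1,\ldots,f_t \in \RR[x_1,\ldots,x_d]$, each of degree at most $t$, and let $\Phi$ be a Boolean formula such that $p\in S$ iff $\Phi(f_1(p)\ge 0,\ldots,f_t(p)\ge 0)=1$. For each sign vector $\sigma \in \{-1,0,1\}^t$, define
\[ R_\sigma = \{p \in \RR^d : \mathrm{sign}(f_i(p))=\sigma_i \text{ for all } 1\le i \le t\}. \]
Whether a point $p$ lies in $S$ depends only on the sign vector of $(f_1(p),\ldots,f_t(p))$, so $S$ is a disjoint union of some collection of $R_\sigma$'s. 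Hence the number of connected components of $S$ is at most the sum, over the $3^t=O_t(1)$ choices of $\sigma$, of the number of connected components of $R_\sigma$, and it suffices to prove that each $R_\sigma$ has $O_{d,t}(1)$ connected components.

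Fix a sign vector $\sigma$, and set $I=\{i : \sigma_i = 0\}$ and $J=\{1,\ldots,t\}\setminus I$. Consider the variety $U_\sigma = \vb(\{f_i : i\in I\})$, which has complexity at most $t$ and some dimension $k_\sigma \le d$, and the polynomial $g_\sigma = \prod_{j\in J} f_j$, which has degree at most $t^2$. On any connected component of $U_\sigma \setminus \vb(g_\sigma)$, each $f_j$ with $j\in J$ is nonzero and continuous, hence has constant sign; and each $f_i$ with $i\in I$ vanishes identically. Thus the full sign vector $(\mathrm{sign}(f_i(p)))_{i=1}^t$ is constant on such a component, so $R_\sigma$ is a union of some connected components of $U_\sigma \setminus \vb(g_\sigma)$. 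Applying Theorem \ref{th:SolyTao} with $U=U_\sigma$ and $f=g_\sigma$ now bounds the number of connected components of $U_\sigma \setminus \vb(g_\sigma)$ by $O_{d,t}(t^{2k_\sigma})=O_{d,t}(1)$.

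I do not foresee any real obstacle: once the sign-condition decomposition is in place, the argument reduces to a single application of Theorem \ref{th:SolyTao} per sign vector. The only care needed is to check that the hidden constants depend only on $d$ and $t$, which is immediate since the complexity of $U_\sigma$, the degree of $g_\sigma$, and the ambient and intrinsic dimensions of $U_\sigma$ are all bounded by explicit functions of $d$ and $t$.
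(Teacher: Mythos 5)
Your proof is correct. The paper does not actually prove Lemma~\ref{le:ConnSA}; it simply cites \cite[Section 16.4]{BPR06} for it. You instead give a self-contained derivation from Theorem~\ref{th:SolyTao}, which is a clean and valid route: you stratify $S$ by the $3^t$ sign conditions $\sigma$ on the defining polynomials, observe that membership in $S$ is determined by the sign vector so $S$ is a disjoint union of realizable $R_\sigma$'s, and for each $\sigma$ you realize $R_\sigma$ as a union of connected components of $U_\sigma\setminus\vb(g_\sigma)$, where $U_\sigma$ has complexity at most $t$ and $g_\sigma$ has degree at most $t^2$. The edge cases $I=\emptyset$ (so $U_\sigma=\RR^d$, still a variety of complexity $O(1)$ and dimension $d$) and $J=\emptyset$ (so $g_\sigma\equiv 1$) both go through, and the bound $O_{d,t}(t^{2k_\sigma})$ is uniform since $k_\sigma\le d$. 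One small point you leave implicit but which holds: each connected component of $R_\sigma$ sits inside exactly one connected component $D$ of $U_\sigma\setminus\vb(g_\sigma)$, and if $D\cap R_\sigma\neq\emptyset$ then $D\subseteq R_\sigma$ by sign constancy, so the map from components of $R_\sigma$ to components of $U_\sigma\setminus\vb(g_\sigma)$ is injective. The trade-off versus the paper's citation is that your argument is more transparent and leans only on Theorem~\ref{th:SolyTao}, which the paper already imports; the BPR06 reference gives sharper quantitative bounds (polynomial in the degree and number of polynomials, with explicit exponents) that you do not need here, since $O_{d,t}(1)$ suffices.
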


\parag{Distinct distances: first bounds.} 
We state a couple of simple observations involving distinct distances in $\RR^3$.
For a point $p\in\RR^3$ and $r>0$, we denote by $S(p,r)$ the sphere of radius $r$ centered at $p$.

\begin{lemma} \label{le:DDcurve}
Let $\gamma$ be a one-dimensional variety of  complexity $O(1)$ in $\RR^3$. 
Let $\pts$ be a set of $n$ points on $\gamma$.
Then $D(\pts)=\Omega(n)$.
\end{lemma}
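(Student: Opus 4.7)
The plan is to show that even a single point $p \in \pts$ already spans $\Omega(n)$ distinct distances with the rest of $\pts$; this immediately yields $D(\pts) = \Omega(n)$.

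First, I would reduce to the case where $\gamma$ is irreducible. By Theorem \ref{th:comps}(a), a variety of complexity $O(1)$ in $\RR^3$ has $O(1)$ irreducible components, each itself a one-dimensional variety of complexity $O(1)$. A pigeonhole argument places $\Omega(n)$ points of $\pts$ on a single irreducible component, reducing to the irreducible case at the cost of only a constant factor.

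Next, I would fix any $p \in \pts$ and bound the number of points of $\pts$ at any prescribed distance $r > 0$ from $p$. These points lie on $\gamma \cap S(p,r)$, which is a variety of complexity $O(1)$, since both $\gamma$ and $S(p,r)$ are. Because $\gamma$ is irreducible of dimension one, the subvariety $\gamma \cap S(p,r)$ of $\gamma$ is either all of $\gamma$ or of dimension strictly smaller than one, hence zero-dimensional. In the former case every point of $\gamma$ lies at distance $r$ from $p$; since $p \in \pts \subset \gamma$ itself lies on $\gamma$, this forces $r = 0$, a contradiction. Hence $\gamma \cap S(p,r)$ is zero-dimensional of complexity $O(1)$, and Theorem \ref{th:comps}(b) gives that it has $O(1)$ connected components, which in dimension zero is the same as $O(1)$ points.

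Putting this together, $p$ is paired with at most $O(1)$ points of $\pts \setminus \{p\}$ at any single distance, so the remaining $n-1$ points contribute at least $(n-1)/O(1) = \Omega(n)$ distinct distances. The main conceptual obstacle would normally be the degenerate case where $\gamma$ is contained in a sphere centered at $p$, which would allow a single distance to be spanned by arbitrarily many pairs; but since we are guaranteed $p \in \gamma$, this degenerate case is ruled out essentially for free, and what remains is purely algebraic bookkeeping via Theorem \ref{th:comps}.
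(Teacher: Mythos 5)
Your proposal is correct and follows essentially the same approach as the paper: pigeonhole onto an irreducible component, then bound $|\gamma \cap S(p,r)|$ by $O(1)$ via irreducibility and Theorem~\ref{th:comps}, then count distances from a single point. Your explicit observation that $p \in \gamma$ rules out $\gamma \subset S(p,r)$ for $r>0$ is a slightly more careful justification of a step the paper states without comment, but the argument is the same.
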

\begin{proof}
By Theorem \ref{th:comps}, $\gamma$ can be partitioned to $O(1)$ irreducible components. 
By the pigeonhole principle, there exists an irreducible one-dimensional component $\gamma'$ of $\gamma$ that contains $\Theta(n)$ points of $\pts$. 
Set $\pts' = \pts \cap \gamma'$.

Let $p$ be an arbitrary point of $\pts'$.
Fix $r>0$, and note that the sphere $S(p,r)$ cannot contain $\gamma'$.
Since $\gamma'$ is irreducible, the intersection $S(p,r)\cap \gamma'$ is of dimension zero (or empty). 
By Theorem \ref{th:comps}, this intersection consists of $O(1)$ points. 
That is, the number of points of $\pts'$ at distance $r$ from $p$ is $O(1)$. 
This implies that 
\[ D(\pts) \ge D(\pts') \ge D(\{p\},\pts'\setminus \{p\})=\Omega(n). \]
\end{proof}

Given two points $a,b\in \RR^d$, we denote the distance between $a$ and $b$ as $|ab|$.
\begin{lemma}\label{le:parallelPlanes}
Let $H_1$ be the $xy$-plane in $\RR^3$ and let $H_2$ be a plane parallel to $H_1$ (possibly $H_1=H_2$).
Let $\pts_1\subset H_1$ and $\pts_2\subset H_2$ be finite point sets.
Let $\pi:\RR^3 \to H_1$ be the projection obtained by setting the $z$-coordinate to zero.
Then 
\[ D(\pts_1,\pts_2) = D(\pts_1,\pi(\pts_2)). \]
\end{lemma}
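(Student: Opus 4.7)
The plan is to exploit the fact that the $z$-coordinate of every point in $\pts_2$ is the same constant, say $h$ (the height of $H_2$ above $H_1$). Consequently, for any $p_1 = (x_1, y_1, 0) \in \pts_1$ and $p_2 = (x_2, y_2, h) \in \pts_2$, the squared distance decomposes as
\[ |p_1 p_2|^2 = (x_1 - x_2)^2 + (y_1 - y_2)^2 + h^2 = |p_1\, \pi(p_2)|^2 + h^2, \]
since $\pi(p_2) = (x_2, y_2, 0)$ lies in $H_1$ together with $p_1$. Thus every pairwise distance between $\pts_1$ and $\pts_2$ is a function, specifically $d \mapsto \sqrt{d^2 + h^2}$, of the corresponding pairwise distance between $\pts_1$ and $\pi(\pts_2)$.

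Next I would observe that the map $\varphi:[0,\infty) \to [h,\infty)$ given by $\varphi(d) = \sqrt{d^2 + h^2}$ is strictly increasing and hence injective. Therefore two pairs $(p_1, p_2)$ and $(p_1', p_2')$ from $\pts_1 \times \pts_2$ satisfy $|p_1 p_2| = |p_1' p_2'|$ if and only if the corresponding projected pairs satisfy $|p_1\, \pi(p_2)| = |p_1'\, \pi(p_2')|$. This gives a bijection between $\Delta(\pts_1,\pts_2)$ (the set of distances spanned by $\pts_1 \times \pts_2$) and $\Delta(\pts_1,\pi(\pts_2))$, namely $t \mapsto \sqrt{t^2 + h^2}$, so the two sets have equal cardinality and the lemma follows.

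There is essentially no obstacle here: the argument is a direct Pythagorean calculation combined with the monotonicity of $\varphi$. The only mild subtlety to mention is that $\pi$ restricted to $H_2$ is itself a bijection (it is just the translation by $-h$ in the $z$-direction), so $|\pi(\pts_2)| = |\pts_2|$ and no collapsing occurs on the projected side; the bookkeeping of distances on $\pts_1 \times \pts_2$ versus $\pts_1 \times \pi(\pts_2)$ is therefore consistent.
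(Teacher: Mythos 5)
Your proof is correct and follows essentially the same approach as the paper: both decompose the squared distance via the Pythagorean theorem and note that $|p_1p_2|^2 = |p_1\,\pi(p_2)|^2 + h^2$, so that equality of distances is preserved in both directions. Your explicit mention of the strictly increasing map $d \mapsto \sqrt{d^2+h^2}$ makes the bijection between the two distance sets a bit more transparent than the paper's terser phrasing, but the substance is identical.
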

\begin{proof}
The plane $H_2$ is defined by an equation of the form $z=c$.
The distance between points $(a_x,a_y,0)\in H_1$ and $(b_x,b_y,c)\in H_2$ is
\[ \sqrt{(a_x-b_x)^2+(a_y-b_y)^2+c^2}.\]
The distance between $(a_x,a_y,0)\in H_1$ and $\pi(b_x,b_y,c) = (b_x,b_y,0)$ is
\[ \sqrt{(a_x-b_x)^2+(a_y-b_y)^2}.\]

Let $a,a'\in \pts_1$ and $b,b'\in \pts_2$.
By the above, we note that $|ab|=|a'b'|$ if and only if $|a\pi(b)|=|a\pi(b')|$.
Indeed, the square of both distances changes by $c^2$.
We conclude that  $D(\pts_1,\pts_2) = D(\pts_1,\pi(\pts_2))$.
\end{proof}

\section{Distinct distances on non-ruled surfaces} \label{sec:DDsurf}

In this section we  prove Theorem \ref{th:SurfDD}.
We first present some additional preliminaries in Section \ref{ssec:AdditionalPrelim}. In Section \ref{ssec:Surfaces}, we study some properties of surfaces in $\RR^3$.
Finally, we prove Theorem \ref{th:SurfDD} in Sections \ref{ssec:SurfDDproof} and \ref{ssec:RichEdgesLemma}.

\subsection{Additional preliminaries} \label{ssec:AdditionalPrelim}

\parag{Incidences.}
Let $\pts$ be a set of points and let $\vars$ be a set of varieties, both in $\RR^d$.
An incidence is a pair $(p,U)\in\pts\times \vars$ such that the point $p$ lies on the variety $U$. 
We denote the number of incidences in $\pts\times \vars$ as $I(\pts,\vars)$.

A polynomial in $\RR[x,y]$ of degree at most $k$ has at most $\binom{k+2}{2}$ monomials. 
We can thus define every such polynomial by a set of $\binom{k+2}{2}$ real coefficients.
This leads to a bijection between the set of such polynomials and $\RR^{\binom{k+2}{2}}$.
Since we are only interested in the zero sets of the polynomials, we think of two polynomials that differ by a constant factor as identical. 
Therefore, we can think of the space of polynomials as the projective space $\PP\RR^{\binom{k+2}{2}}$.

A family of curves in $\RR^2$ is $s$\emph{-dimensional} if it corresponds to a variety $\family \subseteq \PP\RR^{\binom{D+2}{2}}$ of a constant complexity and dimension $s$.
For example, the set of circles in $\RR^2$ is a 3-dimensional family and the set of circles of radius 1 is a 2-dimensional family.
The following is a result of Sharir and Zahl \cite{SZ17}.
\begin{theorem} \label{th:SharirZahl}
Let $\pts$ be a set of $m$ points and let $\curves$ be a set of $n$ algebraic curves from an $s$-dimensional family, each of degree at most $k$. 
Assume that no two curves of $\curves$  share a common one-dimensional component.
Then for every $\eps>0$ we have
\[ I(\pts,\curves) = O_{k,s,\eps}\left(m^{\frac{2s}{5s-4}+\eps}n^{\frac{5s-6}{5s-4}}+m^{2/3}n^{2/3}+m+n\right).\]
\end{theorem}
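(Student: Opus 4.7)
The plan is to prove this by polynomial partitioning, adapted to exploit the $s$-dimensional structure of the curve family, together with an inductive argument on the number of points. The classical Pach--Sharir route already gives the $m^{2/3}n^{2/3}+m+n$ term with no dimension dependence, so the goal is to extract the improved exponents $\frac{2s}{5s-4}$ and $\frac{5s-6}{5s-4}$ from a sharper analysis of curves that get trapped on the partitioning variety.

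First I would fix a parameter $D$ (a small power of $m$ and $n$, to be optimized) and invoke Guth--Katz polynomial partitioning on $\pts$: there exists $f\in\RR[x,y]$ of degree $D$ such that $\RR^{2}\setminus \vb(f)$ has $O(D^{2})$ connected cells, each containing at most $O(m/D^{2})$ points of $\pts$. Set $\pts_{0}=\pts\cap \vb(f)$ and $\pts'=\pts\setminus\pts_{0}$, and split $\curves=\curves_{0}\sqcup\curves_{1}$ where $\curves_{0}$ consists of the curves of $\curves$ entirely contained in $\vb(f)$. Then $I(\pts,\curves)$ decomposes as $I(\pts',\curves_{1})+I(\pts_{0},\curves_{1})+I(\pts_{0},\curves_{0})$, since off-partition points meet no on-partition curves.

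The middle term is controlled by B\'ezout: each curve in $\curves_{1}$ meets $\vb(f)$ in at most $Dk$ points, so $I(\pts_{0},\curves_{1})=O(Dkn)$. For $I(\pts',\curves_{1})$ each curve in $\curves_{1}$ enters at most $O(Dk)$ cells; applying a trivial (or Pach--Sharir) bound in each cell and summing reproduces the $m^{2/3}n^{2/3}+m+n$ summand once $D$ is balanced. The new ingredient is $I(\pts_{0},\curves_{0})$: the hypothesis that no two curves share a one-dimensional component forces each curve in $\curves_{0}$ to be an irreducible component of $\vb(f)$, and parameterizing these inside $\family\subseteq \PP\RR^{\binom{k+2}{2}}$ realizes $\curves_{0}$ as the set of $\RR$-points of a proper subvariety of $\family$ cut out by linear conditions coming from the coefficients of $f$. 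Using $\dim\family = s$ and the degree of $\vb(f)$, one obtains an $s$-sensitive bound on $|\curves_{0}|$, after which I would recurse on $(\pts_{0},\curves_{0})$ — either by iterating the partitioning or by moving to a one-dimensional incidence bound on $\vb(f)$ via Lemma \ref{le:DDcurve}-style arguments.

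The hard part is the bookkeeping in the recursion: one must pick $D$ so that the saving from moving $I(\pts_{0},\curves_{0})$ to a lower-dimensional subfamily exactly matches the loss in the cell bound, and then solve the resulting recurrence uniformly in $m$ and $n$. Balancing produces the exponents $\tfrac{2s}{5s-4}$ and $\tfrac{5s-6}{5s-4}$ (the identity $2\cdot\tfrac{2s}{5s-4}+\tfrac{5s-6}{5s-4}=\tfrac{9s-6}{5s-4}$ is what makes the cell sum close), and the inevitable $n^{\eps}$ loss comes from the $O(\log n)$-depth induction needed to push the bound through uniformly in all ranges of $m$ versus $n$.
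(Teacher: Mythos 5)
This theorem is not proved in the paper at all: it is cited verbatim as a result of Sharir and Zahl, so there is no ``paper's proof'' to compare against. What can be compared is your proposal against the actual Sharir--Zahl argument, and the two are fundamentally different.

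Your route is a direct polynomial partitioning on $\pts$ in $\RR^{2}$. That machinery, pushed through as you describe, is essentially the modern proof of the Pach--Sharir bound and yields $I(\pts,\curves)=O_{s,k}\left(m^{s/(2s-1)}n^{(2s-2)/(2s-1)}+m+n\right)$ for curves with $s$ degrees of freedom. This is strictly weaker than the claimed bound for every $s\ge 3$ (for instance at $s=3$, $m=n$ one gets $n^{7/5}$ versus $n^{15/11}$), so the approach as sketched cannot recover the Sharir--Zahl exponents no matter how $D$ is tuned. The improvement in Sharir--Zahl comes from an entirely separate mechanism: they first prove a cutting lemma bounding the number of arcs needed to decompose the $n$ curves into pseudo-segments (arcs pairwise crossing at most once), where the bound on cuts depends on $s$; that cutting lemma is proved by a polynomial partitioning argument in a \emph{lifted parameter space} encoding the $s$-dimensional family, not in $\RR^{2}$. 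Once the curves are pseudo-segments, a Sz\'ekely-type crossing-number argument supplies the $m^{2/3}n^{2/3}+m+n$ term, and balancing the cutting cost against the cell decomposition gives the $m^{\frac{2s}{5s-4}+\eps}n^{\frac{5s-6}{5s-4}}$ term.

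Concretely, the step in your plan where the $s$-dependence is supposed to enter is the one that fails. You propose to get an $s$-sensitive bound on $\curves_{0}$, the curves lying in $\vb(f)$. But every such curve is an irreducible component of $\vb(f)$, so $|\curves_{0}|=O(D)$ by a degree count, with no dependence on $s$ whatsoever; and realizing $\curves_{0}$ as points of $\family$ inside $\PP\RR^{\binom{k+2}{2}}$ does not impose ``linear conditions'' that cut the dimension by an $s$-dependent amount. The dimension of the family genuinely has to interact with the \emph{pairs} of curves (through the lens count), which is why Sharir--Zahl work in a parameter space whose points index curve pairs rather than partitioning the ambient plane. As a smaller issue, your claim that Pach--Sharir ``already gives the $m^{2/3}n^{2/3}+m+n$ term with no dimension dependence'' is also inaccurate: for $s$-parameter families Pach--Sharir gives the $m^{s/(2s-1)}n^{(2s-2)/(2s-1)}$ term, and the $m^{2/3}n^{2/3}$ term in the statement is the pseudo-segment Szemer\'edi--Trotter term that becomes available only after the cutting step.
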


Let $\pts$ be a set of points in $\RR^2$ and let $r\ge 2$ be an integer. 
We say that a variety $\gamma\subset \RR^2$ is $r$-\emph{rich} if it contains at least $r$ points of $\pts$. 
The following can be easily obtained from Theorem \ref{th:SharirZahl} using a standard incidence argument. 
\begin{corollary} \label{co:SharirZahlDual}
Let $\pts$ be a set of $m$ points and let $r$ be larger than some sufficiently large constant. 
Consider an $s$-dimensional family $\family$ of algebraic curves of degree at most $k$ in $\RR^2$.
For every $\eps>0$, the maximum number of $r$-rich curves in the family with no two sharing a one-dimensional component is
\[ O_{k,s,\eps}\left(\frac{m^{s+\eps}}{r^{(5s-4)/2}}+\frac{m^2}{r^3}+\frac{m}{r}\right). \]
\end{corollary}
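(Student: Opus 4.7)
The plan is the standard ``incidence-to-rich-varieties'' dualization of Theorem~\ref{th:SharirZahl}. Let $\curves \subseteq \family$ be any set of $r$-rich curves of degree at most $k$, no two sharing a one-dimensional component, and set $N = |\curves|$. Since each curve in $\curves$ contributes at least $r$ incidences with $\pts$, I would write
\[
rN \;\le\; I(\pts,\curves) \;=\; O_{k,s,\eps}\!\left(m^{\frac{2s}{5s-4}+\eps} N^{\frac{5s-6}{5s-4}} + m^{2/3} N^{2/3} + m + N\right),
\]
where the right-hand side is Theorem~\ref{th:SharirZahl} applied to $\pts$ and $\curves$.

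The next step is to solve this inequality for $N$ by considering which term on the right dominates. Each of the first three cases gives a clean power bound: the first term yields $N = O(m^{s+\eps'}/r^{(5s-4)/2})$ after isolating $N^{2/(5s-4)}$ and raising to the power $(5s-4)/2$; the second term yields $N = O(m^2/r^3)$ after isolating $N^{1/3}$ and cubing; and the third term yields $N = O(m/r)$ directly. Taking the worst case gives exactly the bound in the statement.

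The only mildly delicate point is the ``$+N$'' term from Theorem~\ref{th:SharirZahl}: if this term dominates, then $rN \le C\cdot N$ for some constant $C = C(k,s,\eps)$, forcing $r \le C$. Since the hypothesis is that $r$ is larger than a sufficiently large constant, we can choose that threshold to exceed $C$, and hence this case cannot occur. There is no genuine obstacle here beyond bookkeeping with the $\eps$ in the exponent (which is why the statement uses an arbitrary $\eps$ rather than a specific one), and the degeneracy assumption ``no two curves share a one-dimensional component'' transfers directly from $\curves \subseteq \family$ to the application of Theorem~\ref{th:SharirZahl}.
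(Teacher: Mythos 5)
Your proof is correct and is precisely the ``standard incidence argument'' the paper alludes to (the paper does not actually write out a proof of Corollary~\ref{co:SharirZahlDual}, stating only that it follows easily from Theorem~\ref{th:SharirZahl}). The case analysis on the dominant term, the handling of the $+N$ term via the assumption that $r$ exceeds a sufficiently large constant, and the remark that the $\eps$ in the exponent absorbs the factor $(5s-4)/2$ arising from the exponentiation are all exactly the intended bookkeeping.
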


\parag{Monotone patches.}
Let $U\subset \RR^3$ be a surface of degree $D$. We define a \emph{monotone patch} of $U$ to be a connected two-dimensional open\footnote{Whenever we refer to open sets, we mean open according to the Euclidean topology (rather than the Zariski toplogy).} semi-algebraic subset of $U_\text{reg}$ whose projection on the $xy$-plane is injective. In other words, every line parallel to the $z$-axis intersects the patch at most once.

\begin{theorem} \label{th:MonPatches}
Let $U\subset \RR^3$ be a surface of degree $D$ that does not contain lines parallel to the $z$-axis. Then $U$ can be partitioned into a variety $U_0$ of dimension at most one and a set of monotone patches $U_1,\ldots,U_M$, with the following properties. The sets $U_0,U_1,\ldots,U_M$ are pairwise disjoint and their union is $U$. The complexity of $U_0$ is $O_D(1)$ and the number of patches is $M=O_D(1)$.
\end{theorem}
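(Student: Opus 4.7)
My plan is to cut $U$ along the critical curve seen from the $z$-direction and to further refine the induced planar picture so that each base cell is simply connected, then lift the decomposition upward via the projection $\pi \colon \RR^3 \to \RR^2$ that drops the $z$-coordinate. I take $f \in \RR[x,y,z]$ of minimum degree with $\vb(f) = U$, so $\deg f \le D$, and write $g := \partial f/\partial z$. The hypothesis that $U$ contains no line parallel to the $z$-axis forces $g \not\equiv 0$ on $U$, for otherwise $f$ would be independent of $z$ and $U$ would be a cylinder over a plane curve, containing vertical lines. Consequently $\Sigma := \vb(f,g)$ has dimension at most one, and by Lemma~\ref{le:singular} so does $U_\text{sing}$; both have complexity $O_D(1)$. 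Let $c(x,y)$ denote the leading coefficient of $f$ in $z$, which is a nonzero polynomial for the same reason, and set $\Sigma' := \Sigma \cup U_\text{sing}$.

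By Lemma~\ref{le:projection}, $\pi(\Sigma')$ is semi-algebraic of dimension at most one and complexity $O_D(1)$, so its Zariski closure together with $\vb(c)$ forms a variety $Y_0 \subset \RR^2$ of dimension at most one and complexity $O_D(1)$. I then apply a cylindrical algebraic decomposition of $\RR^2$ adapted to $Y_0$: this adjoins $O_D(1)$ auxiliary vertical lines $\ell_1, \ldots, \ell_r$ so that, writing $Y := Y_0 \cup \ell_1 \cup \cdots \cup \ell_r$, each two-dimensional cell $W_1, \ldots, W_s$ of $\RR^2 \setminus Y$ is homeomorphic to an open rectangle and hence simply connected, with $s = O_D(1)$. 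The set $Y$ remains a variety of dimension one and complexity $O_D(1)$.

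For each cell $W_i$ I form $V_i := U \cap \pi^{-1}(W_i)$. Every point of $V_i$ lies off $\Sigma'$, so it is a regular point of $U$ at which $g \neq 0$; the implicit function theorem makes $\pi|_{V_i}$ a local diffeomorphism. Since $W_i$ avoids $\vb(c)$, the fibre $\pi^{-1}(x,y) \cap U$ consists of simple real roots of $f(x,y,\cdot)$, and its cardinality is locally constant, so by connectedness of $W_i$ it equals a global constant $k_i \le D$. Thus $\pi|_{V_i}\colon V_i \to W_i$ is a $k_i$-sheeted covering over a simply connected base, hence trivial; the $k_i$ connected components of $V_i$ are open two-dimensional semi-algebraic subsets of $U_\text{reg}$ that project homeomorphically (in particular injectively) onto $W_i$, and these will be my monotone patches.

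Finally I set $U_0 := (U \cap \pi^{-1}(Y)) \cup U_\text{sing}$. Because $U$ contains no vertical line, the fibres of $\pi|_U$ over $Y$ are finite, so $\pi^{-1}(Y) \cap U$ has dimension at most $\dim Y \le 1$ and complexity $O_D(1)$; adjoining $U_\text{sing}$ preserves these bounds, and $U_0$ is a variety as required. Together $U_0$ and the patches partition $U$, with total patch count $\sum_i k_i \le sD = O_D(1)$. The main obstacle, anticipated above, is forcing \emph{injectivity} of $\pi$ on each patch rather than merely local injectivity: connected components of $\RR^2 \setminus Y_0$ can have non-trivial fundamental group (the exterior of an oval is the canonical example), letting monodromy of the covering permute the sheets. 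The cylindrical decomposition circumvents this by slicing every base region into simply connected rectangles, trivialising each covering so that each sheet is literally a graph over its base.
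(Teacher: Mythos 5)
Your argument is a correct, worked-out version of the cylindrical algebraic decomposition that the paper invokes with a one-line citation to \cite[Section 5.1]{BPR06}, so the route is essentially the same; you have simply filled in the details that the paper delegates to the reference. One step is stated too loosely. You conclude that $g=\partial f/\partial z$ cannot vanish identically on $U$ ``for otherwise $f$ would be independent of $z$.'' That inference conflates $g$ vanishing on the \emph{set} $U$ with $g$ being the zero \emph{polynomial}: a priori $g$ could be a nonzero polynomial that happens to vanish on all of $U$, in which case $\Sigma=\vb(f,g)$ would equal $U$ and your dimension bound would fail. To pass from ``$g\equiv 0$ on $U$'' to ``$g$ is the zero polynomial'' you need the standard real-algebraic-geometry fact that, for an irreducible hypersurface $U\subset\RR^3$, a minimum-degree defining polynomial $f$ is irreducible and generates the ideal $\mathbf{I}(U)$ (see \cite{BCR98}); then $g\in\mathbf{I}(U)=(f)$ together with $\deg g<\deg f$ forces $g\equiv 0$, and your conclusion follows. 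With that fact supplied, the rest of your argument --- deleting $\Sigma'$ and $\vb(c)$, projecting, refining to simply connected base cells, recognising each $\pi|_{V_i}$ as a finite covering that trivialises over a simply connected base, and absorbing the remainder into the at-most-one-dimensional variety $U_0$ --- is sound and gives exactly the partition claimed in the theorem, with all complexity and cardinality bounds $O_D(1)$.
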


Theorem \ref{th:MonPatches} is a special case of \emph{cylindrical decomposition}. Thus, the proof follows from the proof of the cylindrical decomposition theorem For example, see \cite[Section 5.1]{BPR06}. 

\parag{Crossing numbers.} The \emph{crossing number} of a graph $G=(V,E)$, denoted $\crs(G)$, is the smallest integer $k$ such that we can draw $G$ in the plane with $k$ edge crossings. 
The well-known crossing lemma \cite{ACNS82,Lei83} provides a lower bound for the crossing number of graphs with many edges.

\begin{lemma}[The crossing lemma] \label{le:Cross}
Let $G=(V,E)$ be a graph with $|E|\ge 4|V|$. Then $\crs(G) = \Omega\left(|E|^3/|V|^2\right)$.
\end{lemma}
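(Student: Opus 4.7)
The plan is to use the standard two-step probabilistic proof of the crossing lemma. First I would establish a weaker \emph{linear} bound: for every graph $G=(V,E)$ drawn in the plane, $\crs(G)\ge |E|-3|V|$. This follows from Euler's formula, which implies that a simple planar graph on $v\ge 3$ vertices has at most $3v-6$ edges. If $G$ is drawn with $\crs(G)$ crossings, then deleting one edge from each crossing yields a planar graph, so $|E|-\crs(G) \le 3|V|-6 < 3|V|$, giving the claimed inequality.

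Next I would bootstrap this linear bound by a random sampling argument. Fix an optimal drawing of $G$ realizing $\crs(G)$ crossings. For a parameter $p\in(0,1]$ to be chosen later, form a random induced subgraph $G'=(V',E')$ by keeping each vertex of $V$ independently with probability $p$, and let $X'$ denote the number of crossings in the induced drawing. Since each vertex is included independently, and each crossing involves exactly four vertices,
\[
\EE[|V'|]=p|V|,\qquad \EE[|E'|]=p^2|E|,\qquad \EE[X']\le p^4\,\crs(G).
\]
Applying the linear bound $X'\ge |E'|-3|V'|$ to every outcome and taking expectations yields
\[
p^4\,\crs(G)\ \ge\ p^2|E|-3p|V|,
\]
that is, $\crs(G)\ge |E|/p^2 - 3|V|/p^3$.

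Finally I would optimize $p$. Setting $p = 4|V|/|E|$, which lies in $(0,1]$ precisely because of the hypothesis $|E|\ge 4|V|$, turns the inequality into $\crs(G)=\Omega(|E|^3/|V|^2)$, as desired. There is no real obstacle here: the only nontrivial ingredient is Euler's formula for the base case, and everything else is a one-line application of linearity of expectation followed by a routine calculus-free optimization.
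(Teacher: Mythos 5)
Your proof is correct and is the standard probabilistic ("deletion method") argument for the crossing lemma. The paper does not reprove this result but simply cites it to Ajtai--Chv\'atal--Newborn--Szemer\'edi and Leighton, which is precisely the argument you reproduce, so there is nothing to compare beyond noting that you have faithfully reconstructed the classical proof that those references contain.
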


Lemma \ref{le:Cross} assumes no parallel edges. 
The following is a crossing lemma for graphs with parallel edges (for example, see \cite{Szek97}).
The \emph{multiplicity} of an edge $e$ is the number of edges that have the same endpoints as $e$ (including $e$ itself).

\begin{lemma} \label{le:CrossMult}
Let $G = (V,E)$ be a multigraph with $|E| \ge 5\mu |V|$.
Assume that every edge has multiplicity at most $\mu$.
Then 
\[ \crs(G) = \Omega\left(\frac{|E|^3}{\mu|V|^2}\right). \]
\end{lemma}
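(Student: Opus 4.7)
The plan is to reduce Lemma \ref{le:CrossMult} to the simple-graph crossing lemma (Lemma \ref{le:Cross}) via the standard probabilistic amplification, starting from a linear planarity-style base bound tailored to bounded-multiplicity multigraphs.

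First I would establish a linear base bound: for every multigraph $H$ with max edge multiplicity $\mu$ and $|V(H)|\ge 3$,
\[ \crs(H) \;\ge\; |E(H)| - 3\mu\,|V(H)|. \]
To prove this, take an optimal drawing of $H$ with $\crs(H)$ crossings and delete one edge from each crossing pair. The resulting drawing is planar, and its underlying simple graph has at most $3|V(H)|-6$ edges by Euler's formula. Each such simple edge accounts for at most $\mu$ parallel edges of $H$, so the planar sub-multigraph has at most $3\mu\,|V(H)|$ edges; rearranging yields the claim.

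Next I would amplify via random vertex deletion. Fix a parameter $p\in(0,1]$, retain each vertex of $V$ independently with probability $p$, and let $G_p$ be the induced sub-multigraph. Then $\ex{|V(G_p)|}=p|V|$ and $\ex{|E(G_p)|}=p^2|E|$. Using any fixed drawing of $G$ realizing $\crs(G)$, a given crossing survives in the induced drawing of $G_p$ iff all four of its endpoints survive, giving $\ex{\crs(G_p)}\le p^4\,\crs(G)$. Since $G_p$ inherits max multiplicity at most $\mu$, applying the base bound to $G_p$ and taking expectations yields
\[ p^4\,\crs(G) \;\ge\; p^2|E| - 3\mu p\,|V|. \]

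Finally, I would optimize over $p$. The hypothesis $|E|\ge 5\mu|V|$ lets me choose $p$ of the form $c\mu|V|/|E|$ for a suitable absolute constant $c>3$, keeping $p\le 1$ while ensuring that the positive term $p^2|E|$ dominates the subtracted term $3\mu p|V|$ by a constant factor. Substituting this choice into the rearranged inequality $\crs(G)\ge |E|/p^2 - 3\mu|V|/p^3$ collapses both terms to the desired cubic shape $\Omega\!\left(|E|^3/(\mu^{?}|V|^2)\right)$. The main (minor) obstacle is the bookkeeping of constants to ensure the quantitative hypothesis $|E|\ge 5\mu|V|$ suffices; beyond that, the argument is the standard Leighton / Ajtai--Chv\'atal--Newborn--Szemer\'edi amplification, and no geometric input beyond Euler's formula is needed.
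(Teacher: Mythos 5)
The paper does not prove Lemma~\ref{le:CrossMult} but simply cites Sz\'ekely~\cite{Szek97}, so there is no in-paper proof to compare against; however, your proposed argument has a genuine quantitative gap. Your base bound $\crs(H)\ge |E(H)|-3\mu|V(H)|$ is correct (and, after checking the small-vertex cases, holds without the $|V(H)|\ge 3$ hypothesis), and the vertex-sampling step giving $p^4\crs(G)\ge p^2|E|-3\mu p|V|$ is also fine. The problem is in the final optimization: rearranging gives $\crs(G)\ge |E|/p^2-3\mu|V|/p^3$, and for $p=c\mu|V|/|E|$ this evaluates to $(1/c^2-3/c^3)\,|E|^3/(\mu^2|V|^2)$. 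That is $\Omega\bigl(|E|^3/(\mu^2|V|^2)\bigr)$, one full factor of $\mu$ weaker than the claimed $\Omega\bigl(|E|^3/(\mu|V|^2)\bigr)$. This is not a bookkeeping issue: the maximum of $|E|/p^2-3\mu|V|/p^3$ over $p\in(0,1]$ occurs at $p=\Theta(\mu|V|/|E|)$ and there both terms are $\Theta(|E|^3/(\mu^2|V|^2))$, so no choice of $p$ recovers the missing $\mu$.

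The missing idea is a second, independent round of sparsification that produces a \emph{simple} subgraph, so you may invoke the $\mu$-free base bound $\crs\ge|E|-3|V|$ instead of $\crs\ge|E|-3\mu|V|$. Concretely: after retaining each vertex independently with probability $p$, for each surviving bundle of parallel edges pick a uniformly random representative and keep it with probability $(\text{bundle size})/\mu$, so that each individual edge of $G$ survives with probability exactly $1/\mu$ and the result $G'$ is simple. Then $\ex{|V(G')|}=p|V|$, $\ex{|E(G')|}=p^2|E|/\mu$, and each crossing of a fixed optimal drawing of $G$ (four distinct endpoints, two distinct bundles, since in an optimal drawing parallel edges do not cross) survives with probability $p^4/\mu^2$, so $\ex{\crs(G')}\le p^4\crs(G)/\mu^2$. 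Applying $\crs(G')\ge |E(G')|-3|V(G')|$ and taking expectations yields
\[
\frac{p^4\crs(G)}{\mu^2}\;\ge\;\frac{p^2|E|}{\mu}-3p|V|,
\qquad\text{i.e.}\qquad
\crs(G)\;\ge\;\frac{\mu|E|}{p^2}-\frac{3\mu^2|V|}{p^3},
\]
and the choice $p=5\mu|V|/|E|\le 1$ gives $\crs(G)\ge\frac{2}{125}\,|E|^3/(\mu|V|^2)$, as required. This two-stage sampling is essentially Sz\'ekely's argument.
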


Let $G$ be a graph and let $U$ be a monotone patch of a surface in $\RR^3$.
Since $U$ is homeomorphic to any connected open subset of the plane, the crossing number of $G$ does not change when considering drawings of $G$ on $U$. 
That is, $G$ can be drawn on $U$ with $\crs(G)$ crossings, but not with a smaller number of crossings.

\subsection{Properties of surfaces in $\RR^3$} \label{ssec:Surfaces}

We now derive several lemmas that are used in the proof of Theorem \ref{th:SurfDD}.
These lemmas study properties of surfaces in $\RR^3$. 
The reader may safely skip these lemmas, returning to them as required when going over the proof of Theorem \ref{th:SurfDD}.
Recall that we define a surface as an irreducible variety of dimension two.

\parag{Surfaces of revolution.} A surface $U\subset \RR^3$ is a \emph{surface of revolution} if there exists a line $\ell$ such that every rotation around $\ell$ is a symmetry of $U$.
We refer to $\ell$ as the \emph{axis} of $U$.
Note that $U$ is a union of disjoint circles with axis $\ell$.
Planes and spheres are surfaces of revolution with infinitely many axes.

\begin{lemma} \label{le:OneAxis}
Excluding planes and sphere, every surface of revolution has exactly one axis.
\end{lemma}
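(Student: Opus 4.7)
The plan is to suppose $U$ has two distinct axes $\ell_1 \ne \ell_2$ and show that $U$ must then be a plane or a sphere. Three cases will arise, depending on whether $\ell_1,\ell_2$ are parallel, intersect at a single point, or are skew. In each case I will analyze the closed subgroup $G \le SE(3)$ of rigid motions generated by the two one-parameter families of rotations about $\ell_1$ and $\ell_2$. Since each such rotation is a symmetry of $U$, the closure $\overline{G}$ also acts on $U$, so $U$ is a union of $\overline{G}$-orbits.

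For the parallel case I will place $\ell_1$ on the $z$-axis and $\ell_2$ as a parallel line at distance $d>0$. A short calculation should show that composing a rotation by $\theta$ around $\ell_1$ with a rotation by $-\theta$ around $\ell_2$ is a horizontal translation whose vector traces a full circle of radius $d$ in the $xy$-plane as $\theta$ varies. That circle spans a $2$-dimensional subspace, so $\overline{G}$ contains every horizontal translation, and together with the rotations about $\ell_1$ it acts as all of $SE(2)$ on each horizontal plane $\{z=c\}$. Every horizontal slice of $U$ is therefore either empty or all of $\{z=c\}$, and since $U$ is irreducible of dimension two, $U$ must be a single horizontal plane.

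For the intersecting case I will put the common point $p$ at the origin, so that both rotation groups sit inside $SO(3)$. Because $\mathfrak{so}(3)\cong\RR^3$ with the cross product, any two linearly independent elements together with their bracket form a basis; hence the Lie algebra of $\overline{G}$ is all of $\mathfrak{so}(3)$ and $\overline{G}=SO(3)$. Then $U$ is invariant under every rotation about $p$, the intersection of $U$ with each sphere centered at $p$ is empty or the whole sphere, and irreducibility plus the dimension condition force $U$ to be a single sphere around $p$.

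For the skew case, with $\ell_1$ the $z$-axis and $\ell_2$ a horizontal line at distance $d$, I will compute brackets of the corresponding generators $L_1,L_2\in\mathfrak{se}(3)$. I expect $[L_1,L_2]$ to be a rotation about a third axis, and a further bracket to produce a nonzero pure translation; together with the three rotations these should span $\mathfrak{se}(3)$. Then $\overline{G}=SE(3)$, forcing $U=\RR^3$ and contradicting $\dim U=2$, which rules out the skew case entirely. The main obstacle I anticipate is precisely this last bracket bookkeeping: one has to verify that the skew configuration is exactly what forces a genuine translation into the Lie algebra, whereas intersecting or parallel axes do not, so that the three cases really are exhaustive and really do force $U$ to be a plane or a sphere.
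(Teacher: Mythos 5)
Your approach is correct but genuinely different from the paper's. The paper's argument (given only as a sketch) handles parallel axes in one line, and for any two non-parallel axes $\ell$ and $\ell'$ it observes that rotating $\ell$ about $\ell'$ produces axes of $U$ in infinitely many distinct directions; since $U$ is a union of circles orthogonal to each axis direction, every point of $U$ then lies on infinitely many circles contained in $U$, and the paper invokes a known classification (citing \cite{SSZ15}) that only planes and spheres have this property. In particular the paper never needs to distinguish intersecting axes from skew ones. Your proof instead builds the closed subgroup $\overline{G}\le SE(3)$ generated by the two one-parameter rotation families and computes its Lie algebra in each of the three configurations (parallel, intersecting, skew), showing directly that $\overline{G}$ acts transitively on the relevant leaves (horizontal planes, spheres about the common point, or all of $\RR^3$), so that irreducibility and $\dim U=2$ force $U$ to be a plane, a sphere, or impossible. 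This buys self-containment: you avoid relying on the nontrivial ``infinitely many circles through every point'' classification theorem, at the cost of some Lie bracket bookkeeping (especially in the skew case, which you flag as the main remaining step; the computation does go through, producing a rotation about a third axis and then a pure translation, which together with $\mathfrak{so}(3)$ span $\mathfrak{se}(3)$). One small point worth making explicit in the intersecting case: once $U$ is a union of $SO(3)$-orbits about $p$, a $2$-dimensional variety can contain only finitely many spheres as components, and irreducibility then picks out exactly one; you should also rule out $U=\{p\}$ by the dimension hypothesis.
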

\begin{proof}[Proof sketch.]
Assume that $U$ is a surface of revolution that has distinct axes $\ell$ and $\ell'$. 
If $\ell$ and $\ell'$ are parallel then $U$ is a plane orthogonal to both lines.
We may thus assume that $\ell$ and $\ell'$ are not parallel.

Since $\ell$ is an axis of $U$, we have that $U$ is a union of circles with axis $\ell$.
As we rotate $\RR^3$ around $\ell'$, the line $\ell$ has infinitely many distinct directions.
Since $U$ remains unchanged by such rotations and $\ell$ remains an axis of $U$, there are infinitely many directions $v$ such that $U$ is the union of circles orthogonal to $v$. 
Thus, every point of $U$ is incident to infinitely many circles that are contained in $U$. 
Only planes and spheres have this property (for example, see \cite{SSZ15}).
\end{proof}

\begin{lemma} \label{le:CircsForceRevo}
For every integer $k$ there exists a constant $\delta_k = O_k(1)$ that satisfies the following. 
Let $U\subset \RR^3$ be a surface of degree $k$.
For a line $\ell\subset \RR^3$, let $\circs_\ell$ be the set of circles with axis $\ell$ that are contained in $U$. 
If $|\circs_\ell|>\delta_k$ then $U$ is a surface of revolution with axis $\ell$.
\end{lemma}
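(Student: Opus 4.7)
The plan is to compare $U$ with its own rotations around $\ell$ and apply Theorem~\ref{th:comps}(a) to bound the size of $\circs_\ell$ whenever $U$ fails to be invariant under some such rotation. After a rigid motion we may assume that $\ell$ is the $z$-axis, and write $U = \vb(f)$ for some $f \in \RR[x,y,z]$ of degree $k$. For each $\theta \in [0,2\pi)$, let $R_\theta$ denote rotation about the $z$-axis by $\theta$. Then $R_\theta(U) = \vb(f\circ R_\theta^{-1})$ is again an irreducible surface of degree at most $k$, and the goal is to show: if $|\circs_\ell|$ exceeds a constant $\delta_k = O_k(1)$, then $R_\theta(U) = U$ for every $\theta$, which is precisely the statement that $U$ is a surface of revolution with axis $\ell$.

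Fix $\theta$ and set $W_\theta := U \cap R_\theta(U) = \vb(f,\, f\circ R_\theta^{-1})$. Any circle $C \in \circs_\ell$ is set-wise invariant under every rotation around its own axis, hence $C = R_\theta(C) \subset R_\theta(U)$; combined with $C \subset U$, this gives $C \subset W_\theta$. Now suppose for contradiction that $R_\theta(U) \neq U$. Since $U$ is irreducible of dimension $2$, this forces $\dim W_\theta \le 1$. The variety $W_\theta$ is cut out by two polynomials of degrees at most $k$, so it has complexity $O_k(1)$, and Theorem~\ref{th:comps}(a) bounds the number of its irreducible components by some $\delta_k = O_k(1)$ that depends only on $k$.

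To conclude, observe that each circle in $\circs_\ell$ is an irreducible $1$-dimensional subvariety of $W_\theta$, and distinct circles must lie in distinct irreducible components: if two circles $C \neq C'$ were both contained in a single irreducible component $V$ of $W_\theta$, then $V$ would be an irreducible $1$-dimensional variety strictly containing the irreducible $1$-dimensional variety $C$, which is impossible. Hence $|\circs_\ell|$ is at most the number of irreducible components of $W_\theta$, so $|\circs_\ell| \le \delta_k$. Contrapositively, if $|\circs_\ell| > \delta_k$ then $R_\theta(U) = U$ for every $\theta$, as required. I do not anticipate a substantial obstacle; the only subtlety is ensuring that the complexity bound, and therefore $\delta_k$, is uniform in $\theta$, which holds because rotation preserves polynomial degree so the coefficient-dependence of $f \circ R_\theta^{-1}$ on $\theta$ is immaterial to the conclusion of Theorem~\ref{th:comps}(a).
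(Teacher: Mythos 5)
Your proof is correct, and it takes a more direct route than the paper's. The paper first parametrizes circles with axis $\ell$ by $(h,r)\in\RR^2$, shows that the set $W\subset\RR^2$ of parameters yielding circles contained in $U$ is a constant-complexity variety, and uses Theorem~\ref{th:comps} to conclude that if $|\circs_\ell|>\delta_k$ then $W$ is positive-dimensional, hence $\circs_\ell$ is actually infinite; only then does it look at $U_\alpha=U\cap R_\alpha(U)$, which must be two-dimensional because it contains infinitely many disjoint circles, forcing $U_\alpha=U$ by irreducibility. You skip the parameter-space detour entirely: you go straight to $W_\theta=U\cap R_\theta(U)$, observe that each circle in $\circs_\ell$ is invariant under $R_\theta$ and hence sits inside $W_\theta$, and then argue that if $R_\theta(U)\neq U$ (so $\dim W_\theta\le 1$ by irreducibility of $U$), then $W_\theta$ has $O_k(1)$ irreducible components by Theorem~\ref{th:comps}(a), each of which can contain at most one circle. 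This bounds $|\circs_\ell|$ by $\delta_k$ uniformly in $\theta$, and the contrapositive gives the lemma. Both proofs hinge on the same two tools --- component counting via Theorem~\ref{th:comps}(a) and irreducibility of $U$ --- but yours bounds components of the intersection $W_\theta\subset\RR^3$ directly, which is shorter and produces the constant $\delta_k$ more transparently, whereas the paper's parameter-space argument yields the (slightly stronger, though unused here) intermediate fact that a large $\circs_\ell$ must be infinite.
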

\begin{proof}
By rotating and translating $\RR^3$, we may assume that $\ell$ is the $z$-axis.
Then, every circle of $\circs_\ell$ can be defined using two parameters $h,r$ as
\[ C_{h,r} = \left\{ \left(r\cdot \frac{2t}{1+t^2}, r\cdot \frac{1-t^2}{1+t^2},h\right)\ :\ t\in \RR \right\}. \]
(One point of the circle is missing in this parametrization: $(0,-r,h)$.) 

Let $f\in \RR[x,y,z]$ be a degree $k$ polynomial satisfying $\vb(f)=U$ (after the above transformation).
Then $C_{h,r} \subset U$ if and only if 
\[ f\left(r\cdot \frac{2t}{1+t^2}, r\cdot \frac{1-t^2}{1+t^2},h\right) \]
is identically zero.
This expression is a rational function in $t$, which may not be a polynomial.
We define
\[ g(t) = f\left(r\cdot \frac{2t}{1+t^2}, r\cdot \frac{1-t^2}{1+t^2},h\right) \cdot (1+t^2)^k. \]

Note that $g(t)$ is a polynomial in $t$ of degree at most $2k$.
In addition, $C_{h,r} \subset U$ if and only if $g(t)$ is identically zero.  
That is, all the coefficients of $g(t)$ are zero.
Each such coefficient is a polynomial in $h$ and $r$.

Consider another plane $\RR^2$ with coordinates $h$ and $r$.
Let $W$ be the set of points $(h,r)\in \RR^2$ that satisfy $C_{h,r}\subset U$. By the preceding paragraph, $W$ is a variety of complexity $O_k(1)$.
Indeed, $W$ is defined by asking each of the coefficients of $g(t)$ to vanish. 
By Theorem \ref{th:comps}, there exists $\delta_k$ such that $W$ consists of at most $\delta_k$ components.
By assumption $|\circs_\ell| > \delta_k$, so $W$ is not zero-dimensional.
That is, $\circs_\ell$ is a one-dimensional family of circles with axis $\ell$.

For $0 \le \alpha < 2\pi$, let $R_\alpha: \RR^3\to \RR^3$ be a rotation of angle $\alpha$ around $\ell$.
Set $U_\alpha = U \cap R_\alpha(U)$. 
In other words, $U_\alpha$ is the set of points of $U$ that remain in $U$ after a rotation of angle $2\pi-\alpha$ around $\ell$. 
Note that $U_\alpha = \vb(f,f\circ R_{2\pi-\alpha})$, so it is a variety. 
Since $\circs_\ell$ is an infinite family of disjoint circles with axis $\ell$, we get that $U_\alpha$ is two-dimensional.
Since $U$ is irreducible and $U_\alpha\subseteq U$, we conclude that $U=U_\alpha$. 

The preceding paragraph holds for every $0 \le \alpha < 2\pi$.
That is, every rotation around $\ell$ is a symmetry of $U$. 
In other words, $U$ is a surface of revolution with axis $\ell$.
\end{proof}

Recall that we defined aligned and perpendicular circles in the introduction. 

\begin{lemma} \label{le:AlignPerpInSurf} 
Let $U\subset \RR^3$ be a surface of degree $k$. Let $\circs$ be a set of circles that are contained in $U$, such that every two circles of $\circs$ are either aligned or perpendicular.  \\[2mm]
(a) If $U$ is not a surface of revolution then $|\circs|=O_k(1)$. \\[2mm]
(b) Assume that $U\subset \RR^3$ is a surface revolution with a unique axis $\ell$.
If no circle of $\circs$ has axis $\ell$ then  $|\circs|=O_k(1)$.
\end{lemma}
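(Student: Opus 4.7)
The plan is to prove both parts uniformly by partitioning $\circs$ into equivalence classes of mutually aligned circles (circles sharing the same axis), bounding the size of each class using Lemma \ref{le:CircsForceRevo}, and then separately bounding the number of classes using elementary linear algebra in $\RR^3$. Within any class the circles share a common axis, while across classes the hypothesis forces all pairs of circles to be perpendicular.

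First I would bound the size of each class. For a class with axis $\ell'$, every circle in the class belongs to $\circs_{\ell'}$, so by Lemma \ref{le:CircsForceRevo} either the class has size at most $\delta_k$ or $U$ is a surface of revolution with axis $\ell'$. In part (a), $U$ is not a surface of revolution at all, so the second alternative is ruled out and every class has size at most $\delta_k$. In part (b), $U$ is a surface of revolution with unique axis $\ell$, and by hypothesis no circle in $\circs$ has axis $\ell$, so every class axis $\ell'$ satisfies $\ell' \ne \ell$; by the uniqueness from Lemma \ref{le:OneAxis}, $U$ is not a surface of revolution with axis $\ell'$, and again every class has size at most $\delta_k$.

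Next I would argue that at most three classes arise. The key observation is twofold. First, if two circles $C$ and $C'$ are perpendicular, then the planes containing them are perpendicular, so their axes, being the normals to these planes, have perpendicular directions; hence any two distinct class axes have perpendicular directions. Second, two distinct class axes cannot merely be parallel, for if $\ell_1 \ne \ell_2$ were parallel lines with the same direction, then circles drawn from the two classes would have parallel planes and therefore could be neither perpendicular nor (being in different classes) aligned, contradicting the hypothesis. Thus distinct class axes have pairwise orthogonal direction vectors, and since $\RR^3$ contains at most three pairwise orthogonal directions, $\circs$ has at most three classes.

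Combining the two bounds yields $|\circs| \le 3\delta_k = O_k(1)$, proving both (a) and (b). The main subtlety to be careful about is the second half of the axis observation: ``aligned'' is a strictly stronger condition than ``parallel axes,'' and one must use the full alignment definition to exclude parallel-but-distinct axes. Once that point is handled, the rest of the argument reduces to Lemma \ref{le:CircsForceRevo} and the fact that $\RR^3$ admits at most three mutually perpendicular directions.
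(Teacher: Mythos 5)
Your proof is correct and takes a cleaner route than the paper's. The paper's argument first bounds $|\circs_H|$ (circles of $\circs$ lying in a common plane $H$) via a line through the common center, then bounds $|\circs_\ell|$ via Lemma \ref{le:CircsForceRevo}, and then runs a case analysis: if $\circs$ contains two aligned non-coplanar circles, it shows all circles of $\circs$ share that axis; otherwise, it invokes the bound of three pairwise-perpendicular planes and reduces to the $\circs_H$ bound. Your proof collapses this into a single structure: partition $\circs$ by common axis, bound each class directly by Lemma \ref{le:CircsForceRevo}, and observe that the axis directions of distinct classes must be pairwise orthogonal, giving at most three classes. This avoids both the separate $\circs_H$ argument (which the paper only needs because its perpendicular case falls back on coplanarity) and the explicit two-case split. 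The essential inputs are the same — Lemma \ref{le:CircsForceRevo} and the fact that $\RR^3$ admits at most three pairwise orthogonal directions — but your equivalence-class packaging makes the bookkeeping more transparent and easier to check. One small presentational note: the ``hence any two distinct class axes have perpendicular directions'' in your first observation already presupposes that cross-class pairs are perpendicular rather than aligned; it would read better to state that step explicitly (different axis $\Rightarrow$ not aligned $\Rightarrow$ perpendicular by hypothesis) before deducing orthogonality of the normals, since your ``second'' observation about parallel axes, while true, is then redundant rather than load-bearing.
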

\begin{proof} 
(a) For a plane $H\subset \RR^3$, let $\circs_H$ be the set of circles of $\circs$ that are contained in $H$.
Since circles in the same plane cannot be perpendicular, every two circles of $\circs_H$ are aligned.
That is, every two circles of $\circs_H$ are concentric.
Since $U$ is not a plane, $\gamma = U\cap H$ is a variety of dimension at most one and complexity $O_k(1)$.
Let $\ell$ be a line contained in $H$ and incident to the center of the circles of $\circs_H$.
Then $\ell$ intersects every circle of $\circs_H$ in two distinct points. 
By Theorem \ref{th:comps}, when $|\circs_H|$ is sufficiently large, the intersection $\ell \cap \gamma$ cannot be zero dimensional.
That is, we obtain that $\ell\subset \gamma$. 
Since this holds for every such line $\ell$, we get that $\gamma=H$.
This implies that $U=H$, contradicting the assumption that $U$ is not a surface of revolution.
We conclude that $|\circs_H|=O_k(1)$.

Let $\ell\subset \RR^3$ be a line and let $\circs_\ell$ be the set of circles of $\circs$ that have $\ell$ as their axis. 
By Lemma \ref{le:CircsForceRevo}, we have that $|\circs_\ell|=O_k(1)$.
That is, the maximum number of circles of $\circs$ that have a common axis is $O_k(1)$.

Assume that $\circs$ contains two aligned circles $C_1$ and $C_2$ that do not lie in the same plane.
No circle can be perpendicular to both $C_1$ and $C_2$, since the center of such a circle would need to lie on two parallel planes. 
Thus, all the circles in $\circs$ are aligned.
By the preceding paragraph, in this case $|\circs|=O_k(1)$.

No four planes in $\RR^3$ are pairwise perpendicular. 
This implies that no four circles in $\RR^3$ are pairwise perpendicular.
Thus, $\circs$ contains at most three pairwise perpendicular circles. 
By the above, the plane of each of these circles contains $O_k(1)$ circles of $\circs$.
We conclude that, if $\circs$ contains perpendicular circles then $|\circs|=O_k(1)$

We handled the case where $\circs$ contains two aligned circles that are not coplanar. 
We also handled the case where $\circs$ contains two perpendicular circles.
These complete the proof, since no other case remains.

(b) We claim that the proof of part (a) also holds for part (b).
In that proof, we only used $U$ not being a surface of revolution when applying Lemma \ref{le:CircsForceRevo}.
Due to the assumption that the circles of $\circs$ do not have axis $\ell$, we may apply Lemma \ref{le:CircsForceRevo} as before.
\end{proof}

\begin{lemma} \label{le:SmallAngle}
Let $U\subset \RR^3$ be a surface of degree $k$.
Let $R$ be a rotation around a line $\ell$ that is a symmetry of $U$. 
If the angle of $R$ is smaller than a constant depending only on $k$, then $U$ is a surface of revolution with axis $\ell$.
\end{lemma}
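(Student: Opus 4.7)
The plan is to exploit that every power $R^j$ of $R$ is again a symmetry of $U$. Applying many such powers to a single point produces many points of $U$ on a common circle with axis $\ell$, and Bezout forces that entire circle into $U$. Once enough such circles accumulate on $U$, Lemma \ref{le:CircsForceRevo} identifies $U$ as a surface of revolution with axis $\ell$.

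More concretely, let $\alpha \in (0,2\pi)$ be the rotation angle of $R$ and suppose $\alpha < \pi/k$. For every $0 \le i < j \le 2k$ one has $(j-i)\alpha \in (0,2\pi)$, so the maps $R^0 = I, R^1, \ldots, R^{2k}$ are pairwise distinct rotations around $\ell$, and each is a symmetry of $U$. Fix any point $p \in U \setminus \ell$ and let $C_p$ be the unique circle with axis $\ell$ passing through $p$. Then $R^0(p), R^1(p), \ldots, R^{2k}(p)$ are $2k+1$ distinct points all contained in $U \cap C_p$.

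Writing $U = \vb(f)$ with $\deg f = k$ and pulling back $f$ along a degree-$2$ rational parameterization of $C_p$, we get either the zero function, in which case $C_p \subset U$, or a nonzero rational expression whose numerator is a polynomial of degree at most $2k$, giving $|U \cap C_p| \le 2k$. The latter is impossible because of the $2k+1$ orbit points just produced, so $C_p \subset U$ for every $p \in U \setminus \ell$. Since $\dim(U \setminus \ell) = 2$ while each $C_p$ is one-dimensional, the family $\{C_p : p \in U \setminus \ell\}$ must contain infinitely many distinct circles with axis $\ell$, all contained in $U$. In particular there are more than $\delta_k$ of them, and Lemma \ref{le:CircsForceRevo} concludes that $U$ is a surface of revolution with axis $\ell$. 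One may then take the constant depending on $k$ to be $c_k = \pi/k$.

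The only non-cosmetic step is the Bezout-style bound $|U \cap C_p| \le 2k$, which is what converts a finite orbit into a full circle inside $U$; everything else is an arithmetic check that $\alpha < \pi/k$ produces enough distinct powers of $R$, followed by an appeal to the already-proved Lemma \ref{le:CircsForceRevo}.
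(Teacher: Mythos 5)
Your proof follows essentially the same route as the paper's: iterate the rotation to place many orbit points on a circle with axis $\ell$, force that circle into $U$ via a point-count bound, accumulate infinitely many such circles by a dimension argument, and conclude via Lemma \ref{le:CircsForceRevo}. The only real difference is cosmetic: the paper invokes the black-box Theorem \ref{th:comps} to bound $|U\cap C_p|$, whereas you run the explicit degree count, which yields the concrete constant $\pi/k$ in place of an unspecified $O_k(1)$. One small bookkeeping point: the degree-$2$ rational parameterization of $C_p$ misses one point of the circle, so the bound you actually get is $|U\cap C_p|\le 2k+1$ (not $2k$) whenever the missing point happens to lie in $U$. This is harmless --- you can rotate the choice of parameterization so the missing point avoids the $2k+1$ orbit points, or simply take $2k+2$ orbit points by requiring $\alpha<\pi/(k+1)$ --- but as written the claim ``$|U\cap C_p|\le 2k$'' is slightly too strong.
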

\begin{proof}
Consider a point $p\in U$ and let $H$ be the plane orthogonal to $\ell$ and incident to $p$.
Let $C$ be the circle in $H$ that is incident to $p$ and with axis $\ell$. 
By applying $R$ to $p$, we obtain additional points of $U$ that are in $C$.
Taking  the angle of $R$ to be sufficiently small, we may assume that $U \cap C$ is arbitrarily large.
Then, Theorem \ref{th:comps} implies that the intersection $U\cap C$ is not zero-dimensional.
This implies that $C\subset U$.

We may assume that $U$ is not a plane, since otherwise we are done.
Then $U$ intersects infinitely many planes orthogonal to $\ell$.
By the preceding paragraph, $U$ contains a circle with axis $\ell$ in each of those planes. 
By Lemma \ref{le:CircsForceRevo}, we conclude that $U$ is a surface of revolution with axis $\ell$.
\end{proof}

\parag{Ruled surfaces.} 
Let $U$ be a surface in $\RR^3$. 
As stated in the introduction, $U$ is \emph{ruled} if for every point $p\in U$ there exists a line that is contained in $U$ and incident to $p$.
We only require one basic property of non-ruled surfaces.
For more details, see for example \cite[Corollary 3.3]{GK15}.

\begin{lemma} \label{le:Ruled} 
A non-ruled surface of degree $D$ in $\RR^3$ contains $O_D(1)$ lines. 
\end{lemma}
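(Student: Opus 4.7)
The plan is to invoke the classical flecnode construction of Cayley--Salmon, exactly as used by Guth and Katz in the reference cited by the lemma. Let $f\in\RR[x,y,z]$ be a polynomial of degree $D$ with $\vb(f)=U$. I would associate to $f$ its \emph{flecnode polynomial} $\mathrm{Flec}(f)$, a polynomial of degree at most $11D-24$ whose zero set picks out those points $p\in \vb(f)$ at which some line meets $\vb(f)$ to order at least three. The key classical fact, which I would quote, is that $\vb(f)$ is ruled if and only if $f$ divides $\mathrm{Flec}(f)$, equivalently, if and only if every regular point of $\vb(f)$ is a flecnodal point.

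Given this, the argument is short. Since $U$ is not ruled, $f$ does not divide $\mathrm{Flec}(f)$, so $W := \vb(f)\cap \vb(\mathrm{Flec}(f))$ is a proper subvariety of the irreducible surface $U$. Hence $\dim W \le 1$, and by B\'ezout's theorem the degree (and in particular the number of irreducible components) of $W$ is at most $D\cdot (11D-24) = O_D(1)$.

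Next I would verify that every line $\ell\subset U$ is contained in $W$. This is the defining property of the flecnode polynomial: at every point $p\in \ell$, the line $\ell$ itself meets $U$ to infinite order, in particular to order at least three, so $\mathrm{Flec}(f)(p)=0$. Thus $\ell\subset W$. Since $W$ is a curve of degree $O_D(1)$, it contains at most $O_D(1)$ irreducible components of degree one, i.e. at most $O_D(1)$ lines.

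The only delicate point, and the one I would be most careful with, is the statement ``$f$ does not divide $\mathrm{Flec}(f)$ implies $\dim W \le 1$,'' which uses that $U$ is irreducible so that $f$ is (up to a power) the defining polynomial, together with the classical flecnode criterion for ruledness. Once this is in place, everything else is a one-line B\'ezout count, and I would simply cite \cite[Corollary 3.3]{GK15} for the detailed flecnode calculus rather than redeveloping it.
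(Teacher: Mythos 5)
Your argument is exactly the Cayley--Salmon flecnode computation underlying \cite[Corollary 3.3]{GK15}, which is the same reference the paper points to (the paper gives no independent proof, it simply cites that corollary). So your proposal is correct and matches the paper's approach; the one point to keep in mind when relying on the cited corollary is the passage between real and complex ruledness, which Guth--Katz handle explicitly and which you correctly defer to them.
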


\parag{Cylindrical surfaces.} A surface $U\subset \RR^3$ is a \emph{cylindrical surface} if there exists a curve $\gamma$ that satisfies the following. 
The curve $\gamma$ is contained in a plane $H\subset \RR^3$ with normal $\vec{v}$.
The surface $U$ is the union of the lines in $\RR^3$ of direction $\vec{v}$ that are incident to a point of $\gamma$.
Equivalently, $U$ is the union of the translations of $\gamma$ in direction $\vec{v}$.
Note that a plane is a cylindrical surface and that every cylindrical surface is also a ruled surface. 

\begin{lemma} \label{le:parallelRef}
Let $U\subset \RR^3$ be a surface with reflectional symmetries about the distinct planes $H_1$ and $H_2$.
If $H_1$ and $H_2$ are parallel then $U$ is a cylindrical surface.
\end{lemma}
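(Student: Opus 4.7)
The plan is to exploit the fact that the composition of two reflections in parallel planes is a nontrivial translation. Let $\vec{v}$ be the common unit normal to $H_1$ and $H_2$, and let $T$ denote the composition of the two reflections. A short computation shows that $T$ is translation by $2d\vec{v}$, where $d\neq 0$ is the signed distance between the two planes. As a composition of two symmetries of $U$, the translation $T$ is itself a symmetry of $U$, so $T(U)=U$.

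Iterating $T$, for every $p\in U$ the orbit $\{T^n(p):n\in\ZZ\}$ is an infinite collection of collinear points of $U$ lying on the line $\ell_p$ through $p$ with direction $\vec{v}$. Since $U$ is a surface (an irreducible two-dimensional variety) and $\ell_p$ is an irreducible one-dimensional variety, the intersection $U\cap\ell_p$ is a subvariety of $\ell_p$, so it is either zero-dimensional or equal to all of $\ell_p$. The infinite orbit excludes the former, so $\ell_p\subset U$. Hence every point of $U$ lies on a line in direction $\vec{v}$ that is entirely contained in $U$.

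To extract the generating curve, take the plane $H=H_1$, whose normal is $\vec{v}$. If $U=H$, then $U$ is a plane, which the paper already notes is a cylindrical surface. Otherwise, let $\pi:\RR^3\to H$ be the orthogonal projection along $\vec{v}$. By the previous step, each fiber of $\pi|_U$ is a line in direction $\vec{v}$, so by the fiber-dimension formula for semi-algebraic maps (Lemma \ref{le:projection} together with Lemma \ref{le:ConnSA}-style dimension bookkeeping), the image $\pi(U)\subseteq U\cap H$ has dimension one. Let $\gamma\subset H$ be the Zariski closure of $\pi(U)$, a one-dimensional variety contained in $H$. Every point of $U$ lies on a $\vec{v}$-line meeting $H$ at a point of $\gamma$, so $U$ is contained in the cylindrical surface $\widetilde{U}$ generated by $\gamma$ and $\vec{v}$, and both $U$ and $\widetilde{U}$ have dimension two. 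The irreducibility of $U$ then forces $U$ to coincide with the cylinder over a unique one-dimensional irreducible component of $\gamma$, completing the proof.

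The main delicate point is the final step, where one must ensure that the generator $\gamma$ can be taken to be an irreducible curve in the sense of the paper's definition. This is handled by the irreducibility of $U$: if $\gamma$ had two distinct one-dimensional irreducible components, the cylinder it generates would decompose into two distinct two-dimensional varieties, contradicting the irreducibility of $U$. Everything else is routine once the translation-invariance of $U$ has been established.
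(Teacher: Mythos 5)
Your proof is correct and follows the same approach as the paper: composing the two parallel reflections yields a nontrivial translation $T$ in the direction $\vec{v}$ normal to the planes, and since $T$ is a symmetry of $U$, iterating it places infinitely many points of $U$ on each $\vec{v}$-line through a point of $U$, forcing that line to lie in $U$. Your final paragraph, which explicitly constructs the generating curve $\gamma$ via projection to $H_1$ and uses the irreducibility of $U$ to reduce to a single irreducible component, supplies detail the paper leaves implicit (it simply concludes ``$U$ is a cylindrical surface'' once the through-every-point line property is established), and your version is a sound and slightly more complete rendering of the same argument.
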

\begin{proof}
Let $\delta$ denote the distance between $H_1$ and $H_2$.
Let $\vec{v}$ be a vector in the direction orthogonal to $H_1$ and $H_2$.
It is not difficult to verify that composing the two reflections leads to a translation $T$ of distance $\delta$ in direction $\pm\vec{v}$.
By definition, $T$ is also a symmetry of $U$. 

Let $p$ be a point of $U$.
Let $\ell\subset \RR^3$ be the line incident to $p$ and of direction $\vec{v}$.
By repeatedly applying $T$ on $p$, we obtain that $\ell$ contains infinitely many points of $U$.
By Theorem \ref{th:comps}, the intersection $U\cap \ell$ is not zero-dimensional.
This implies that $\ell\subseteq U$.
In other words, for every $p\in U$ there exists a line $\ell\subset U$ incident to $p$ and of direction $\vec{v}$.
We conclude that $U$ is a cylindrical surface.
\end{proof}

\subsection{Proof of Theorem \ref{th:SurfDD}} \label{ssec:SurfDDproof}

We are now ready to prove our main result. We first recall the statement of this theorem.
\vspace{2mm}

\noindent {\bf Theorem \ref{th:SurfDD}.}
\emph{Let $U\subset \RR^3$ be a non-ruled surface of degree $k$.
Let $\pts$ be a set of $n$ points on $U$.
Then for any $\eps>0$, we have 
 }
\[ D(\pts) = \Omega_k\left(n^{32/39-\eps}\right). \]

\begin{proof}
Since $U$ is non-ruled, it is not a plane. By Theorem \ref{th:Tao}, we may also assume that $U$ is not a sphere. (We may also assume that $U$ is not a hyperboloid of two sheet, but this is not necessary for our proof.)

By Lemma \ref{le:Ruled}, the surface $U$ contains $O_k(1)$ lines. 
If one of these lines contains $\Omega(n/\log n)$ points of $\pts$, then these points span $\Omega(n/\log n)$ distinct distances. 
We may thus assume that each of these lines contains $O(n/\log n)$ points of $\pts$.
We discard from $\pts$ all points that lie on a line contained in $U$.
After a generic rotation around the origin, we may also assume that $U$ does not contain lines that are parallel to the $z$-axis.

We apply Theorem \ref{th:MonPatches}, to obtain a variety $U_0$ and monotone patches $U_1,\ldots,U_M$. 
By definition, $U_0$ is of dimension at most one and complexity $O_k(1)$.
We may assume that $U_0$ contains $O(n/\log n)$ points of $\pts$.
Otherwise, Lemma \ref{le:DDcurve} implies a result stronger than required.
Since $M=O_k(1)$, by the pigeonhole principle there exists a monotone patch $U_j$ that contains $\Theta_k(n)$ points of $\pts$.
We discard from $\pts$ the points that are not on $U_j$.
Abusing notation, we refer to the set of remaining points as $\pts$ and to $U_j$ as $U$.

\parag{Constructing a multigraph.}
Recall that, for a point $p\in\RR^3$ and $r>0$, we denote by $S(p,r)$ the sphere of radius $r$ centered at $p$.
We define the \emph{level set} 
\[ L(p,r)=U \cap S(p,r). \]
That is, $L(p,r)$ is the set of points on $U$ at distance $r$ from $p$.
Since $U$ is not a patch of a sphere, every level set is a semi-algebraic set of dimension at most one and complexity $O_k(1)$. 

Let $\lsets$ be the set of level sets $L(p,r)$ that are incident to at least one point of $\pts$ and with $p\in\pts$. 
For every level set $L(p,r)$ from $\lsets$, we discard the zero-dimensional components and the singular points of $L(p,r)$.
By Lemma \ref{le:singular}, this removes $O_k(1)$ points from each level set. 
We discard empty sets from $\lsets$.
Each remaining set is a semi-algebraic set of complexity $O_k(1)$.
We partition every remaining element of $\lsets$ to its connected components.
By Lemma \ref{le:ConnSA}, each such set consists of $O_k(1)$ connected components. 

After the above process, $\lsets$ consists of segments of curves with no singular points. 
Some of those may also be closed.
Since the level sets are contained in spheres, all segments are bounded.
We discard from $\lsets$ segments that contain at most one point of $\pts$.

Following Sz\'ekely's technique \cite{Szek97} for the planar case, we construct a graph $G=(V,E)$ as follows.
We place a vertex in $V$ for each point of $\pts$.
We go over the segments of $\lsets$ and connect every two vertices that are consecutive along such a segment. 
By ``consecutive'' we mean that, when traveling along the segment, there are no other points of $\pts$ between the two vertices.
Since the segments do not contain singular points, no segment intersects itself.
This means that the process of travelling along a segment is well-defined.
For an example of how edges are added, see Figure \ref{fi:GraphEdges}.
Since segments from the same level set are disjoint, such segments cannot lead to parallel edges. 
Thus, an edge $(u,v)$ has multiplicity $r$ when $u$ and $v$ are consecutive along segments from level sets of $r$ distinct points of $\pts$.

\begin{figure}[ht]
    \centering
    \begin{subfigure}[b]{0.3\textwidth}
    \centering
        \begin{tikzpicture}[scale=0.7, every node/.style={fill = blue, draw=black,circle, minimum size = 0.5mm}]
        \draw[domain=20:170,samples=500] plot (\x:{4 + 0.2 * sin (5 * \x)});
        \draw[domain=0:180,samples=500, dashed] plot (\x:{2.6 + 2 * cos (2 * (\x + 90))});
        \node[label=$z$] (N-z) at (30:{4 + 0.2 * sin (5 * 30)}) {};
        \node[label=$y$] (N-y) at (71:{4 + 0.2 * sin (5 * 71)}) {};
        \node[label=$x$] (N-x) at (115:{4 + 0.2 * sin (5 * 115)}) {};
        \node[label=$w$] (N-w) at (160:{4 + 0.2 * sin (5 * 160)}) {};
        \node[label=$v$] (N-v) at (30:{2.6 + 2 * cos (2 * 120)}) {};
        \node[label=$u$] (N-u) at (150:{2.6 + 2 * cos (2 * 240}) {};
    \end{tikzpicture}
    \caption{Example of two level sets and the points on them.}
    \end{subfigure}
    \hspace{2cm}
    \begin{subfigure}[b]{0.3\textwidth}
    \centering
        \begin{tikzpicture}[scale=0.7, every node/.style={fill = blue, draw=black,circle, minimum size = 0.5mm}]
            \node[label=$z$] (N-z) at (30:{4 + 0.2 * sin (5 * 30)}) {};
            \node[label=$y$] (N-y) at (71:{4 + 0.2 * sin (5 * 71)}) {};
            \node[label=$x$] (N-x) at (115:{4 + 0.2 * sin (5 * 115)}) {};
            \node[label=$w$] (N-w) at (160:{4 + 0.2 * sin (5 * 160)}) {};
            \node[label=$v$, right] (N-v) at (30:2) {};
            \node[label=$u$, left] (N-u) at (150:2.4) {};
        
            \path (N-w) edge[-] (N-x);
            \path (N-x) edge[-, bend left = 50] (N-y);
            \path (N-y) edge[-] (N-z);
            \path (N-u) edge[dashed] (N-x);
            \path (N-x) edge[dashed, bend right = 50] (N-y);
            \path (N-y) edge[dashed] (N-v);
        \end{tikzpicture}
        \caption{Graph edges corresponding to the two level sets.}
    \end{subfigure}
    \caption{Example of edges corresponding to two level sets. There are two edges between vertices $x$ and $y$ since they are consecutive along both level sets.} \label{fi:GraphEdges}
\end{figure}
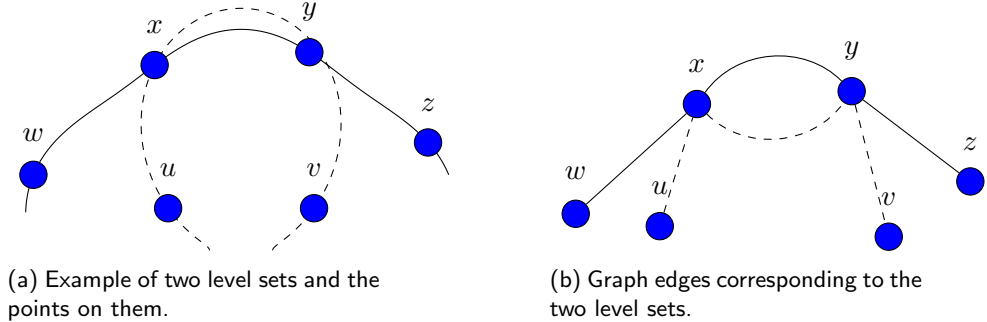

We claim that the number of edges in $G$ is $\Theta(n^2)$. 
In particular, for each $p\in \pts$, the level sets of $p$ contribute $\Theta(n)$ edges to $G$.
Indeed, fix $p\in \pts$. 
The level sets of $p$ contain all points of $\pts$.
We may assume that $D(\{p\},\pts\setminus\{p\})=O(n/\log n)$, since otherwise we are done. 
In other words, the number of level sets $L(p,r)$ that contain at least one point of $\pts$ is $O(n/\log n)$. 
By Theorem \ref{th:comps}, when discarding zero-dimensional components from the level sets, we lose $O_k(n/\log n)$ points of $\pts$.  
For the same reasons, the level sets of $p$ contribute $O_k(n/\log n)$ segments to $\lsets$. These segments are disjoint, since they do not contain singular points. 

By the above, the level sets of a fixed $p\in \pts$ correspond to  $O(n/\log n)$ disjoint segments of $\lsets$ that contain $\Theta(n)$ points of $\pts$. A closed segment that contains $r$ points of $\pts$ contributes $r$ edges to $E$. An open segment that contains $r$ points of $\pts$ contributes $r-1$ edges. 
We conclude that the level sets of $p$ contribute $\Theta(n)$ edges to $E$, which in turn implies that $|E|=\Theta(n^2)$.

\parag{The crossing number.}
The proof of the theorem is based on double counting $\crs(G)$.
Recall that crossing numbers do not change when switching between the plane and a monotone patch.
First, we draw $G$ on $U$ according to its geometric representation.
That is, every vertex of $V$ is placed at the corresponding point of $\pts$.
Every edge of $E$ is drawn as the arc of the level set that led to it. These edges are disjoint from the boundary of $U$.

The intersection of two level sets $L(p_1,r_1)$ and $L(p_2,r_2)$ is $U \cap S(p_1,r_1) \cap S(p_2,r_2)$.
Set $C = S(p_1,r_1) \cap S(p_2,r_2)$ and note that $C$ is either a circle, a single point, or an empty set.
By Theorem \ref{th:comps}, either $C \subset U$ or $|C\cap U| = O_k(1)$.
That is, the intersection of two level sets is either empty, a set of $O_k(1)$ points, or a circle.

When two level sets intersect at a circle $C$, this may lead to two parallel edges of $E$ that are drawn as the same arc of $C$.
In this case, we slightly move the drawing of one of the edges, so that the two edges no longer intersect in their interiors. 
We obtain that the interiors of every two edges of $E$ intersect in $O_k(1)$ points. 

Set $t=D(\pts)$.
For every $p\in \pts$, the number of level sets of $p$ that contain at least one point of $\pts$ is at most $t$.
Thus, there are $O(nt)$ level sets that contain at least one point of $\pts$.
By the above, the number of intersections between the edges that originate from two different level sets is $O_k(1)$. Slightly moving identical edges may increase the number of intersections, but not by more than a constant factor.
We conclude that 
\begin{equation}\label{eq:CrossingUpper}
\crs(G) = O_k(n^2t^2).
\end{equation}

We would like to apply Lemma \ref{le:CrossMult} to obtain a lower bound for $\crs(G)$. 
However, the multiplicity of some edges might be high, which  makes the lemma inefficient. 
To address this issue, we rely on the following lemma.
Recall that the \emph{perpendicular bisector} of two points $p,q\in\RR^3$ is the set of all points $a\in \RR^3$ that satisfy $|pa|=|qa|$.
Equivalently, it is the plane incident to the midpoint of $p$ and $q$ and orthogonal to the vector $\overrightarrow{p-q}$.

\begin{lemma} \label{le:RichBisectors}
For an integer $r\ge2$, let $T_r$ be the set of pairs $(H, e)$ such that $e = (u, v)\in E$, the plane $H\subset \RR^3$ is the perpendicular bisector of $u$ and $v$, and $H$ is incident to at least $r$ points of $\pts$.
If $e$ and $e'$ are parallel edges, then $(H, e)$ and $(H, e')$ represent two distinct pairs in $T_r$. 
Then,
\[ |T_r| = O_k\left(\frac{n^{3+\eps}t^{3/4}}{r^{9/2}}+ \frac{n^2t^{3/4}}{r^{2}} + nt\log n\right). \]
\end{lemma}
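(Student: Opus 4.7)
\medskip
\noindent\textbf{Proof plan.} The plan is to bound $|T_r|$ by partitioning the pairs according to the bisector plane $H$ and separately controlling, for each $H$, the number of multi-edges of $E$ that have $H$ as their bisector. Writing $|T_r|=\sum_{H} b(H)$, where the sum runs over $r$-rich perpendicular bisectors $H$ of pairs from $\pts$ and $b(H)$ is the number of multi-edges of $E$ with bisector $H$, I must bound each $b(H)$ and the number of relevant planes $H$, then combine.

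First I would control $b(H)$ for a fixed plane $H$. Let $\sigma_H$ denote the reflection of $\RR^3$ through $H$; any pair $(u,v)\in\pts^2$ with bisector $H$ satisfies $v=\sigma_H(u)$, so both $u,v$ lie on the variety $\gamma_H:=U\cap \sigma_H(U)$. If $\sigma_H$ is not a symmetry of $U$, then, because $U$ is irreducible and two-dimensional and $\sigma_H(U)\ne U$, $\gamma_H$ has dimension at most one and complexity $O_k(1)$. The edges of $E$ with bisector $H$ are therefore determined by pairs of $\pts$-points on $\gamma_H$ that are consecutive along some level set; Lemma~\ref{le:DDcurve} together with a double counting against the consecutive-on-level-set structure of $E$ would yield the per-plane bound.

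Second, I would count the $r$-rich planes $H$ themselves. The $r$ points of $\pts\cap H$ lie on the plane curve $U\cap H$ of degree $O_k(1)$ in $H\cong\RR^2$; the distances they span are a subset of the $t$ distinct distances in $\Delta(\pts)$. I would apply Corollary~\ref{co:SharirZahlDual} to the $3$-dimensional family of circles in $H$ centered at points of $\pts\cap H$, using the distinct-distance bound $t$ to control the number of rich circles, and then convert this into a bound on the number of rich bisector planes. The three regimes of Corollary~\ref{co:SharirZahlDual} correspond directly to the three terms in the claimed bound, and the exponent $t^{3/4}$ should emerge in the process of converting a point--circle incidence estimate into a bound on pairs $(p,e)$ with $p$ equidistant from the endpoints of $e$.

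Third, the exceptional case in which $\sigma_H$ \emph{is} a reflective symmetry of $U$ must be handled separately. By Lemma~\ref{le:parallelRef} no two such symmetry planes are parallel (otherwise $U$ would be cylindrical, hence ruled), and by Lemma~\ref{le:SmallAngle} two symmetry planes meeting at a small angle force $U$ to be a surface of revolution. Consequently, either $U$ has only $O_k(1)$ reflective symmetry planes---each contributing $O(n)$ edges and accounting for the last, trivial-looking $nt\log n$ term---or $U$ is a surface of revolution with a one-parameter family of such planes, all containing the axis. In the latter case I would use the decomposition of $U$ into circles orthogonal to the axis and invoke Theorem~\ref{th:CircDD}(b) to control the edges whose bisectors are axis-containing planes.

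The hard part will be the surface-of-revolution case: a one-parameter family of symmetry planes can contribute many edges, and the bookkeeping needed to feed the circle-to-circle distance estimate from Theorem~\ref{th:CircDD} into the overall count---while keeping the exponents of $n$, $t$, and $r$ matched to the other two regimes---is the most delicate step. Once all three regimes are assembled, the three terms of the stated bound follow.
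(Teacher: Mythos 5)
Your high-level plan --- partitioning $|T_r|$ over bisector planes $H$, controlling the per-plane contribution via $U\cap\sigma_H(U)$, and handling reflective-symmetry planes separately via Lemmas~\ref{le:parallelRef} and~\ref{le:SmallAngle} --- is the right skeleton, and your identification of the surface-of-revolution case as the hard one is also correct. But there are two places where the proposal is missing the ideas that actually produce the stated exponents.

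First, the application of Corollary~\ref{co:SharirZahlDual} is aimed at the wrong object. You propose applying it to circles inside a single plane $H$ centered at points of $\pts\cap H$. The quantity that must be bounded is the number of $r'$-rich bisector planes, and for that one needs an incidence bound between $\pts$ and a family of \emph{plane sections of $U$}. The paper sets this up by a generic rotation so every bisector has the form $ax+by+z=c$, views $\{U\cap H\}$ as a $3$-parameter family of degree-$O_k(1)$ curves, projects onto a generic plane via Lemma~\ref{le:projection}, and then applies Corollary~\ref{co:SharirZahlDual} with $s=3$; the lines contained in $U$ are treated separately via Lemma~\ref{le:Ruled}. Your circle-in-$H$ formulation does not produce a count of rich planes and would not yield the $n^{3+\eps}/(r')^{11/2}$ term.

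Second, and more critically, you do not exhibit the mechanism behind $t^{3/4}$. The paper dyadically decomposes by plane-richness (considering $2^j$-fixed planes for $j>\log\log n$) and then classifies each non-exceptional plane $H$ into four types according to the structure of $U\cap H(U)$: symmetry of $U$, few points, a rich circle, or a rich non-circle curve. The threshold $t^{3/4}$ is forced by two distance results: Theorem~\ref{th:CircDD}(b) shows two $\Omega(t^{3/4})$-rich circles that are neither aligned nor perpendicular give $\Omega(t)$ distances (a contradiction), and Theorem~\ref{th:DDcurveR3} (Raz) shows an $\Omega(t^{3/4})$-rich non-circle space curve gives $\Omega(t)$ distances. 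Coupled with the aligned/perpendicular bookkeeping in Lemma~\ref{le:AlignPerpInSurf}, this is what certifies that a non-exceptional $2^j$-fixed plane meets only $O_k(2^j t^{3/4})$ edges. Your proposal gestures at ``converting a point--circle incidence estimate into a bound on pairs $(p,e)$'' but does not identify this reduction, and therefore cannot reproduce the exponent. Relatedly, the $nt\log n$ term does not come from symmetry planes contributing $O(n)$ edges each; it arises from summing the $O_k(n/2^j)$ exceptional planes, each with the trivial bound $O_k(2^j t)$, over the $O(\log n)$ dyadic scales.
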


If vertices $u,v\in V$ have more than $r$ edges between them, then they are consecutive
on more than $r$ level sets. This in turn implies that the perpendicular bisector of $u$ and $v$ is
$r$-rich. 
We remove from $G$ all edges with multiplicity at least $r=\Theta(n^{2/9+\eps}t^{1/6})$.
By Lemma \ref{le:RichBisectors} and recalling that $t=O(n/\log n)$, we get that $|T_r|=O_k(n^{2-\eps})$.
Thus, we still have $|E|=\Theta(n^2)$.

After the above edge removal, every edge in $E$ has multiplicity $O(n^{2/9+\eps}t^{1/6})$.
Lemma \ref{le:CrossMult} implies 
\begin{equation*}
\crs(G) = \Omega_k\left(\frac{|E|^3}{n^{2/9+\eps}t^{1/6}|V|^2}\right) = \Omega_k\left(\frac{n^{34/9-\eps}}{t^{1/6}}\right).
\end{equation*}

Combining this bound with \eqref{eq:CrossingUpper} yields $t=\Omega_k(n^{32/39-\eps})$, as required.
To complete the proof, it remains to prove Lemma \ref{le:RichBisectors}.
We do so in the following section. 

\subsection{Proof of Lemma \ref{le:RichBisectors}} \label{ssec:RichEdgesLemma}

\parag{Rich bisectors.}
By performing a generic rotation of $\RR^3$ around the origin at the beginning of the proof of Theorem \ref{th:SurfDD}, we may assume the following.
For every pair of distinct points $(p,q)\in\pts^2$, the normal to the perpendicular bisector of $p$ and $q$ does not have a zero $z$-coordinate.
In other words, every such bisector can be written as $ax+by+z=c$ for some $a,b,c\in \RR$.

Consider the family of planes in $\RR^3$ defined as $ax+by+z=c$ with parameters $a,b,c\in \RR$.
Taking the intersection of every such plane with $U$ leads to a family $\family$ of varieties of dimension at most one and complexity $O_k(1)$.
An element in this family is defined by the three parameters $a,b,c$.
Since every two planes are either disjoint or intersect at a line, every two elements of $\family$ intersect either in $O_k(1)$ collinear points or in a line. 

For a line $\ell$ contained in the original surface $U$, let $\family_\ell$ be the set of varieties of $\family$ that contain $\ell$. 
Since these varieties are intersections of planes with $U$, the intersection of every two varieties of $\family_\ell$ is $\ell$. 
Recalling that no point of $\pts$ is on $\ell$, we observe that every point of $\pts$ is incident to at most one element of $\family_\ell$.
In other words, $I(\pts,\family_\ell) \le n$.

Let $\family_\text{line}$ be the set of elements of $\family$ that contain at least one line that is contained in $U$. 
Let $\family'$ be the set of elements of $\family$ that do not contain any line that is in $U$. 
That is, $\family' = \family \setminus \family_\text{line}$.
By Lemma \ref{le:Ruled}, the original surface $U$ contains $O_k(1)$ lines.
By combining this with the preceding paragraph, we obtain that $I(\pts,\family_\text{line})=O_k(n)$.
This implies that the number of $r'$-rich elements of $\family_\text{line}$ is $O_k(n/r')$.

We now study $I(\pts,\family')$. 
Since the elements of $\family'$ contain no lines, every two intersect in $O_k(1)$ points. 
We consider a projection of $\pts$ and $\family'$ onto a generic plane $\Pi$.
Let $\pts^*$ be the set of projections of the points of $\pts$ on $\Pi$.
Let $\family^*$ be the set of Zariski closures of the projections of the elements of $\family'$.
By Lemma \ref{le:projection}, the set $\family^*$ consists of varieties of dimension at most one and complexity $O_k(1)$.
Since $\Pi$ is chosen generically, we may assume that no two points of $\pts$ are projected into the same point, that every two elements of $\family'$ intersect in $O_k(1)$ points, and that $I(\pts,\family') = I(\pts^*,\family^*)$.

Recall that $\family$ is contained in a 3-parameter family.
This means that $\family^*$ is also contained in a 3-parameter family.
By thinking of $\Pi$ as $\RR^2$, we may apply Corollary \ref{co:SharirZahlDual} with $\family^*$.
By applying the corollary with $\eps$, $s=3$, and $\Theta(n)$ points, the number of $r'$-rich elements of $\family'$ is  
\begin{equation}\label{eq:Krich}
O_k\left(\frac{n^{3+\eps}}{(r')^{11/2}}+\frac{n^2}{(r')^3}+\frac{n}{r'}\right).
\end{equation}
 
Since the number of $r$-rich elements of $\family_\text{line}$ is $O(n/r')$, we get that
\eqref{eq:Krich} is also a bound for the number of $r'$-rich elements in $\family$.

\parag{Bounding $|T_r|$.}
Let $H$ be a plane incident to $r'$ points of $\pts$. 
Each of these $r'$ points defines at most $t$ level sets that contain at least one point of $\pts$.
By Theorem \ref{th:comps}, such a level set has $O_k(1)$ intersection points with $H$ (ignoring circles fully contained in $H$).
Since each such intersection point leads to at most one pair $(H,e)\in T_r$, we get that $H$ participates in $O_k(r't)$ such pairs. 

In the following, all logarithms have base 2.
For $j>\log\log n$, we consider planes that are $2^j$-rich with respect to $\pts$ but not $2^{j+1}$-rich. 
We refer to such a plane as $2^j$-\emph{fixed}. Below we prove that, excluding $O_k(n/2^j)$ planes, every $2^j$-fixed plane $H$ forms a pair $(H,e)\in T_r$ with $O_k(2^jt^{3/4})$ edges $e\in E$. We refer to the excluded planes as \emph{exceptional planes}.
We first assume that this claim holds and derive the assertion of the lemma.
Afterwards, we prove this claim.

For a positive integer $j$, let $\planes_j$ be the set of planes that are $2^j$-fixed. 
Setting $r'=2^j$ in \eqref{eq:Krich} leads to an upper bound for $|\planes_j|$. 
Combining the above and assuming that $r> \log n$ gives
\begin{align*}
|T_r| &\le \sum_{j=\log r}^{\log n} \left(|\planes_j|\cdot O\left(2^j\cdot t^{3/4}\right)+ O_k\left(\frac{n}{2^j}\cdot 2^jt\right)\right) = \sum_{j=\log r}^{\log n} O_k\left(\frac{n^{3+\eps}t^{3/4}}{2^{9j/2}}+\frac{n^2t^{3/4}}{2^{2j}}+nt\right) \\[2mm]
&\hspace{55mm} = \sum_{j=\log r}^{0.5\log n} O_k\left(\frac{n^{3+\eps}t^{3/4}}{2^{9j/2}}+\frac{n^2t^{3/4}}{2^{2j}}\right)+ \sum_{j=0.5\log n}^{\log n}\hspace{-3mm} O_k\left(nt\right) \\[2mm]
&\hspace{55mm} = O_k\left(\frac{n^{3+\eps}t^{3/4}}{r^{9/2}}+\frac{n^2t^{3/4}}{r^{2}} + nt\log n\right).
\end{align*}

It remains to prove the above claim about the non-exceptional planes.

\parag{Non-exceptional planes.}
Let us recall the details of what we need to prove. 
For $j>\log\log n$, we need to show that, excluding $O_k(n/2^j)$ exceptional planes, every $2^j$-fixed plane $H$ forms a pair $(H,e)\in T_r$ with $O_k(2^jt^{3/4})$ edges $e\in E$. 

Let $H$ be a $2^j$-fixed plane. 
For a point $p\in \RR^3$, let $H(p)$ be the reflection of $p$ about $H$. 
Let $\pts_H$ be the set of points participating in an edge that forms a pair with $H$. Note that $\pts_H$ is contained in $U \cap H(U)$.
We define the \emph{type} of $H$  according to the reflection that it induces.
\begin{itemize}
\item The plane $H$ is \emph{Type 1} if $U=H(U)$ (that is, if $H(\cdot)$ is a symmetry of $U$).
\item The plane $H$ is \emph{Type 2} if $U\cap H(U)$ contains $O_k(t^{3/4})$ points of $\pts$.
\item The plane $H$ is \emph{Type 3} if a circle in $U\cap H(U)$ contains $\Omega_k(t^{3/4})$ points of $\pts$. 
\item The plane $H$ is \emph{Type 4} if a non-circle component of $U\cap H(U)$ contains $\Omega_k(t^{3/4})$ points of $\pts$. 
\end{itemize}

Since $U\cap H(U)$ consists of  $O_k(1)$ components, $H$ must have one of the four types.
If $H$ satisfies the conditions of more than one case, then the type of $H$ is the first case satisfied. 
For example, if the reflection $H(\cdot)$ is a symmetry of $U$ and $U\cap H(U)$ contains a circle with $\Omega_k(t^{3/4})$ points of $\pts$, then $H$ is Type 1 rather than Type 3. We study the number of planes of each type separately.

\parag{Type 1 planes.}
We begin with the special case where $U$ is a surface of revolution.
Since $U$ is neither a plane nor a sphere, Lemma \ref{le:OneAxis} implies that $U$ has a unique axis $\ell$.
Any Type 1 plane is either orthogonal to $\ell$ or contains $\ell$. 
Since $U$ is not ruled, it is not a cylindrical surface.
By Lemma \ref{le:parallelRef}, at most one Type 1 plane is orthogonal to $\ell$.
The intersection of any two planes that contain $\ell$ is $\ell$. 
Since $|\ell\cap \pts| = O_k(1)$ and $j>\log\log n$, the number of  $2^j$-fixed planes that contain $\ell$ is $O(n/2^j)$. 
We conclude that the number of exceptional planes of Type 1 is $O(n/2^j)$.

Next, consider the case where $U$ is not a surface of revolution.
Let $H_1$ and $H_2$ be distinct planes that correspond to reflectional symmetries of $U$. 
By Lemma \ref{le:parallelRef}, the planes $H_1$ and $H_2$ are not parallel. 
Assume that the angle between $H_1$ and $H_2$ is smaller than some sufficiently small constant $\alpha$.
Then $H_1\circ H_2$ is a rotation  of sufficiently small angle. 
Lemma \ref{le:SmallAngle} implies that $U$ is a surface of revolution.
This contradiction implies that the angle between $H_1$ and $H_2$ is at least $\alpha$.

Let $\planes$ be a set of planes in $\RR^3$, such that no two are parallel and no two form an angle smaller than $\alpha$. 
We claim that $|\planes|=O_k(1)$.
Indeed, let $S\subset \RR^3$ be the unit sphere centered at the origin. 
For every plane $H\in \planes$, we place a point $p_H$ on $S$  so that the vector from the origin to $p_H$ has the same direction as the normal of $H$.
By the above, there exists a minimum distance $\alpha$ between any two points on $S$.
In other words, when placing an open surface patch of radius $\alpha/2$ centered at each point, no two  patches intersect. 
Since the surface area of $S$ is constant and the surface area of each patch is constant, there could be $O_k(1)$ such patches.
We conclude that $|\planes|=O_k(1)$.
That is, in this case the number of Type 1 planes is $O_k(1)$. 

\parag{Type 2 planes.} Let $e=(u,v)$ be an edge satisfying $(H,e)\in T_r$.
Then $H(u)=v$ and $H(v)=u$.
Since each point on $H$ leads to at most one level set containing both $u$ and $v$, there are $O(2^j)$ edges parallel to $e$. 
We conclude that the number of edges that form a pair with $H$ in $T_r$ is $O(|\pts_H|2^j)$.
In particular, if $H$ is Type 2 then $H$ participates in $O_k(2^jt^{3/4})$ pairs $(H, e) \in T_r$. That is, Type 2 planes are not exceptional.

\parag{Type 3 planes.}
By Theorem \ref{th:CircDD}, if there exist two $\Omega_k(t^{3/4})$-rich circles that are neither perpendicular nor aligned, then $D(\pts)>t$. 
Since this is a contradiction to the definition of $t$, we may assume that no  two such circles exist.

We simultaneously handle the case where $U$ is a surface of revolution and the case where it is not. 
In the former case, we first ignore circles that have the same axis as $U$. 
By Lemma \ref{le:AlignPerpInSurf}, the number of other circles that are $\Omega_k(t^{3/4})$-rich is $O_k(1)$.
We refer to those as \emph{exceptional circles}. 
After concluding our study of the exceptional circles we return to the ignored circles.

For every pair of distinct exceptional circles $(C_1,C_2)$, at most one reflection of $\RR^3$ takes $C_1$ to $C_2$.
Thus, distinct pairs of exceptional circles lead to $O_k(1)$ exceptional planes. However, there may also exist exceptional planes that take an exceptional circle to itself.

Consider an exceptional circle $C$ and let $\ell$ be the axis of $C$.
Then all planes that takes $C$ to itself contain $\ell$ (we may ignore the plane that contains $C$).
Since $|\ell\cap \pts| = O_k(1)$ and $j>\log\log n$, the number of  $2^j$-fixed planes that contain $\ell$ is $O(n/2^j)$. 
By summing this over all $O_k(1)$ exceptional circles, we obtain $O_k(n/2^j)$ exceptional planes.

We now consider the circles that were ignored above. 
To distinguish such circles from the above exceptional circles, we refer to them as \emph{ignored circles}.
That is, we are in the case where $U$ is a surface of revolution with axis $\ell$ and consider $\Omega(t^{3/4})$-rich circles with axis $\ell$.

Every plane that maps an ignored circle to itself also contains $\ell$.
By repeating the above argument, we get $O_k(n/2^j)$ such $2^j$-fixed planes. 
Every plane that maps one ignored circle to another is orthogonal to $\ell$.
Since such planes are disjoint, there are $O(n/2^j)$ such $2^j$-fixed planes.

It remains to consider planes that map an ignored circle to an exceptional circle. 
By Theorem \ref{th:CircDD}, we may assume that all exceptional circles are perpendicular to all ignored circles. 
If there is more than one ignored circle, then no circle is perpendicular to both.
If there is a single ignored circle, then $O_k(1)$ planes take it to exceptional circles. 

\parag{Type 4 planes.} To analyse this case, we rely on the following special case of a theorem by Raz \cite{Raz20}.

\begin{theorem} \label{th:DDcurveR3}
Let $\gamma\subset \RR^3$ be an irreducible variety of dimension one and constant complexity that is neither a line nor a circle.
Let $\pts\subset \gamma$ be a set of $n$ points.
Then $D(\pts)=\Omega(n^{4/3})$.
\end{theorem}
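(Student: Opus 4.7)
The plan is to express $D(\pts)$ as the image size of an expanding polynomial on a curve and then invoke the real Elekes-Szab\'o-type theorem of Raz \cite{Raz20}.

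First, I would rationally parametrize $\gamma$ (or an irreducible component of its complexification) by a single variable $t$, so that the squared Euclidean distance between two points with parameters $s$ and $t$ becomes an algebraic function $F(s,t)$. If $S$ denotes the finite parameter set corresponding to $\pts$, then $D(\pts) = |F(S\times S)|$ up to an additive $O_k(1)$ error coming from the finitely many parameters where the parametrization fails to be injective or regular. Since $\gamma$ has constant complexity, $F$ has bounded degree, so controlling $|F(S\times S)|$ reduces to a standard bipartite expansion problem on $S$.

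Next, I would apply the real Elekes-Szab\'o dichotomy for two-variable expanding polynomials on algebraic curves: either $|F(S\times S)| = \Omega(|S|^{4/3})$, or $F$ admits a \emph{group-like} representation, meaning there exist injective 1-variable algebraic substitutions reducing $F$ to the additive or multiplicative form associated with a 1-dimensional algebraic group law. This dichotomy, in the precise form needed here, is exactly what Raz establishes for the squared-distance polynomial on a curve in $\RR^3$.

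The main obstacle is ruling out the group-like case for all $\gamma$ except lines and circles. Heuristically, a group-like decomposition for the squared-distance polynomial on $\gamma\times\gamma$ forces $\gamma$ to be invariant under a 1-parameter subgroup of the Euclidean motion group of $\RR^3$ that acts compatibly with distances; the only irreducible one-dimensional real varieties invariant under such a subgroup are lines (preserved by translations along themselves) and circles (preserved by rotations about their axis). I would quote this classification from Raz's paper rather than reprove it. Since by hypothesis $\gamma$ is neither a line nor a circle, the group-like alternative is excluded, and the first alternative yields $D(\pts) = \Omega(n^{4/3})$, as required.
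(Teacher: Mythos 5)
The paper does not supply a proof of this statement at all: it is quoted verbatim as ``a special case of a theorem by Raz \cite{Raz20},'' and your sketch is essentially an outline of what happens in that cited paper, so in substance the two approaches coincide (both defer to Raz for the hard classification). One concrete gap in your write-up, though: the opening step, ``rationally parametrize $\gamma$ by a single variable,'' is not available in general. An irreducible one-dimensional variety in $\RR^3$ of bounded complexity can have positive genus, in which case it admits no global rational parametrization (over $\RR$ or over $\CC$), so the squared-distance function does not become a single bivariate algebraic function $F(s,t)$ on all of $\gamma\times\gamma$. The standard fix---and the one used in the Elekes--Szab\'o/Raz--Sharir--de~Zeeuw framework, as in the paper's own Lemma~\ref{le:SpecialorDist}---is to avoid a global parametrization: project to a generic coordinate, form the three-variable polynomial $F(x,y,\Delta)$ whose zero set records ``$\Delta$ is a squared distance between a point of $\gamma$ with first coordinate $x$ and a point with first coordinate~$y$,'' and invoke the dichotomy of Theorem~\ref{th:RSdZ}, working with local analytic charts rather than a rational one. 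The remaining and genuinely hard step---showing that the ``special form'' alternative forces $\gamma$ to be a line or a circle---you correctly attribute to Raz, which is exactly what the paper does as well.
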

Assume that there exists a plane $H$ of Type 4. By Theorem \ref{th:DDcurveR3}, the points on $U\cap H(U)$ determine $\Omega_k(t)$ distinct distances. 
Taking a sufficiently large constant in the definition of Type 4, we obtain that the number of distances is larger than $t$. 
This contradiction implies that no plane is of Type 4.  

By considering all the above cases, we conclude that the total number of exceptional planes is $O_k(n/2^j)$. 
This completes the proof of Lemma \ref{le:RichBisectors}, which in turn completes the proof of Theorem \ref{th:SurfDD}.
\end{proof}

\section{Few distinct distances between two circles} \label{sec:ProofA}

We now present constructions that have few distinct distances between two circles. These constructions are based on the notions of aligned and perpendicular circles, as defined in the introduction.
\vspace{2mm}

\noindent {\bf Theorem \ref{th:CircDD}(a).} 
\emph{Let $\crc_1$ and $\crc_2$ be two circles in $\RR^3$ that are either aligned or perpendicular.
Then there exist a set $\pts_1\subset \crc_1$ of $m$ points and a set $\pts_2\subset \crc_2$ of $n$ points, such that}
\[ D(\pts_1,\pts_2) = \Theta(m+n). \]

\begin{proof}
Without loss of generality, assume that $n\ge m$. Since rotations, translations, and uniform scalings of $\RR^3$ do not affect the size of $D(\pts_1,\pts_2)$, we may assume that
$\crc_1$ is the unit circle centered at the origin and contained in the $xy$-plane.
We parametrize $\crc_1$ as
\begin{equation} \label{eq:FirstCirc}
 \left\{ (t,\pm \sqrt{1-t^2},0)\ :\ -1\le t\le 1 \right\}.
 \end{equation}

\parag{The aligned case. }
We first consider the case where $\crc_1$ and $\crc_2$ are aligned.
In this case, after the above transformations, $\crc_2$ is contained in a plane of the form $z=c$ and centered at $(0, 0, c)$.
If $z=0$ then, to be aligned, the circles must be concentric. 
We may then use the construction presented in the introduction, obtaining $D(\pts_1,\pts_2)= \Theta(m+n)$. 
For $z\neq 0$, when projecting $\crc_2$ onto the $xy$-plane, we get two concentric circles.
By Lemma \ref{le:parallelPlanes}, we may again use the construction from the introduction (with the appropriate $z$-coordinates). 

\parag{The perpendicular case. }
We now consider the case where $\crc_1$ and $\crc_2$ are perpendicular.
In this case, the plane containing $\crc_2$ is incident to the origin and the center of $\crc_2$ is incident to the $xy$-plane.
By rotating around the $z$-axis, we may assume that $\crc_2$ is contained in the $xz$-plane.
Note that the center of $\crc_2$ is on the $x$-axis, and denote this center as $(-a,0,0)$.
Let $r$ denote the radius of $\crc_2$ (after the above scaling).
We parametrize $\crc_2$ as
\begin{equation} \label{eq:SecondCirc}
\left\{ \left(rs-a,0,\pm r\sqrt{1-s^2}\right) \ :\  -1\le s \le 1 \right\}.
\end{equation}

We arbitrarily choose $b\in (a,a+\min\{1,r\})$ and $\beta\in ((a/b)^{1/n},1)$.
We then consider the point sets
\begin{align*}
\pts_1 &= \left\{ \left(b\cdot\beta^j -a,\sqrt{1-(b\cdot\beta^j -a)^2},0\right)\ :\ 0\le j \le m-1\right\}, \\[2mm]
\pts_2 &= \left\{ \left(-b\cdot\beta^k,0,r\sqrt{1-\left(\frac{a-b\cdot\beta^k}{r}\right)^2}\right)\ :\ 0\le k \le n-1\right\}.
\end{align*}

To see that the points of $\pts_1$ lie on $\crc_1$, set $t=b\cdot\beta^j -a$ in \eqref{eq:FirstCirc}.
Since $n\ge m$, we have that $\beta\in ((a/b)^{1/m},1)$, which implies that $t>0$.
Since $b< a+1$, we get that $t< 1$.
Similarly, to see that the points of $\pts_2$ lie on $\crc_2$, set $s=\frac{a-b\cdot\beta^k}{r}$ in \eqref{eq:SecondCirc}.
Since $\beta\in ((a/b)^{1/n},1)$, we have that $s< 0$.
Since $b<a+r$, we have that $s>-1$.

The square of the distance between a point of $\pts_1$ and a point of $\pts_2$ is
\begin{align}
( b\cdot\beta^j -a+b\cdot\beta^k)^2 + &\left(1-(b\cdot\beta^j -a)^2\right) + \left( r^2-r^2\left(\frac{a-b\cdot\beta^k}{r}\right)^2\right) \nonumber\\
&= b^2\beta^{2j}+a^2+b^2\beta^{2k}-2ab\beta^j-2ab\beta^k+2b^2\beta^{j+k} \nonumber \\
&\hspace{22mm} 
+ 1-b^2\beta^{2j}-a^2+2b\beta^ja +  r^2-a^2-b^2\beta^{2k}+2ab\beta^k \nonumber\\
&=2b^2\beta^{j+k}  + 1 +  r^2-a^2. \label{eq:MultForm}
\end{align}

The only part of the above expression that depends on the choice of $j$ and $k$ is $\beta^{j+k}$.
Thus, there are exactly $m+n-1$ distinct differences in $\pts_1\times\pts_2$.
This completes the proof of part (a) of Theorem \ref{th:CircDD}.
\end{proof}

\section{Many distances between two circles}\label{sec:ProofB}

We now study distinct distances between circles that are neither aligned nor perpendicular. 
\vspace{2mm}

\noindent {\bf Theorem \ref{th:CircDD}(b).}
\emph{Let $\crc_1$ and $\crc_2$ be two circles in $\RR^3$ that are neither aligned nor perpendicular.
Let $\pts_1\subset \crc_1$ be a set of $m$ points and let $\pts_2\subset \crc_2$ be a set of $n$ points.
Then}
\[ D(\pts_1,\pts_2) = \Omega\left(\min\left\{m^{2/3}n^{2/3},m^2,n^2\right\}\right). \]
\begin{proof}
Since rotations, translations, and uniform scalings of $\RR^3$ do not affect $D(\pts_1,\pts_2)$, we may assume that
$\crc_1$ is the unit circle centered at the origin and contained in the $xy$-plane. Since a rotation around the $z$-axis takes $\crc_1$ to itself, we may further assume that the $y$-coordinate of the center of $\crc_2$ is 0.

We now assume that the plane containing $\crc_2$ is not parallel to the $xy$-plane and to the $xz$-plane. 
Throughout the proof we ignore several other special cases.
These special cases are handled after the general case, in Appendix \ref{app:analysis}.

\parag{Parametrizing the circles.} 
We parametrize $\crc_1$ as
\begin{equation}\label{eq:FirstCircParam}
\gamma_1(s) = \left\{\left(\frac{2s}{1+s^2}, \frac{1-s^2}{1+s^2},0\right)\ :\ s \in \RR \right\}.
\end{equation}
This parametrization overlooks one point of $\crc_1$, corresponding to the case where $s\to \infty$.
If this missing point is in $\pts_1$, then we remove it from $\pts_1$.

Let $c=(p,0,q)$ be the center of $\crc_2$ and let $r$ be the radius of $\crc_2$.
Let $H$ be the plane containing $\crc_2$ and let $H_0$ be the translation of $H$ that contains the origin.
For any two orthogonal unit vectors $V_1$ and $V_2$ that span $H_0$ (in other words, orthogonal vectors that span the set of directions of $H$), we can parametrize $\crc_2$ as
\[ \{ c+r\cdot V_1 \cdot  \cos \theta + r\cdot V_2 \cdot \sin\theta\ :\ \theta\in [0,2\pi) \}. \]

Let $V'_1$ be a unit vector contained in the intersection of $H_0$ and the $xz$-plane.
Similarly, let $V'_2$ be a unit vector contained in the intersection of $H_0$ and the $xy$-plane.
We assume that $H$ does not contain lines parallel to the $x$-axis.
This implies that $V'_1\neq V'_2$.
There exist $0 < \alpha,\beta < \pi$ such that
\begin{equation} \label{eq:V1V2} 
V'_1 = \left(\cos \alpha, 0, \sin \alpha\right) \quad \text{ and } \quad V'_2 = \left(\cos \beta, \sin \beta, 0 \right) 
\end{equation}

Note that $\beta$ is the angle between the $x$-axis and the line $H_0\cap \vb(z)$ (both lines are in the $xy$-plane).
Similarly, $\alpha$ is the angle between the $x$-axis and the line $H_0\cap \vb(y)$. 
The unit vectors $V'_1$ and $V'_2$ span $H_0$, but they might not be orthogonal.
The following replaces $V'_2$ with a unit vector in $H_0$ that is orthogonal to $V'_1$.
First consider 
\[ \vec{n} = V'_1 \times V'_2 = \left(-\sin \alpha \cdot \sin \beta,\ \sin \alpha \cdot \cos \beta,\ \cos \alpha \cdot \sin \beta\right). \]
Note that $\vec{n}$ is orthogonal to $H_0$.

We next consider
\[ V''_2 = \vec{n} \times V'_1 = \left(\sin^2 \alpha \cdot \cos \beta,\ \sin \beta,\ -\sin \alpha \cdot \cos \alpha \cdot \cos \beta\right). \]
We rename $V'_1$ and $V''_2$ as $V_1$ and $V_2$, respectively.
While $V_2$ is contained in $H_0$ and orthogonal to $V_1$, it might not be a unit vector.
To normalize $V_2$, we find
\begin{align*}
\|V_2 \| = \big(\sin^4 \alpha \cdot \cos^2 \beta + \sin^2 \beta &+ \sin^2 \alpha \cdot \cos^2 \alpha \cdot \cos^2 \beta\big)^{1/2} = \left(\sin^2\alpha \cdot \cos^2 \beta + \sin^2 \beta\right)^{1/2}.
\end{align*}

Combining the above leads to
\begin{align*}
\crc_2 &= \left\{c + r\cdot \cos\theta \cdot V_1 + r \cdot \sin \theta\cdot \frac{V_2}{\|V_2\|} \ :\ 0\le \theta< 2\pi \right\}
\end{align*}

As with $\crc_1$, we parametrize $\crc_2$ as
\begin{equation}\label{eq:SecondCircParam}
\gamma_2(t) = \left\{c + r\cdot  \frac{1-t^2}{1+t^2}\cdot V_1 + r\cdot \frac{2t}{1+t^2}\cdot \frac{V_2}{\|V_2\|} \ :\ t \in \RR \right\}.
\end{equation}
As before, the parametrization overlooks one point of $\crc_2$, corresponding to the case where $t\to \infty$.
If this point is in $\pts_2$, then we remove it from $\pts_2$.

\parag{Studying the distance function.}
To recap, in \eqref{eq:FirstCircParam} and \eqref{eq:SecondCircParam} we parametrized the two circles, possibly excluding one point from $\pts_1$ and another from $\pts_2$. 
We denote by $\gamma_{i,x}(s)$  the $x$-coordinate of $\gamma_i(s)$, and similarly for $\gamma_{i,y}(s)$ and $\gamma_{i,z}(s)$.
Let $\rho(s,t)$ be the square of the distance between $\gamma_1(s)$ and $\gamma_2(t)$.
That is,
\[ \rho(s,t) = (\gamma_{1,x}(s)-\gamma_{2,x}(t))^2 + (\gamma_{1,y}(s)-\gamma_{2,y}(t))^2+ (\gamma_{1,z}(s)-\gamma_{2,z}(t))^2. \]

The following is a bipartite variant of a result of Raz \cite{Raz20}.
The proof can be seen as a simple variant of the one in \cite{Raz20}.
Recall that a real function is \emph{analytic} if it has derivatives of every order and agrees with its Taylor series in a neighborhood of every point.

\begin{lemma} \label{le:SpecialorDist}
With the above definitions, at least one of the following holds:
\begin{itemize}
\item $D(\pts_1,\pts_2) = \Omega\left(\min\left\{m^{2/3}n^{2/3},m^2,n^2\right\}\right)$.
\item For $j\in \{1,2,3\}$, there exist an open interval $I_j \subset \RR$ and an analytic function $\varphi_j: I_j \to \RR$ with an analytic inverse, that satisfy the following.
For every $s\in I_1$ and $t\in I_2$, we have that $\rho(s,t) = \varphi_1(\varphi_2(s)+\varphi_3(t))$. \end{itemize}
\end{lemma}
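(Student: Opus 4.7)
The plan is to apply a bipartite Elekes--R\'onyai style expansion theorem to the squared-distance function $\rho(s,t)$, following the strategy of Raz \cite{Raz20}. First observe that the parametrizations $\gamma_1(s),\gamma_2(t)$ are rational with denominators $1+s^2,\,1+t^2$ which never vanish on $\RR$, so $\rho$ is real-analytic on all of $\RR^2$. Let $S\subset \RR$ be the parameter set of $\pts_1$ and $T\subset \RR$ the parameter set of $\pts_2$, so $|S|=m$ and $|T|=n$ after discarding the (at most one) point from each circle that is missed by the parametrization. Then $D(\pts_1,\pts_2)=|\rho(S\times T)|$ up to an additive $O(1)$ term, so it suffices to lower bound $|\rho(S\times T)|$.

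Next I would invoke the bipartite Elekes--R\'onyai dichotomy for real-analytic two-variable functions: either
\[ |\rho(S\times T)| \;=\; \Omega\bigl(\min\{m^{2/3}n^{2/3},\,m^2,\,n^2\}\bigr), \]
which is exactly the first branch of the lemma, or there exist open intervals $J,J'\subset \RR$ and analytic functions $f,g,h$, each with an analytic inverse, such that on $J\times J'$ either
\[ \rho(s,t) \;=\; f\bigl(g(s)+h(t)\bigr) \qquad\text{or}\qquad \rho(s,t) \;=\; f\bigl(g(s)\,h(t)\bigr). \]
In the additive case I would read off the required data directly by setting $\varphi_2=g,\ \varphi_3=h,\ \varphi_1=f$ and taking $I_1,I_2,I_3$ to be $J,J'$ and an open interval containing $g(J)+h(J')$.

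The multiplicative case reduces to the additive one by a standard logarithmic change of variables. Since $g$ and $h$ have analytic inverses they are strictly monotonic, hence of constant sign on sub-intervals $J_0\subseteq J,\ J_0'\subseteq J'$. On $J_0\times J_0'$ one then replaces $g$ by $\tilde g=\log|g|$, $h$ by $\tilde h=\log|h|$, and $f$ by $\tilde f(u)=f(\pm e^u)$ with the sign matching that of $g\,h$, producing $\rho(s,t)=\tilde f\bigl(\tilde g(s)+\tilde h(t)\bigr)$ on $J_0\times J_0'$. The analytic invertibility of $\tilde g,\tilde h,\tilde f$ is inherited from that of $g,h,f$, which yields the desired decomposition.

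The main technical obstacle is invoking the Elekes--R\'onyai type theorem in the analytic/rational setting rather than the polynomial one. This is precisely what Raz establishes in \cite{Raz20} by reducing the statement to an Elekes--Szab\'o theorem on the three-dimensional variety $\{(s,t,u)\in\RR^3 : u=\rho(s,t)\}$; the bipartite variant differs from Raz's original setting only in which parameter varies over which finite set, not in any geometric content, so the same argument applies. A secondary issue is ruling out the trivially degenerate cases in which $\rho$ depends on only one variable (which would allow the decomposition with constant $\varphi_2$ or $\varphi_3$, violating the analytic-inverse requirement); this is handled by the explicit form of $\rho$ and the special-case analysis deferred to Appendix~\ref{app:analysis}.
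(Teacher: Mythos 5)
Your proposal captures the essential strategy the paper uses, namely reducing the problem to an Elekes--Szab\'o type dichotomy for a polynomial variety and then extracting the additive decomposition, but it differs in two noteworthy ways. First, the variety: you apply the dichotomy to the graph $\{(s,t,u): u = \rho(s,t)\}$ in the parameter coordinates, whereas the paper instead builds a variety $U\subset\RR^7$ from the circle and distance equations, projects it onto the $x$-coordinates and squared distance, $(p_x,q_x,\Delta)$, applies Theorem~\ref{th:RSdZ} to the resulting polynomial $F$, and only afterwards pulls back via $\gamma_{1,x},\gamma_{2,x}$ (checking that these are locally analytically invertible). Your choice arguably avoids that pullback step, but then you must clear denominators to get a polynomial, take an irreducible factor, and verify the nondegeneracy hypotheses of Theorem~\ref{th:RSdZ} --- details you gloss over. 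Second, you invoke a dichotomy with both an \emph{additive} and a \emph{multiplicative} special form, then reduce the multiplicative case via a logarithmic change of variables; this matches the classical Elekes--R\'onyai formulation for polynomials. The paper uses the analytic Elekes--Szab\'o formulation (Theorem~\ref{th:RSdZ}, case (ii)), in which the special form is $\psi_1(x)+\psi_2(y)+\psi_3(z)=0$ with \emph{analytic} $\psi_j$; this single form already absorbs the multiplicative case, making your log reduction unnecessary. Also be careful with the phrase ``bipartite Elekes--R\'onyai dichotomy for real-analytic two-variable functions'' --- no such off-the-shelf theorem exists for arbitrary analytic functions; the result you actually need is a polynomial/variety statement, and your reliance on $\rho$ being rational is essential (it is what lets you pass to a polynomial). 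With these points tightened, your route gives a valid, if slightly redundant, alternative proof.
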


Note that \eqref{eq:MultForm} is an example of $\rho(s,t) = \varphi_1(\varphi_2(s)+\varphi_3(t))$ in the perpendicular case.

\begin{proof}[Proof of Lemma \ref{le:SpecialorDist}.] 
The proof is based on the following result from Raz, Sharir, de Zeeuw \cite[Sections 2.1 and 2.3]{RSdZ16}.
See also Raz \cite[Lemma 2.4]{Raz20}.
 
\begin{theorem} \label{th:RSdZ}
Let $F \in \RR[x, y, z]$ be a constant-degree irreducible polynomial, such that none of the three first partial derivatives of $F$ is identically zero.
Then at least one of the following two cases holds.
\begin{enumerate}[label =(\roman*)]
    \item For all $A, B \subset \RR$ with $|A| = m$ and $|B| = n$, we have
    \begin{align*}
    \hspace{-11mm} \big|\big\{(a,a',b,b')\in A^2 \times B^2\ :\ \text{ exists } c\in \RR \text{ such that }  F(a&,b,c)=F(a',b',c)=0 \big\}\big| \\[2mm]
    &= O\left(m^{4/3}n^{4/3}+m^2+n^2\right).
    \end{align*}
    \item There exists a one-dimensional subvariety $Z^* \subset \vb(F)$ of complexity $O(1)$ such that every $p \in  \vb(F) \setminus Z^*$ satisfies the following:
    For each $j\in \{1,2,3\}$ there exist an open interval $I_j^{p} \subset \RR$ and a real analytic function  $\psi_j^{p}:I_j^{p}\to \RR$ with an analytic inverse such that $p \in I_1^{p}\times I_2^{p}\times I_3^{p}$ and for all $(x, y, z) \in I_1^{p}\times I_2^{p}\times I_3^{p}$ we have
    \[ (x, y, z) \in \vb(F) \quad \text{ if and only if } \quad \psi_1^{p}(x) + \psi_2^{p}(y) + \psi_3^{p}(z) = 0. \]
\end{enumerate}
\end{theorem}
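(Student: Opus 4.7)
The strategy is the standard Elekes--Szab\'o dichotomy: convert the quadruple count into a point--curve incidence problem on the grid $A\times B$, then either apply an incidence bound (case (i)) or extract the structural obstruction to that bound (case (ii)). For each $(a_0,b_0)\in\RR^2$ I would form the elimination curve
\[ \Gamma_{a_0,b_0} \;=\; \vb\!\bigl(\operatorname{Res}_c\bigl(F(a_0,b_0,c),\,F(x,y,c)\bigr)\bigr) \;\subset\;\RR^2, \]
a plane curve of degree $O(\deg F)$ whose real points are exactly those $(x,y)$ for which $F(a_0,b_0,c)=F(x,y,c)=0$ has a common root $c$. The non-vanishing of each partial derivative of $F$ ensures that this resultant is not identically zero, and the irreducibility of $F$ controls its factorization. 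Since each quadruple counted in (i) admits at most $\deg F$ admissible values of $c$, the quantity to be bounded in (i) equals $\Theta(1)\cdot I\bigl(A\times B,\{\Gamma_{a,b} : (a,b)\in A\times B\}\bigr)$, a point--curve incidence count with $mn$ points and $mn$ curves drawn from a $2$-parameter family of constant-degree plane curves.

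Next I would invoke a Pach--Sharir (or Sharir--Zahl) incidence bound for points and curves from a bounded-dimensional family, which gives
\[ I(P,\mathcal{C}) \;=\; O\!\bigl(|P|^{2/3}|\mathcal{C}|^{2/3}+|P|+|\mathcal{C}|\bigr) \;=\; O\!\bigl(m^{4/3}n^{4/3}+m^2+n^2\bigr), \]
yielding case (i), provided the standard non-degeneracy hypothesis holds: any two distinct curves in the family share only $O(1)$ common points. Curves violating this hypothesis can be grouped and absorbed into the $O(m^2+n^2)$ slack terms, as long as their total contribution is small.

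The main obstacle, and the source of case (ii), is when a positive-dimensional family of pairs $\bigl((a_0,b_0),(a_0',b_0')\bigr)\in\RR^2\times\RR^2$ produces curves $\Gamma_{a_0,b_0}$ and $\Gamma_{a_0',b_0'}$ sharing a common one-dimensional component. Encoding these ``coincident pairs'' as a constructible subset of $\RR^4$ and applying a parameter-count / pigeonhole argument (using that $F$ has bounded degree), one shows that through a Zariski-open subset of $\vb(F)$ the three coordinate foliations -- obtained by fixing $x$, $y$, and $z$ in turn -- must be intertwined by a one-parameter family of algebraic symmetries. The hard part is converting this algebraic coincidence into the analytic additive form claimed in (ii). At any regular $p=(x_0,y_0,z_0)\in\vb(F)$ (so, away from the singular locus and the critical points of the three coordinate projections), the implicit function theorem expresses $z$ locally as an analytic function of $(x,y)$ on $\vb(F)$; differentiating the coincidence relation and separating variables along two transversal foliations yields, after integration, analytic diffeomorphisms $\psi_j^p\colon I_j^p\to\RR$ satisfying $\psi_1^p(x)+\psi_2^p(y)+\psi_3^p(z)=0$ on $\vb(F)\cap (I_1^p\times I_2^p\times I_3^p)$. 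The exceptional one-dimensional subvariety $Z^*$ collects the bad locus where this local argument fails: the singular locus of $F$, the critical points of the coordinate projections, and the branch locus of the emerging $\psi_j^p$; that $Z^*$ has dimension at most one (and constant complexity) follows from Lemma \ref{le:singular} applied to each defining condition.
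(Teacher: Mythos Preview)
The paper does not prove Theorem~\ref{th:RSdZ}; it quotes it verbatim as a known result from Raz, Sharir, and de Zeeuw \cite[Sections 2.1 and 2.3]{RSdZ16} (see also \cite[Lemma 2.4]{Raz20}) and then applies it as a black box inside the proof of Lemma~\ref{le:SpecialorDist}. So there is no ``paper's own proof'' to compare your attempt against.

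For what it is worth, your sketch is broadly in the spirit of the actual argument in \cite{RSdZ16}: one does reduce the quadruple count to an incidence problem between the grid $A\times B$ and a two-parameter family of constant-degree plane curves, and one does obtain case~(i) from a Pach--Sharir type bound whenever the curves satisfy the usual $O(1)$-pairwise-intersection hypothesis. The part you gloss over---converting the failure of that hypothesis into the additive form $\psi_1(x)+\psi_2(y)+\psi_3(z)=0$ on a neighborhood of a generic point of $\vb(F)$---is exactly where the substantial work lies in \cite{RSdZ16}, and your one-paragraph description (``differentiating the coincidence relation and separating variables along two transversal foliations'') would not be accepted as a proof without considerably more detail. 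In particular, deducing from a positive-dimensional family of coincident curve pairs that the surface carries a local additive structure requires a careful analysis that goes well beyond the implicit function theorem; this is the heart of the Elekes--Szab\'o machinery. If you intend to supply a self-contained proof rather than a citation, that step needs to be fleshed out.
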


We now briefly sketch the remainder of the proof of Lemma \ref{le:SpecialorDist}. We first construct a variety $U\subset \RR^3$ that in some way describes the distances between $C_1$ and $C_2$.
We then apply Theorem \ref{th:RSdZ} on  the polynomial $f\in \RR[x,y,z]$ that generates ${\bf I}(f)$. 
We show that, when we are in case (i) of the theorem, $D(\pts_1,\pts_2)$ is large.
Finally, we  show that case (ii) of the theorem implies $\rho(s,t) = \varphi_1(\varphi_2(s)+\varphi_3(t))$ as in the statement of Lemma \ref{le:SpecialorDist}.

It is not difficult to show that any circle in $\RR^3$ is the zero-set of a polynomial of degree four. 
Let $f_1,f_2\in \RR[x,y,z]$ be polynomials of degree four that satisfy $\crc_1=\vb(f_1)$ and $\crc_2=\vb(f_2)$.
Consider points $p=(p_x,p_y,p_z)\in \crc_1$ and $q=(q_x,q_y,q_z)\in \crc_2$ at a distance of $\delta\in \RR$ from each other.
Setting $\Delta=\delta^2$ leads to
\begin{align*}
f_1(p_x,p_y,p_z) &= 0, \\
f_2(q_x,q_y,q_z) &=0, \\
(p_x-q_x)^2+(p_y-q_y)^2&+(p_z-q_z)^2 = \Delta.
\end{align*}

Considering $p_x,p_y,p_z,q_x,q_y,q_z,\Delta$ as variables, the above system defines a variety $U\in\RR^7$ of complexity at most four.
At most two points $p\in \crc_1$ are contained in the axis of $\crc_2$.
For every other point $p\in \crc_1$ and squared distance $\Delta$, there are at most two values of $q$ that satisfy the above system.
This implies that $\dim U\le 2$.
Since $U$ is the union of infinitely many disjoint one-dimensional varieties (one for every $p\in C_1$), we conclude that $\dim U=2$.

Let $\pi:\RR^7 \to \RR^3$ be the projection defined as
\[ \pi(p_x,p_y,p_z,q_x,q_y,q_z,\Delta) = (p_x,q_x,\Delta). \]

Set $U_3 = \pi(U)$.
By Lemma \ref{le:projection}, this is a semi-algebraic set of dimension at most two and of complexity $O(1)$.
Since $\crc_1$ is a circle in the $xy$-plane, no three points on $\crc_1$ have the same $x$-coordinate.
The same holds for $\crc_2$, since it is a circle not contained in a plane parallel to the $yz$-plane.
Thus, the preimage of every point of $\pi(U)$ consists of at most four points of $U$.
This implies that $\dim U_3 =2$.

Consider the set
\[ Z = \left\{(\gamma_{1,x}(s),\gamma_{2,x}(t),\rho(s,t))\ : \ s,t\in \RR \right\}.\]
Note that $Z$ is equivalent to $U_3$ up to the two points that are not covered by the parametrizations $\gamma_1(s)$ and $\gamma_2(t)$.
These two missing points yield a one-dimensional constant-complexity semi-algebraic set $Z_0\subset U_3$ such that $U_3 = Z \cup Z_0$.

Let $F\in \RR[x,y,z]$ be a constant-degree polynomial satisfying $\overline{U_3} = \vb(F)$.
We claim that no first partial derivative of $F$ is identically zero.
Indeed, from the definition of $Z$ we note that $F$ must depend on all three coordinates.
We may thus apply Theorem \ref{th:RSdZ} with $F$.
We partition the remainder of the proof of Lemma \ref{le:SpecialorDist} according to the case of the theorem that holds.

\parag{The quadruples case.} First assume that the first case of Theorem \ref{th:RSdZ} holds (the case involving the number of quadruples).
Define
\[ Q = \big\{(a,a',b,'b)\in \pts_1^2 \times \pts_2^2\ :\ \text{ exists } c\in \RR \text{ such that }  F(a,b,c)=F(a',b',c)=0 \big\}. \]
By Theorem \ref{th:RSdZ}, we have that
\[ |Q| = O\left(m^{4/3}n^{4/3}+m^2+n^2\right).\]

For $\delta\in \RR$, set $m_\delta=|\{(a,b)\in\pts_1\times\pts_2\ :\ |ab|=\delta \}|$.
In other words, $m_\delta$ is the number of pairs in $\pts_1\times\pts_2$ at distance $\delta$.
Since every pair of $\pts_1\times\pts_2$ contributes to one $m_\delta$, we get that $\sum_\delta m_\delta = \Theta(mn)$.
The number of quadruples in $Q$ that satisfy $|ab|=|a'b'|=\delta$ is $m_\delta^2$.
Combining this observation with the Cauchy--Schwarz inequality gives us
\[ |Q| = \sum_{\delta}m_\delta^2 \ge \frac{(\sum_\delta m_\delta)^2}{D(\pts_1,\pts_2)} = \Theta\left(\frac{m^2n^2}{D(\pts_1,\pts_2)}\right). \]

 Combining the two above bounds for $|Q|$ gives
 \[ \frac{m^2n^2}{D(\pts_1,\pts_2)} = O\left(m^{4/3}n^{4/3}+m^2+n^2\right), \quad \text{ or } \quad D(\pts_1,\pts_2)=\Omega\left(\min\left\{m^{2/3}n^{2/3},m^2,n^2\right\}\right). \]
This completes the first case of the proof of Lemma \ref{le:SpecialorDist}.

\parag{The case where $F$ has a special form.} We now consider the second case of Theorem \ref{th:RSdZ} (the case stating that $F$ has a special form in an open neighborhood of most points).
As in the statement of the theorem, the exceptional set $Z_0^*$ is a variety of dimension one and complexity $O(1)$.
We consider a point $v=(x_0,y_0,\Delta_0)\in Z\setminus (Z_0 \cup Z^*)$.
Then for $j\in \{1,2,3\}$ there exist an open interval $I_j\subset \RR$ and a real-analytic function $\psi_j: I_j \to \RR$ with an analytic inverse that satisfy the following.
We have that $(x_0,y_0,\Delta_0)\in I_1\times I_2\times I_3$. Every $(x,y,\Delta)\in I_1\times I_2\times I_3$ satisfies
\begin{equation} \label{eq:SpecialForm}
(x, y, \Delta) \in \vb(F) \quad \text{ if and only if } \quad \psi_1(x) + \psi_2(y) + \psi_3(\Delta) = 0.
\end{equation}

Set $\psi'_1(x) = -\psi_1(x)$ and $\psi'_2(y) = -\psi_2(y)$.
We rewrite \eqref{eq:SpecialForm} as
\[(x, y, \Delta) \in \vb(F) \quad \text{ if and only if } \quad  \Delta = \psi_3^{-1}\left(\psi'_1(x) + \psi'_2(y)\right). \]

Recall that $\rho(s,t)=\Delta$, where we use the point parametrizations $\gamma_1(s)$ and $\gamma_2(t)$.
Thus, there exist open intervals $S,T\subset\RR$, such that for every $s\in S$ and $t\in T$ we have
\[\rho(s,t)=\Delta \quad  \text{ if and only if } \quad  \Delta = \psi_3^{-1}\big(\psi'_1(\gamma_{1,x}(s)) + \psi'_2(\gamma_{2,x}(t))\big). \]
That is, there exists an open neighborhood where
\[ \rho(s,t)=\psi_3^{-1}\big(\psi'_1(\gamma_{1,x}(s)) + \psi'_2(\gamma_{2,x}(t))\big). \]

Since $\gamma_{1,x}(s)$ and $\gamma_{2,x}(t)$ are rational functions in $t$ and $s$, they are analytic. 
By inspecting the definitions in \eqref{eq:FirstCircParam} and \eqref{eq:SecondCircParam}, we note that their inverses are solutions to quadratic equations. 
Thus, in a sufficiently small open neighborhood, we may remove the $\pm$ from the quadratic formula, to obtain analytic inverses.
This in turn implies that $\psi'_1 \circ \gamma_{1,x}(s)$ and $\psi'_2 \circ \gamma_{2,x}(t)$ are analytic with analytic inverses.
Setting $\varphi_1 = \psi_3^{-1}$, $\varphi_2 = \psi'_1 \circ \gamma_{1,x}(s)$, and $\varphi_3 = \psi'_2 \circ \gamma_{2,x}(t)$ concludes the second case of Lemma 5.1.
\end{proof}

To complete the proof of Theorem \ref{th:CircDD}(b), it remains to show that we cannot be in the second case of Lemma \ref{le:SpecialorDist}.
For this, we rely on the following \emph{derivative test} (see for example \cite{deZeeuw18}).
For a function $f$, we write $f_x= \frac{\partial f}{\partial x}$.

\begin{lemma} \label{le:DerivTest}
Let $f\in \RR[x,y]$ be twice differentiable with $f_y\not\equiv 0$.
Let $N$ be an open neighborhood in $\RR^2$.
If there are differentiable $\varphi_1,\varphi_2,\varphi_3 \in \RR[z]$ satisfying
\[ f(x,y) = \varphi_1(\varphi_2(x)+\varphi_3(y)) \ \text{ for every } (x,y)\in N  \]
then
\[ \frac{\partial^2(\log |f_x/f_y|)}{\partial x \partial y} \quad \text{is identically zero in $N$.} \]
\end{lemma}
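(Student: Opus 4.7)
The plan is a direct computation via the chain rule, exploiting the special additive-separable structure inside $\varphi_1$. Writing $w = \varphi_2(x)+\varphi_3(y)$, differentiating $f(x,y)=\varphi_1(w)$ separately in $x$ and $y$ gives
\[ f_x = \varphi_1'(w)\,\varphi_2'(x), \qquad f_y = \varphi_1'(w)\,\varphi_3'(y). \]
The only factor that couples $x$ and $y$, namely $\varphi_1'(w)$, appears in both numerator and denominator, so
\[ \frac{f_x}{f_y} = \frac{\varphi_2'(x)}{\varphi_3'(y)}. \]

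The right-hand side is a quotient of a function of $x$ alone by a function of $y$ alone. Taking logarithms of absolute values turns the quotient into a difference:
\[ \log\left|\frac{f_x}{f_y}\right| \;=\; \log|\varphi_2'(x)| \;-\; \log|\varphi_3'(y)|. \]
Differentiating once in $x$ kills the $y$-term and leaves a function of $x$ alone, namely $(\log|\varphi_2'(x)|)'$. Then applying $\partial/\partial y$ annihilates it. Hence $\partial^2(\log|f_x/f_y|)/(\partial x\,\partial y)\equiv 0$ on the open set where the expression is defined.

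The only subtlety is ensuring that the logarithm and the ratio $f_x/f_y$ are meaningful, i.e.\ that none of $\varphi_1'(w)$, $\varphi_2'(x)$, $\varphi_3'(y)$ vanishes. Since $f_y\not\equiv 0$ on $N$, the product $\varphi_1'(w)\varphi_3'(y)$ is not identically zero, and by analyticity (or continuity) of the $\varphi_j$ and their derivatives, their zero sets are nowhere dense. We may therefore pass to a smaller open subset of $N$ on which all three derivatives are nonzero, and the above computation is valid there; the conclusion ``identically zero'' is meant in this sense. Beyond this mild shrinking, the argument is a one-line chain-rule calculation, so there is no real obstacle.
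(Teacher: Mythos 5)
Your proposal is correct and takes essentially the same route as the paper: apply the chain rule to get $f_x=\varphi_1'(w)\varphi_2'(x)$ and $f_y=\varphi_1'(w)\varphi_3'(y)$, cancel the coupling factor $\varphi_1'(w)$ in the ratio, split the log of the quotient into a difference, and observe that the mixed partial annihilates it. The extra paragraph about shrinking $N$ to avoid zeros of $\varphi_1',\varphi_2',\varphi_3'$ is a reasonable clarification of a point the paper leaves implicit, but it is not a different argument.
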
 
\begin{proof}
Assume that there exist $\varphi_1,\varphi_2,\varphi_3 \in \RR[z]$ as stated in the lemma.
Then
\begin{align*}
f_x(x,y) = \frac{\partial \varphi_1}{\partial x}(x,y) \cdot \frac{\partial \varphi_2}{\partial x}(x) \quad \text{ and } \quad
f_y(x,y) = \frac{\partial \varphi_1}{\partial y}(x,y) \cdot \frac{\partial \varphi_3}{\partial y}(y). 
\end{align*}

Combining the above gives
\[\log\left|f_x / f_y\right|  = \log\left|\frac{\partial \varphi_2}{\partial x}(x)/\frac{\partial \varphi_3}{\partial y}(y)\right| = \log\left|\frac{\partial \varphi_2}{\partial x}(x)\right|-\log \left|\frac{\partial \varphi_3}{\partial y}(y)\right|.\]
Differentiating with respect to both $x$ and $y$ gives 0, which completes the proof. 
\end{proof}

Recall that our goal is to show that the second case of Lemma \ref{le:SpecialorDist} cannot happen. 
By Lemma \ref{le:DerivTest}, it suffices to show that 
\begin{equation}\label{eq:derivative_test}
    g(s,t) = \frac{\partial^2(\log |\rho_t/\rho_s|)}{\partial s \partial t}
\end{equation}
is not identically zero in any open neighborhood of $\RR^2$. 
Since $\rho(s,t)$ is a rational function in $s$ and $t$, so is $g(s,t)$. 
Thus, the $g(s,t)$ is identically zero everywhere if and only if the numerator is identically zero everywhere. That is, if every coefficient in the numerator is 0. 

Since \eqref{eq:derivative_test} includes a logarithm and an absolute value, how can we say that $g(s,t)$ is rational?
To see that, we set $h(s, t) = \rho_t/\rho_s$. 
Since $\rho(s, t)$ is a rational function in $s$ and $t$, so is $h(s, t)$.
For each point $(s, t)$ that satisifies $\rho_s(s,t)\neq 0$ and $\rho_t(s,t)\neq 0$, there exists an open neighbourhood where $|h(s, t)| = c \cdot h(s, t)$, where $c\in \{-1,1\}$. In either case, we have that %
\[ \frac{\partial}{\partial t} \log |h(s, t)| = \frac{1}{c\cdot h(s, t)} \cdot \frac{\partial}{\partial t} (c \cdot h(s, t)) = \frac{h_t(s, t)}{h(s, t)}. \]
This implies that, at any point where both $\rho_t$ and  $\rho_s$ are nonzero, we have that $g(s, t) = \frac{\partial}{\partial s} \left(\frac{h_t(s, t)}{h(s, t)}\right)$.
This is indeed a rational function.

Since $\rho(s, t)$ depends on parameters $p, q, r, \alpha$ and $\beta$, the expression $g(s,t)$ is too large to compute by hand. 
Instead, we used Mathematica \cite{Wolfram} to compute it. 
Our code can be found in Appendix \ref{app:code}.
Since $g(s, t)$ is a rational function, it suffices to determine the parameters $p, q, r, \alpha$ and $\beta$ for which the coefficients of some of the monomials in the numerator do not simultaneously vanish.
We do so in Appendix \ref{app:analysis}.
In the same appendix, we also address the special cases that were ignored in the above analysis.
\end{proof}

\appendix

\section{The Mathematica code} \label{app:code}

In this appendix, we describe the Mathematica program that was used in the proof of Theorem \ref{th:CircDD}(b).
Listing \ref{list:code} contains the code that is used for the general case of the proof.
Lines 1--7 define the parametrizations described in \eqref{eq:FirstCircParam} and \eqref{eq:SecondCircParam}.
Lines 8--9 define the function $\rho(s,t)$.
Line 10 is the derivative test of Lemma \ref{le:DerivTest}.
Finally, line 11 shows the coefficient of a specific term.

\begin{lstlisting}[language=Mathematica,caption={The Mathematica code.\label{list:code}}] 
v1 = {Cos[$\displaystyle \alpha$], 0, Sin[$\displaystyle \alpha$]};
v2 = {Cos[$\displaystyle \beta$] Sin[$\displaystyle \alpha$]$\displaystyle^2$, Sin[$\displaystyle \beta$], -Cos[$\displaystyle \alpha$] Cos[$\displaystyle \beta$] Sin[$\displaystyle \alpha$]};
v2norm = Sqrt[(v2[[1]]$\displaystyle ^\mathsf{2}$ + v2[[2]]$\displaystyle ^\mathsf{2}$ + v2[[3]]$\displaystyle ^\mathsf{2}$)];
c1 = {2 s/(1 + s$\displaystyle ^\mathsf{2}$), (1 - s$\displaystyle ^\mathsf{2}$)/(1 + s$\displaystyle ^\mathsf{2}$), 0};
c2 = {p + r (1 - t$\displaystyle ^\mathsf{2}$) v1[[1]]/(1 + t$\displaystyle ^\mathsf{2}$) + 2 r t v2[[1]]/((1 + t$\displaystyle ^\mathsf{2}$) v2norm),
      r (1 - t$\displaystyle ^\mathsf{2}$) v1[[2]]/(1 + t$\displaystyle ^\mathsf{2}$) + 2 r t v2[[2]]/((1 + t$\displaystyle ^\mathsf{2}$) v2norm),
      q + r (1 - t$\displaystyle ^\mathsf{2}$) v1[[3]]/(1 + t$\displaystyle ^\mathsf{2}$) + 2 r t v2[[3]]/((1 + t$\displaystyle ^\mathsf{2}$) v2norm)};
dist[s_, t_] :=
    Together[Expand[(c1[[1]] - c2[[1]])$\displaystyle ^\mathsf{2}$ + (c1[[2]] -c2[[2]])$\displaystyle ^\mathsf{2}$ + (c1[[3]] - c2[[3]])$\displaystyle ^\mathsf{2}$]];
RhoTest = Numerator[Together[D[D[Log[D[dist[s, t], s]/D[dist[s, t], t]], s], t]]];
Coefficient[RhoTest, s$\displaystyle ^\mathsf{5}$ t] // Factor
  \end{lstlisting}

The code of Listing \ref{list:code} leads to expressions somewhat more involved than the ones stated in Section \ref{sec:ProofB}.
For example, the coefficient of $s^5t$ produced by Mathematica is
\begin{align*}
96 r (p + r \cos\alpha) \cos\beta \sin^2\alpha (-r &+
   p \cos\alpha + r \cos^2\alpha + q \sin\alpha +
   r \sin^2 \alpha) \\
   &\cdot \sin\beta (\cos^2\alpha \cos^2\beta
\sin^2\alpha + \cos^2\beta \sin^4\alpha + \sin^2\beta)
\end{align*}

The above expression can be simplified by noting that
\begin{align*}
-r + p \cos\alpha + r \cos^2\alpha + q \sin\alpha + r \sin^2 \alpha &= p \cos\alpha + q \sin\alpha, \text{ and }\\[2mm]
\cos^2\alpha \cos^2\beta \sin^2\alpha + \cos^2\beta \sin^4\alpha + \sin^2\beta &= \cos^2\beta \sin^2\alpha + \sin^2\beta.
\end{align*}

Looking at Lemma \ref{le:DerivTest}, it may seem as if we forgot to include the absolute value in Listing \ref{list:code}.
For some $f\in \RR[x]$, consider
\[ \frac{\partial}{\partial x} \log |f(x)| = \begin{cases}
\frac{\partial}{\partial x} \log f(x) \qquad & \text{ if } f(x)> 0, \\
\frac{\partial}{\partial x} \log \left(-f(x)\right) \qquad & \text{ if } f(x)< 0.
\end{cases}
\]
In either case the derivative is $f'(x)/f(x)$.
We may thus ignore the absolute value in Lemma \ref{le:DerivTest}.

The special cases at the end of Section \ref{sec:ProofB} require minor changes in the code from Listing \ref{list:code}.
For example, the case of $\cos \beta=0$ is obtained by changing lines 2--3 to $v_2=(0,1,0)$.
The case of $\crc_2$ being centered at the origin is obtained by removing ``p+'' for line 5 and ``q+'' from line 7.

\section{Analysis of the derivative test} \label{app:analysis}

In this appendix, we complete the proof of Theorem \ref{th:CircDD}(b) by showing that $g(s, t)$ as defined in \eqref{eq:derivative_test} cannot vanish everywhere, and also address the special cases. To do so, we note that $g(s, t)$ is a rational function, and it suffices to show that there exist monomials in the numerator whose coefficients cannot simultaneously vanish.
We consider several such monomials:

\begingroup\abovedisplayskip=4pt \belowdisplayskip=4pt

\begin{itemize}[noitemsep,topsep=1pt]
\item The coefficient of $s^5t$ is 
\[ 96 r (p + r \cos\alpha) \cos\beta \sin^2\alpha (p \cos\alpha + q \sin\alpha) \sin\beta (\cos^2\beta \sin^2\alpha + \sin^2\beta).\]
\item The coefficient of $s^5t^9$ is 
\[ 96 r (p - r \cos\alpha) \cos\beta \sin^2 \alpha (p \cos\alpha + q \sin\alpha) \sin\beta (\cos^2\beta \sin^2\alpha + \sin^2\beta). \]
\item The coefficient of $s^3t^9$ is
\begin{align*} 
64 r &(p - r \cos\alpha) \cos\beta \sin\alpha \sin\beta (\cos^2\beta \sin^2\alpha + \sin^2\beta) \\[2mm]
&\hspace{60mm} \cdot (-2 q \cos^2\alpha + p \cos\alpha\sin\alpha - q \sin^2\alpha ).
\end{align*}
\item The coefficient of $s^3t$ is
\begin{align*}  
64 r &(p +   r \cos\alpha) \cos\beta \sin\alpha \sin\beta (\cos^2\beta \sin^2\alpha + \sin^2\beta)\\[2mm]
&\hspace{60mm} \cdot (-2 q \cos^2\alpha + p \cos\alpha\sin\alpha - q \sin^2\alpha ) .
\end{align*}
\end{itemize}

\endgroup

By definition, $r\neq 0$. Recall that $H$ (the plane containing $C_2$) is not parallel to the $xy$-plane or the $xz$-plane.
Since we also assumed $H$ does not contain lines parallel to the $x$-axis, we have that  $\sin\alpha \neq 0$ and $\sin\beta \neq 0$. Indeed, this is easy to see when recalling that $V'_1$ and $V'_2$ from \eqref{eq:V1V2} span the directions of $H$.
This implies that $(\cos^2\beta \sin^2\alpha + \sin^2\beta)\neq 0$.
Since we assume that $H$ is not parallel to the $xz$-plane, we have that $\sin\beta\neq 0$.
We now also assume that the center of $\crc_2$ is not the origin, that $\cos \alpha \neq 0$, and that $\cos \beta \neq 0$.
These special cases are  addressed after completing the general cases. 

Since $\cos \alpha \neq 0$, it is not possible for $(p - r \cos\alpha)$ and $(p + r \cos\alpha)$ to be zero simultaneously.
Thus, the only way for all of the four above coefficients to equal zero is to have both 
\[ p \cos\alpha + q \sin\alpha=0 \quad \text{ and } \quad  -2 q \cos^2\alpha + p \cos\alpha\sin\alpha - q \sin^2\alpha=0. \]

We rearrange the first equation as $p \cos\alpha = -q \sin\alpha$. 
Plugging this into the second equation gives
\[ 0 = -2 q \cos^2\alpha + p \cos\alpha\sin\alpha - q \sin^2\alpha = -2 q \cos^2\alpha  - 2q \sin^2\alpha = -2q. \]
That is, $q=0$. 
We then have $p \cos\alpha = -q \sin\alpha =0$, or $p=0$.
This contradicts the assumption that $C_2$ is not centered at the origin.

By the above, it is impossible for the four above coefficients to be zero simultaneously. 
This implies that $g(s,t)$ is not identically zero in any open neighborhood $N\subset\RR^2$.
By Lemma \ref{le:DerivTest}, we get that $\rho(s,t)$ cannot be rewritten as $\varphi_1(\varphi_2(s)+\varphi_3(t))$ for every $(x,y)\in N$.
Then, Lemma \ref{le:SpecialorDist} implies the assertion of the theorem. 

\parag{The special cases.} 
In the above proof, we assume: \begin{itemize}[noitemsep,topsep=1pt]
    \item The plane $H$ is not parallel to the $xy$-plane, to the $xz$-plane, or to the $yz$-plane.
    \item $H$ does not contain lines parallel to the $x$-axis.
    \item The center of $\crc_2$ is not the origin.
    \item $\cos\beta \neq 0$.
    \item $\cos\alpha \neq 0$.
\end{itemize}
We now address each of these special cases, in the above order.

We first consider the case where $H$ is parallel to the $xy$-plane.
As discussed in the introduction, if $H$ is the $xy$-plane and $\crc_1$ and $\crc_2$ are not concentric, then $D(\pts_1,\pts_2) = \Omega\left(\min\left\{m^{2/3}n^{2/3},m^2,n^2\right\}\right)$.
When $H$ is parallel to the $xy$-plane, we can combine the above with Lemma \ref{le:parallelPlanes}, to obtain the following. If $\crc_1$ and $\crc_2$ are not aligned then $D(\pts_1,\pts_2) = \Omega\left(\min\left\{m^{2/3}n^{2/3},m^2,n^2\right\}\right)$.

Next consider the case where $H$ is parallel to the $xz$-plane. 
In this case, we can rewrite $V_1 = (1,0,0)$ and $V_2 = (0,0,1)$, which significantly simplifies \eqref{eq:SecondCircParam}.
The Mathematica program then implies that the numerator of $g(s,t)$ is $4 q (1 + t^2) (-1 + s^2)$.
This expression is not identically zero unless $q=0$.
If $q=0$ then the center of $\crc_2$ is on the $x$-axis and $H$ contains the origin.
That is, if $q=0$ then $\crc_1$ and $\crc_2$ are perpendicular.
We conclude that either $D(\pts_1,\pts_2) = \Omega\left(\min\left\{m^{2/3}n^{2/3},m^2,n^2\right\}\right)$ or the circles are perpendicular.

We move to the case where $H$ is parallel to the $yz$-plane.
We rewrite $V_1 = (0,1,0)$ and $V_2 = (0,0,1)$.
In this case, the coefficient of $s^2$ in the numerator of $g(s,t)$ is $4pr$.
The coefficient of $s^3t^4$ is $16q(3p^2-8r^2)$.
For both of these coefficients to be zero, we must have $p=q=0$, implying that the two circles are perpendicular. 
Once again, either $D(\pts_1,\pts_2) = \Omega\left(\min\left\{m^{2/3}n^{2/3},m^2,n^2\right\}\right)$ or the circles are perpendicular.

We next assume that $H$ contains lines parallel to the $x$-axis.
In this case, we can write $V_1 = (1,0,0)$ and $V_2=(0,\cos\gamma,\sin\gamma)$.
Since $H$ is not parallel to the $xy$-plane and $xz$-plane, we have that $\sin \gamma \neq 0$ and $\cos \gamma \neq 0$.
We consider the following coefficients of $g(s,t)$:
\begin{itemize}[noitemsep,topsep=1pt]
\item The coefficient of $s$ (with no $t$) is $-8p  (p+r)^2 \cos \gamma$.
\item The coefficient of $st^6$ is $-8p(p-r)^2 \cos\gamma$. 
\item The coefficient of $s^3 t^5$ is $-128q(p-r)r\cos\gamma \sin \gamma$.
\end{itemize}

Since $p+r$ and $p-r$ cannot simultaneously be zero, the first two coefficients imply that $p=0$.
Since $r>0$, we get that $p-r\neq 0$.
Then, the third coefficient implies that $q=0$.
That is, we are in the special case where $V_1 = (1,0,0)$, $V_2=(0,\cos\gamma,\sin\gamma)$, and $p=q=0$. In this case, the coefficient of $s^6t^2$ is $-4 \cos \gamma\sin^2\gamma$.
This coefficient is never zero, which completes the case of lines parallel to the $x$-axis.

Note that $\sin \alpha \neq 0$, since otherwise $H$ is parallel to the $xy$-plane. 
Similarly, $\sin\beta\neq 0$, since otherwise $H$ is parallel to the $xz$-plane.
We next consider the case where $\crc_2$ is centered at the origin.
That is, we set $p=q=0$.
In this case the coefficient of $t^3$ (with no $s$ factor) in the numerator of $g(s,t)$ is 
\[  -32 \cos^2 \alpha \cos \beta \sin^2 \alpha \sin\beta (\cos^2\beta \sin^2\alpha + \sin^2\beta). \]

For this expression to be zero, we must have either $\cos \alpha=0$ or $\cos \beta=0$.
If $\cos\alpha=0$, then $H$ is perpendicular to the $xy$-plane, so the two circles are perpendicular. 
If $\cos\beta=0$, then $V_2=(0,1,0)$.
Running the program once again gives that the coefficient of $s^6 t^6$ is $-8 \cos\alpha\sin^2\alpha$.
For this coefficient to vanish, we again require $\cos\alpha=0$, so the circles are again perpendicular.

Consider the case where $\cos \beta=0$. 
In this case $\sin\beta =1$, so $V_2 = (0,1,0)$.
As before, we run the Mathematica program to find coefficients in the numerator of $g(s,t)$.
The coefficient of $s^5t^3$ is $-128 p r \cos\alpha$.
For this coefficient to vanish, we require either $p=0$ or $\cos\alpha=0$.
Consider the case where $\cos\alpha=0$.
In this case, the coefficient of $t^{10}$ (and no factor of $s$) is $-4pr \sin^2\alpha$.
Since $\sin\alpha =1$, we get that $p=0$.
That is, in either case we have that $p=0$.

We continue the case where $\cos \beta=0$ and $p=0$.
We may assume that $q\neq 0$, since we already handled the case where the center of $\crc_2$ is the origin.
The coefficient of $t^9$ is $-16 r^2q \cos\alpha \sin\alpha$.
This coefficient vanishes if and only if $\cos \alpha=0$. 
In this case $V_1=(0,0,1)$ and running the program again leads to the numerator being $8 q (1 + t^2) s$.
Since $q\neq 0$, this numerator does not vanish identically. 

Finally, we assume that $\cos \alpha=0$.
In this case, $V_1=(0,0,1)$, so $H$ is perpendicular to the $xy$-plane.
We may assume that $\cos\beta\neq 0$, since this is the case where $H$ is parallel to the $yz$-plane.
As usual, we consider coefficients of terms in the numerator of $g(s,t)$.
The coefficient of $s^3t^3$ is $-384 p q r \cos\beta \sin\beta$.
For this coefficient to vanish, we require either $p=0$ or $q=0$.
When $p=0$, the numerator becomes $4 q (1 + t^2) (-\cos\beta + s^2 \cos\beta + 2 s \sin\alpha)$.
When $q=0$, the numerator becomes $-4 p r (-1 + t^2) (1 + s^2) \sin\beta$.
In either case, the numerators is zero if and only if $p=q=0$, which is a case we already handled.

\begin{thebibliography}{99}
%
\bibitem{ACNS82}
M.\ Ajtai, V.\ Chv\'atal, M.\,M.\ Newborn, and E.\ Szemer\'edi,
Crossing-free subgraphs,
\emph{Annals Discrete Math.}
{\bf 12} (1982), 9--12.
%
\bibitem{BarBas12}
S.\ Barone, and S.\ Basu, Refined bounds on the number of connected components of sign conditions on a variety, \emph{Discrete Comput.\ Geom.} {\bf 47} (2012), 577--597.

%
\bibitem{BPR06}
S.\ Basu, R.\ Pollack, and M. F. Roy,  \emph{Algorithms in real algebraic geometry}, Springer, 2006.
%
\bibitem{BCR98}
J.\ Bochnak, M.\ Coste, and M.\ Roy,
\emph{Real Algebraic Geometry}, Springer-Verlag, Berlin, 1998.
%
\bibitem{BGT11}
E.\ Breuillard, B.\ Green, and T.\ Tao, Approximate subgroups of linear groups, \emph{Geom.\ Funct.\ Anal.
} {\bf 21} (2011), 774.
%
\bibitem{Chung84}
F.\ Chung, 
The number of different distances determined by n points in the plane,
\emph{J.\ Comb.\ Theory Ser.\ A.} {\bf 36} (1984), 342--354.
%
\bibitem{Elekes95}
G.\ Elekes, 
Circle grids and bipartite graphs of distances, \emph{Combinatorica} {\bf 15} (1995), 167--174.
%
\bibitem{Erd96}
P.\ Erd\H os, On some of my favourite theorems, Combinatorics, Paul Erd\H os is
Eighty, Vol. 2 (D. Mikl\'os et al., eds.), Bolyai Society Mathematical Studies 2,
Budapest, 1996, 97--132.
%
\bibitem{erd46}
P.\ Erd\H os,
On sets of distances of $n$ points,
\emph{Amer. Math. Monthly} {\bf 53} (1946), 248--250.
%
\bibitem{GK15}	
L.\ Guth and N.H.\ Katz,
On the Erd{\H o}s distinct distances problem in the plane,
{\em Annals Math.} {\bf 181} (2015), 155--190.
%
\bibitem{Harris92}
J.\ Harris,
\emph{Algebraic geometry: a first course},
Springer, New York, 1992.
%
\bibitem{Kollar15}
J.\ Koll\'ar,
Szemer\'edi–Trotter-type theorems in dimension 3,
\emph{Advances Math.} {\bf 271} (2015), 30--61.
%
\bibitem{Lei83}
F.\ T.\ Leighton,
\emph{Complexity issues in VLSI: optimal layouts for the shuffle-exchange graph and other networks},
MIT press, 1983.
%
\bibitem{Lenz55}
H.\ Lenz, Zur Zerlegung von Punktmengen in solche kleineren Durchmessers, 
\emph{Arch.\ Math.} {\bf 5} (1955), 413--416.
%
\bibitem{Wolfram}
Wolfram Research, Inc., Mathematica, Version 12.0, Champaign, IL (2019).
%
\bibitem{PdZ13}
J.\ Pach and F.\ de Zeeuw,
Distinct distances on algebraic curves in the plane,
\emph{Comb.\ Probab.\ Comp} {\bf 26} (2017): 99--117.
%
\bibitem{Raz20}
O.\ E.\ Raz,
A note on distinct distances,
\emph{Combinat.\ Probab.\ Comput.}, to appear.
%
\bibitem{RSdZ16}
O.\ E.\ Raz, M.\ Sharir, and F.\ De Zeeuw,
Polynomials vanishing on Cartesian products: The Elekes--Szab\'o theorem revisited,
\emph{Duke Math.\ J.
} {\bf 165} (2016): 3517--3566.
%
\bibitem{SSS13}
M.\ Sharir, A.\ Sheffer, and J.\ Solymosi,
Distinct distances on two lines,
\emph{J.\  Comb.\ Theory Ser.\ A.} {\bf 120} (2013), 1732--1736.
%
\bibitem{SSZ15}
M.\ Sharir, A.\ Sheffer, and J.\ Zahl,
Improved bounds for incidences between points and circles,
\emph{Combinat.\ Probab.\ Comput.} {\bf 24} (2015), 490--520.
%
\bibitem{ShaSo17}
M.\ Sharir and N.\ Solomon,
Incidences with curves and surfaces in three dimensions, with applications to distinct and repeated distances, 
\emph{Proceedings of the Twenty-Eighth Annual ACM-SIAM Symposium on Discrete Algorithms} (SODA), Society for Industrial and Applied Mathematics, 2017.
%
\bibitem{ShaSo16}
M.\ Sharir and N.\ Solomon, 
Distinct and Repeated Distances on a Surface and Incidences between points and spheres,
arXiv:1604.01502.
%
\bibitem{SZ17}
M.\ Sharir and J.\ Zahl,
Cutting algebraic curves into pseudo-segments and applications,
\emph{J.\  Comb.\ Theory Ser.\ A.} {\bf 150} (2017), 1--35.
%
\bibitem{SolyTao12}
J.\ Solymosi and T.\ Tao, An incidence theorem in higher dimensions, \emph{Discrete 
Comput.\ Geom.} {\bf 48} (2012), 255--280.
%
\bibitem{SolyToth01}
J.\ Solymosi and C.\ D.\ T\'oth, Distinct distances in the plane, \emph{Discrete Comput.\ Geom.} {\bf 25} (2001), 629--634.
%
\bibitem{Szek97}
L.\ Sz\'ekely, Crossing numbers and hard Erd\H os problems in discrete geometry, \emph{Combinat.\ Probab.\ Comput.} {\bf 6} (1997), 353--358.
%
\bibitem{Tao11}
T.\ Tao, Lines in the Euclidean group SE(2), blog post,
\url{https://terrytao.wordpress.com/2011/03/05/lines-in-the-euclidean-group-se2/}
%
\bibitem{Tardos03}
G.\ Tardos, On distinct sums and distinct distances, \emph{Advances Math.} {\bf 180} (2003),
275--289.

%
\bibitem{deZeeuw18}
F.\ de Zeeuw,
A survey of Elekes-R\'onyai-type problems,
In \emph{New Trends in Intuitive Geometry}, Springer, Berlin, Heidelberg, 2018, 95--124.
%
\end{thebibliography}
\end{document}